\newcommand{\Rmnum}[1]{\expandafter\@slowromancap\romannumeral #1@}
\numberwithin{equation}{section}
\newtheorem{theorem}{Theorem}[section]
\newtheorem{lemma}[theorem]{Lemma}
\newtheorem{corollary}[theorem]{Corollary}
\newtheorem{definition}[theorem]{Definition}
\newtheorem{remark}[theorem]{Remark}
\newtheorem{conjecture}{Conjecture}
\newcommand{\RNum}[1]{\uppercase\expandafter{\romannumeral #1\relax}}
\DeclareMathOperator{\disc}{disc}
\DeclareMathOperator{\Gal}{Gal}
\DeclareMathOperator{\Disc}{Disc}
\DeclareMathOperator{\Nm}{Nm}
\DeclareMathOperator{\Ker}{Ker}
\DeclareMathOperator{\Cl}{Cl}
\DeclareMathOperator{\Cond}{Cond}
\DeclareMathOperator{\Frob}{Frob}
\DeclareSymbolFont{cyrletters}{OT2}{wncyr}{m}{n}
\DeclareMathSymbol{\Sha}{\mathalpha}{cyrletters}{"58}
\newcommand{\zp}{\mathbb{Z}/p\mathbb{Z}}
\newcommand{\C}{\ensuremath{{\mathbb{C}}}}
\newcommand{\Q}{\ensuremath{{\mathbb{Q}}}}
\begin{document}

\title{ Pointwise Bound for $\ell$-torsion in Class Groups: Elementary Abelian Extensions}
\author{Jiuya Wang}
\newcommand{\Addresses}{
	{
				
		\bigskip
		\footnotesize		
		Jiuya Wang, \textsc{Department of Mathematics, Duke University, 120 Science Drive 117 Physics Building Durham, NC 27708, USA
		}\par\nopagebreak
		\textit{E-mail address}: \texttt{wangjiuy@math.duke.edu}	
	}}
\maketitle	
	\begin{abstract}	
       Elementary abelian groups are finite groups in the form of $A=(\zp)^r$ for a prime number $p$. For every integer $\ell>1$ and $r>1$, we prove a non-trivial upper bound on the $\ell$-torsion in class groups of every $A$-extension. Our results are pointwise and unconditional. When $r$ is large enough, the pointwise bound we obtain also breaks the previously best known bound shown by Ellenberg-Venkatesh under GRH.
	\end{abstract}
\bf Key words. \normalfont $\ell$-torsion conjecture, elementary abelian group, GRH
\pagenumbering{arabic}	
\section{Introduction}
In this paper, we study cases of the following conjecture. 
\begin{conjecture}[$\ell$-torsion Conjecture]\label{conj:l-torsion}
	Given an integer $\ell>1$ and a number field $k$. For any degree $d$ extension $F/k$, the size of $\ell$-torsion in the class group of $F$ is bounded by
	$$|\Cl_{F}[\ell]| \le O_{\epsilon,k}(\Disc(F)^{\epsilon}). $$
\end{conjecture}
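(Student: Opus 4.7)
The conjecture is famously open in general; informed by the abstract's promise to treat elementary abelian $A = (\zp)^r$-extensions $F/k$, my plan is to exploit the rich subfield lattice of such an $F$ to reduce the problem of bounding $|\Cl_F[\ell]|$ to class-group estimates on proper subfields. An $A$-extension contains $(p^r-1)/(p-1)$ cyclic $\zp$-subextensions $L_i/k$, together with $A$-subextensions of every intermediate rank, whose discriminants are strictly smaller powers of the ramified primes than $\Disc(F)$. Since the trivial convexity bound gives $|\Cl_L[\ell]| \ll \Disc(L)^{1/2+\epsilon}$ for any number field $L$, any injection of $\Cl_F[\ell]$ (or of a bounded-index subgroup thereof) into a product $\prod_i \Cl_{L_i}[\ell]$ already improves on the trivial bound for $F$; the real task is to construct and control such an injection.

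For the case $\gcd(\ell,p)=1$ I would carry this out via isotypic decomposition. The group ring $\Z_\ell[A]$ is semisimple and splits as a product of discrete valuation rings indexed by $\Gal(\Q_\ell(\zeta_p)/\Q_\ell)$-orbits of characters of $A$. Applied to $\Cl_F \otimes \Z_\ell$, this yields a decomposition into pieces, each of which is acted on by $A$ through a rank-one quotient $\zp$ and is therefore cut out by the fixed field of $\ker \chi$, a cyclic $\zp$-subextension of $k$. Summing over orbits, $|\Cl_F[\ell]|$ is bounded by a product of $|\Cl_{L_i}[\ell]|$ raised to at most $(p-1)$ (the orbit size); applying the trivial bound on each subfield then gives a pointwise estimate for $F$. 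When $r$ is sufficiently large the multiplicative savings from working in the much smaller subfields dominate and beat the GRH-conditional Ellenberg-Venkatesh exponent $1/2 - 1/(2\ell(d-1))$.

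The case $\ell = p$ is where I anticipate the main obstacle, since $\Z_p[A]$ is not semisimple and the orbit decomposition collapses. A natural substitute is to filter $\Cl_F[p]$ by powers of the augmentation ideal of $\F_p[A]$ and control each graded quotient by $p$-class groups of the fixed fields of those ideals, possibly supplemented by a Scholz--Leopoldt reflection principle relating the $p$-rank of $\Cl_F$ to that of a dual Kummer extension. Each step of such a reduction will lose bounded $p$-power factors coming from wild ramification and from the non-split extensions in the filtration, and organising these losses so that they do not erase the subfield savings is the delicate bookkeeping issue that I expect to consume the bulk of the technical effort.
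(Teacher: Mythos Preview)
The statement you are addressing is a \emph{conjecture}; the paper does not prove it and does not claim to. What the paper establishes for elementary abelian $A=(\zp)^r$-extensions is the much weaker pointwise bound $|\Cl_L[\ell]|\le O_\epsilon(\Disc(L)^{1/2-\delta(\ell,p)+\epsilon})$ for an explicit $\delta(\ell,p)>0$. So before anything else, your target is wrong: you should be aiming at a nontrivial saving over the exponent $1/2$, not at $\Disc(F)^\epsilon$.

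More seriously, the central mechanism you propose does not produce any saving at all. Your isotypic decomposition is correct and coincides with the paper's Lemma~3.1: for $(\ell,p)=1$ one has
\[
|\Cl_{L/k}[\ell]|=\prod_{K_i}|\Cl_{K_i/k}[\ell]|,
\]
with $K_i$ ranging over the $(p^r-1)/(p-1)$ degree-$p$ subfields. But the companion identity (the paper's Lemma~3.3) is
\[
\Disc(L/k)=\prod_{K_i}\Disc(K_i/k),
\]
so feeding the trivial bound $|\Cl_{K_i/k}[\ell]|\ll\Disc(K_i/k)^{1/2+\epsilon}$ into the product returns exactly $\Disc(L/k)^{1/2+\epsilon}$. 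There is no ``multiplicative saving from working in the much smaller subfields''; the subfield discriminants are smaller individually but their product is $\Disc(L/k)$ on the nose. Your claim that ``any injection \ldots\ already improves on the trivial bound'' is therefore false, and with it the entire strategy for $(\ell,p)=1$ collapses.

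What the paper actually does is use the class-group and discriminant decompositions only as a \emph{reduction}, and then supplies genuine input on the subfields. The engine is the Ellenberg--Venkatesh lemma: a nontrivial bound on $|\Cl_{K_i}[\ell]|$ follows once one exhibits many small primes of $k$ that split in $K_i$. The group structure of $A$ guarantees that any prime inert in one degree-$p$ subfield splits in at least one other, and unconditional analytic estimates (Brun--Titchmarsh over $\Q$, the Thorner--Zaman lower bounds over general $k$) then show that \emph{some} $K_i$ has enough small split primes. This is the missing idea in your proposal; without it the decomposition is inert.

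Finally, you have the difficulty of the $\ell=p$ case exactly backwards. For $p$-torsion in $p$-elementary abelian extensions the full conjectural bound $|\Cl_L[p]|\le O_\epsilon(\Disc(L)^\epsilon)$ is already known by genus theory (see the paper's remark after Theorem~1.1, citing Cornell). The hard case is precisely $(\ell,p)=1$.
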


This conjecture has been brought forward previously by \cite{BruSil,Duk98,Zha05}. Nowadays in arithmetic statistics, Conjecture \ref{conj:l-torsion} has been closely related to other questions. In \cite{PTBW2}, it is shown that Conjecture \ref{conj:l-torsion} is implied by assuming a moment version of Cohen-Lenstra heuristics. Conjecture \ref{conj:l-torsion} is also closely related to proving upper bounds in counting number fields \cite{Klu,KluDp,Klu12,Widmer1,Alb,ML17}, number of ellliptic curves with a fixed conductor \cite{BruSil}, number of integral points of elliptic curves, and size of Selmer groups and ranks of elliptic curves and hyperelliptic curves \cite{BruKra,BSTTTZ,HV06}. 

By a theorem of Brauer-Siegel, see for example \cite{SL}, the class number of $F$ with $[F:\mathbb{Q}]=d$ is bounded by $O_{\epsilon,d}(\Disc(F)^{1/2+\epsilon})$, therefore we get the so-called \emph{trivial bound} for $\ell$-torsion in class groups:
\begin{equation}\label{eqn:trivial-bound}
|\Cl_F[\ell]|  \le O_{\epsilon}(\Disc(F)^{1/2+\epsilon}).
\end{equation}
As one can observe, there is a huge gap between the trivial bound and Conjecture \ref{conj:l-torsion}. The only case where Conjecture \ref{conj:l-torsion} is proved to the full strength is when $(d,\ell)=(2,2)$ due to Gauss by genus theory. Aside from this special case, it is even wildly open to prove a result in the form of (\ref{eqn:trivial-bound}) by replacing $1/2$ with any $0<1/2-\delta<1/2$. We will call such a bound a \emph{non-trivial bound} for $\ell$-torsion in class groups. Notice that for a fixed degree, there are only finitely many possible Galois groups, and fields with different Galois groups behave very differently. Therefore it is natural to split up discussions of $(d, \ell)$ to $(G, \ell)$ for a transitive permutation group $G\subset S_d$ with degree $d$, that is, considering the bound for $\Cl_F[\ell]$ where the Galois closure $\tilde{F}$ of $F/k$ has $\Gal(\tilde{F}/k)=G$, see works towards this question \cite{ML17,Widmer1,FreWid18,Chen,FreWid18x}. Aside from special cases that can be handled by genus theory, previously people can only get non-trivial bound for $(G, \ell)$: when $\ell = 2$ for all Galois groups $G$ (i.e., for all degree $d$), see \cite{BSTTTZ}, and $\ell=3$ for all small degree number fields with $d\le 4$, see \cite{EV07,Pie05,HV06}. In terms of conditional results, the work of Ellenberg-Venkatesh \cite{EV07} shows a non-trivial bound for all $G$ and all $\ell$ in the order of $O_{\epsilon,k}(\Disc(F)^{1/2-\frac{1}{2\ell(d-1)}+\epsilon})$ where $d= [F:k]$ by assuming GRH. Indeed, a critical lemma in \cite{EV07} shows that $|\Cl_F[\ell]|$ can be non-trivially bounded as long as there exist many small split primes, which is guaranteed by GRH in general. See Lemma \ref{lem:EVW} for a precise statement. Recently there has been an emerging group of works, see e.g. \cite{Ellen16,ML17,Widmer1,FreWid18,Chen,FreWid18x,ZTArtin}, towards removing the GRH condition in \cite{EV07}. All of these works only obtain results \emph{on average} in order to remove GRH. More precisely, such average results prove that a non-trivial bound holds for number fields within a family of number fields with a possible zero-density exceptional set. 

In this paper, we will focus on cases where $G$ is an elementary abelian group with rank $r>1$ and $\ell>1$. In particular, we obtain a genuinely pointwise bound on $|\Cl_F[\ell]|$ for arbitrary $\ell>1$ that is unconditional. We prove the following theorem.
\begin{theorem}[Theorem \ref{thm:odd-final-results-overk}, Theorem \ref{thm:even-induction-overQ} and Theorem \ref{thm:even-induction-overk} ]\label{thm:main}
Given $A= (\zp)^r$ where $r>1$ and an integer $\ell>1$. There exists $\delta(\ell,p)>0$ such that for any $A$-extension $L/\Q$, we have
$$|\Cl_{L}[\ell]| \le O_{\epsilon}( \Disc(L)^{1/2-\delta(\ell,p)+\epsilon}).$$
\end{theorem}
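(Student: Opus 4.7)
The plan is to combine the Ellenberg--Venkatesh criterion (Lemma \ref{lem:EVW}) with analytic inputs available specifically for abelian extensions, bolstered by an induction on the rank $r$. Since $L/\Q$ is abelian, Kronecker--Weber puts $L \subseteq \Q(\zeta_N)$ where $N$ is the conductor of $L$, and a rational prime $q\nmid N$ splits completely in $L$ exactly when its class in $(\Z/N\Z)^\times$ lies in the subgroup $H$ that cuts out $L$. Thus Lemma \ref{lem:EVW} reduces the task of proving $|\Cl_L[\ell]|\le \Disc(L)^{1/2-\delta+\epsilon}$ to producing $\gg \Disc(L)^\eta$ rational primes of size at most $X := \Disc(L)^{1/(2\ell(d-1))}$, where $d=p^r$, lying in the union of residue classes $H \bmod N$ — a question on primes in arithmetic progressions, attackable without GRH.

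The first technical step is to compare $\Disc(L)$ with the modulus $N$ via the conductor--discriminant formula $\Disc(L) = \prod_{\chi\in\widehat{A}} \Cond(\chi)$, which gives tight control over how small $X$ can be relative to $N$. In the regime where the character conductors are reasonably balanced, so that $X$ is comfortably larger than the Linnik-type threshold for modulus $N$, I would apply Siegel--Walfisz together with log-free zero density estimates for Dirichlet $L$-functions to produce the required primes, yielding the bound with an explicit $\delta(\ell,p)>0$ depending on the admissible exponents. When the conductors are instead "clustered", so that $N$ is nearly the full discriminant, the estimate on primes in progressions is too weak on its own, and this is where the rank hypothesis $r>1$ enters.

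For the clustered regime, and simultaneously to drive the induction, I would use the $(p^r-1)/(p-1)$ hyperplanes of $A$: by an averaging argument one selects a subgroup $H\le A$ of order $p$ so that the fixed field $L^H$, itself a $(\zp)^{r-1}$-extension of $\Q$, has $\Disc(L^H)$ well controlled relative to $\Disc(L)$. Induction on $r$ supplies a non-trivial bound on $|\Cl_{L^H}[\ell]|$, which is transferred back to $L$ through the cyclic degree-$p$ extension $L/L^H$: the kernel of the norm $\Cl_L\to\Cl_{L^H}$ on $\ell$-torsion is controlled by genus theory and the ambiguous class number formula, losing only a factor polynomial in the number of ramified primes of $L/L^H$, which is absorbable in $\Disc(L)^\epsilon$. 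The base case of the induction is handled by the direct analytic argument of the second paragraph; working over a general number field $k$ rather than $\Q$ (as required by the "overk" theorems) replaces Kronecker--Weber by abelian class field theory on $k$ but otherwise proceeds in the same manner.

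The dominant obstacle, and the reason Theorem \ref{thm:main} is assembled from three constituent theorems, is the behavior at $p=2$. For odd $p$ every non-trivial character of $A$ has order $p>2$ and is non-real, so Siegel zeros do not disrupt the lower bounds on prime counts, and the analytic step runs cleanly: this is Theorem \ref{thm:odd-final-results-overk}. For $p=2$ the characters are real and an exceptional Siegel zero is a genuine threat. I would manage this by the standard dichotomy — either no exceptional zero exists, and the argument is unaffected; or one exists, in which case by Landau it is essentially unique, and the corresponding quadratic subfield can be peeled off the tower $L/L^H$ and treated separately, feeding a modified induction. Accounting for this split is what necessitates the separate Theorems \ref{thm:even-induction-overQ} and \ref{thm:even-induction-overk}. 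The main quantitative difficulty is ensuring $\delta(\ell,p)$ remains positive uniformly in $r$ so that the induction does not degenerate, and tracking the cumulative losses through the tower so that the final exponent genuinely breaks the GRH-conditional Ellenberg--Venkatesh bound once $r$ is large enough.
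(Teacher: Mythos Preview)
Your proposal has two genuine gaps that prevent it from going through.

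First, the direct analytic step never applies. By conductor--discriminant, $\Disc(L)=\prod_{\chi\ne 1}\Cond(\chi)\le N^{d-1}$, so always $N\ge\Disc(L)^{1/(d-1)}$. Hence $X=\Disc(L)^{1/(2\ell(d-1))}\le N^{1/(2\ell)}$, which for every $\ell>1$ lies far below any Linnik-type threshold (even with no real characters, the best unconditional exponents for the least prime in a progression are well above $1$). Your ``balanced'' regime, defined as the regime where $X$ exceeds that threshold, is therefore empty, and there is no base case for the induction. Second, the transfer step is wrong. Genus theory and the ambiguous class number formula control $p$-torsion in $\Cl_L$ for a degree-$p$ extension $L/L^H$; they say nothing about $\ell$-torsion when $(\ell,p)=1$. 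In that case the norm splits $\Cl_L[\ell]\simeq\Cl_{L^H}[\ell]\oplus\Cl_{L/L^H}[\ell]$, and $\Cl_{L/L^H}[\ell]$ is not bounded by the number of ramified primes --- it is precisely the hard piece, with trivial bound $\Disc(L/L^H)^{1/2+\epsilon}$. Descending one step in rank therefore gains nothing.

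The paper's mechanism is different in kind. It decomposes along \emph{all} $(p^r-1)/(p-1)$ degree-$p$ subfields simultaneously: $|\Cl_{L/k}[\ell]|=\prod_i|\Cl_{K_i/k}[\ell]|$ and $\Disc(L/k)=\prod_i\Disc(K_i/k)$ (Lemmas~\ref{lem:class-grp-decomposition} and~\ref{lem:disc-prod}), so a saving in any single $K_i$ transfers to $L$. The analytic input is never aimed at $L$ itself but at a \emph{pair} of degree-$p$ subfields: any prime inert in $K_1$ must split in some other $K_j$, and Brun--Titchmarsh is used to upper-bound the primes split in $K_1$ in a window $x=\Disc(K_2)^{\Delta(\ell,p)}$. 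When $\Disc(K_2)$ is large relative to $\Disc(K_1)$ this window dominates $\Cond(K_1)\asymp\Disc(K_1)^{1/(p-1)}$, and that size disparity is exactly what lets an unconditional sieve bound land. Finally, the odd/even split in the paper is not about Siegel zeros: it is that for $p=2$ the Brun--Titchmarsh factor $\tfrac{2}{p}$ no longer beats $1$, so the paper substitutes Maynard's lower bound (Lemma~\ref{lem:Maynard}) at rank~$2$ and recovers the Brun--Titchmarsh pigeonhole at rank~$3$, where more subfields are available.
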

\begin{remark}
Analogues of Theorem \ref{thm:main} over general number field $k$ are also proved, see Theorem \ref{thm:odd-final-results-overk} and Theorem \ref{thm:even-induction-overk}, where different savings $\delta_k(\ell, p)$ are obtained also depending on $k$. Here in order to state a uniform result in Theorem \ref{thm:main}, for $p$ odd, the saving $\delta(\ell,p)$ is taken to be $\delta_{\Q}(\ell_{(p)}, p)$ in Theorem \ref{thm:odd-final-results-overk}; and for $p=2$, the saving $\delta(\ell, 2)$ is taken to be $\delta_{\Q}(\ell_{(2)}, 2)$ in Theorem \ref{thm:even-induction-overk} and \ref{thm:even-rank2-results-overk}. For $p=2$ and $r>2$, a better saving is stated in Theorem \ref{thm:even-induction-overQ}. For $\ell=p$, of course we have a much better bound $|\Cl_L[p]| \le O_{\epsilon} (\Disc(L)^{\epsilon})$ by genus theory, for example see Theorem $3$ in \cite{Cor83}. All results in this paper are effective. 
\end{remark}

It is worth noticing that this is the first family of Galois groups $G$ where $\ell$-torsion in class groups of $G$-extensions are bounded non-trivially for every integer $\ell>1$ unconditionally. 

A very important characteristic of the savings $\delta(\ell,p)$ in Theorem \ref{thm:main} (including its analogue $\delta_k(\ell,p)$ over general number field $k$) is that it does not depend on the rank $r$ of $A$. Therefore there exists $r_0 = r_0( \ell, p)$ such that when $r> r_0(\ell,p)$, we have
$$\delta(\ell, p) \ge \frac{1}{2\ell(d-1)} = \frac{1}{2\ell(p^r-1)} = \text{ the saving proved in \cite{EV07} by assuming GRH}.$$

The main strategy of this work is to take advantage of the group structure of elementary abelian groups $A = (\zp)^r$ with $r>1$.
\begin{itemize}
	\item Firstly, an elementary abelian group $A = (\zp)^r$ has $(p^r-1)/(p-1)$ index-$p$ subgroups $A_i$ with $A/A_i\simeq \zp$. Correspondingly, for any $A$-extension $L/k$ with $\Gal(L/k)=A$, we get $(p^r-1)/(p-1)$ degree $p$ sub-extensions $K_i/k$ with Galois group $\Gal(K_i/k)=\zp$. Considering $\Cl_{L/k}[\ell]$ as a Galois module with Galois group $\Gal(L/k)$, it can be decomposed along the fixed part by $A_i$ for all index-$p$ subgroup $A_i$, therefore we can obtain equalities like 
	$$|\Cl_{L/k}[\ell]| = \prod_{K_i/k} |\Cl_{K_i/k}[\ell]|,\quad\quad \Disc(L/k)= \prod_{K_i/k} \Disc(K_i/k),$$
	where $K_i/k$ ranges over all degree $p$ subfields of $L$. For more details on these equalities, see Lemma \ref{lem:class-grp-decomposition} and \ref{lem:disc-prod}. Therefore we can reduce the question of $L/k$ to the question of subfields $K_i/k$. This is essentially the key reason why the bound we obtain behave better than the GRH bound when $r$ is sufficiently large.
	\item
	Secondly, the decomposition group of an $A$-extension at unramified primes must be $\zp$ since every cyclic subgroup of $A$ is isomorphic to $\zp$. Therefore every unramified prime $p$ is at least split in $(p^{r-1}-1)/(p-1)$ degree $p$ subfields of $L/k$. This guarantees the existence of split primes. 
	\item
	Thirdly, by the conductor-discriminant formula, we can give lower bound on the discriminant of subfields, see for example Lemma \ref{lem:rank2-disc-bound}. Then we can apply results on upper or lower bounds of prime counting functions where the range of consideration is in the order of a polynomial in the modulus, see section \ref{sec:analytic} for a collection of some results in this direction that we use, and see Theorem \ref{thm:odd-incomparable-overQ} for an example how we apply them. 
\end{itemize}

The organization of the paper is as follows. In section \ref{sec:algebraic}, we introduce the algebraic lemmas in preparation for the later proof. It includes several necessary equalities of class groups and discriminants and inequalities of discriminants. In section \ref{sec:analytic}, we collect several results on upper and lower bounds of prime counting function. They all share the property that the range of primes considered is in a polynomial order of the discriminant. In section \ref{sec:E-V}, we revisit the critical lemma from \cite{EV07} on bounding $\ell$-torsion in the class groups conditional on the existence of small split primes. In section \ref{sec:odd}, we give the proof of Theorem \ref{thm:main}, including its analogue over general number field, when $p$ is odd. In section \ref{sec:even}, we give the proof of Theorem \ref{thm:main}, including its analogue over general number field, when $p=2$. We mention that section \ref{sec:odd} and \ref{sec:even} share a lot of similarities in spirit, whereas section \ref{sec:even} deals with some new complication when $p=2$. In order to grasp the main idea, it is recommended to read section \ref{sec:odd} first. 

\section{Notations}
\noindent
$k$: a number field considered as the base field\\
$|\cdot|$: the absolute norm $\Nm_{k/\Q}$\\
$\Gal(F/k)$: Galois group of $F/k$\\
$\Disc(F/k)$: relative discriminant $|\disc(F/k)|$ of $F/k$ where $\disc(F/k)$ is the relative discriminant ideal in $k$, when $k=\mathbb{Q}$ it is the usual absolute discriminant\\
$\Cl_{F/k}$: relative class group of $F/k$, when $k=\mathbb{Q}$ it is the usual class group of $F$\\
$\Cl_{F/k}[\ell]$:  $\{ [\alpha]\in \Cl_{F/k}\mid \ell [\alpha] = 0\in \Cl_{F/k} \}$\\
$|\Cl_{F/k}[\ell]|$, $|\Cl_{F}[\ell]|$: the size of $\Cl_{F/k}[\ell]$, $\Cl_{F}[\ell]$\\
$M^G$: the maximal submodule of the $G$-module $M$ that is invariant under $G$\\
$M_G$: the maximal quotient module $M/I_G(M)$ of the $G$-module $M$ that is invariant under $G$\\
$I_G$: the augmentation ideal $\large \langle \sigma-1 \mid \sigma\in G \large\rangle \subset R[G]$ in the group ring with coefficient ring $R$\\
$\pi(Y; q, a)$: the number of prime numbers $p$ such that $p< Y$ and $p\equiv a \mod q$\\
$\pi(Y; L/k, \mathcal{C})$: the number of unramified prime ideals $p$ in $k$ with $|p|<Y$ and $\Frob_{p} \in \mathcal{C}$ where $\mathcal{C}$ is a conjugacy class of $\Gal(L/k)$ \\
$\pi(Y; L/k, \hat{C})$: the number of unramified prime ideals $p$ in $L$ with $|p|<Y$ and $\Frob_{p} \notin \mathcal{C}$ where $\mathcal{C}$ is a conjugacy class of $\Gal(L/k)$\\
$A\asymp B$: there exist absolute constants $C_1$ and $C_2$ such that $C_1 B\le A\le C_2 B$\\
$\Delta(\ell,d)$: a constant number slightly smaller than $\frac{1}{2\ell(d-1)}$, see Remark \ref{rmk:Delta}\\
$\ell_{(p)}$: the maximal factor of $\ell$ that is relatively prime to a prime number $p$ for an integer $\ell>1$\\
$\eta(L/k)$: see (\ref{def:eta-odd}) when $\Gal(L/k) = \zp\times \zp$ with $p$ odd and see (\ref{def:eta-rank3}) when $\Gal(L/k) = (\mathbb{Z}/2\mathbb{Z})^3$ \\
$\eta_0(\ell, p)_k$: a cut-off for $\eta(L/k)$ that is determined in Theorem \ref{thm:odd-incomparable-overQ} and \ref{thm:odd-incomparable-overk} when $\Gal(L/k)$ has rank $2$; we will drop $k$ when $k=\Q$\\
$\delta$: through out the paper we always use $\delta$ to denote a power saving from the trivial power $1/2$ in the bound; we use $\delta_c$ to denote the power saving when $\eta(L/k)$ is small and $\delta_{ic}$ to denote the power saving when $\eta(L/k)$ is big. \emph{Small} and \emph{big} are quantified by comparing to $\eta_0(\ell,p)_k$. \\

Warning: In order to simplify the notation for the whole paper, unless specifically mentioned otherwise, the implied constants $O_{\epsilon}$, $O_{\epsilon,k}$, $O_{\epsilon, k, \epsilon_0}$ will always depend on $\ell, d$ aside from the dependence indicated in the symbol when we are stating results or conjectures on bounding $\ell$-torsion in class groups of degree $d$ extensions.\\

\section{Algebraic Theory}\label{sec:algebraic}
In this section, firstly we are going to state several standard equalities of class group and discriminants, Lemma \ref{lem:class-grp-decomposition} and \ref{lem:disc-prod} from algebraic number theory that will be of crucial use for later proof. These results and equalities are known previously, for example see \cite{CM87}. Here we only include a proof for the convenience of the readers. Secondly, we will give a ramification analysis on $A$-extensions and prove critical lemmas Lemma \ref{lem:rank2-disc-bound} and \ref{lem:rank3-disc-bound} throughout the proof. 

\subsection{Relative Class Group}\label{ssec:relative-class-grp}
In this section, we define the notion of relative class group. The relative class group $\Cl_{F/k} \subset \Cl_F$ is defined to be $\Ker(\Nm)$ where $\Nm: \Cl_F\to \Cl_k$ is induced from the usual norm on fractional ideals of $F$. 

Fix an integer $\ell>1$ that is relatively prime to the degree $[F:k]$, we will show that the following forms a short exact sequence
$$0 \to \Cl_{F/k}[\ell] \to \Cl_F[\ell] \to \Cl_k[\ell] \to 0. $$
Indeed, denote the map $\iota: \Cl_k\to \Cl_F$ which is induced from the usual embedding of fractional ideals. We know that $\Nm\circ \iota: \Cl_k \to \Cl_k$ is equivalent to multiplication by $[F:k]$, which is an isomorphism on the $\ell$-torsion part $\Cl_{k}[\ell]$. Therefore $\Nm: \Cl_F[\ell] \to \Cl_k[\ell]$ is surjective and $\iota: \Cl_k[\ell] \to \Cl_F[\ell]$ is injective and gives a section of the short exact sequence above.

If $F/k$ is Galois with $\Gal(F/k)=G$, then the class group $\Cl_F[\ell]$ can be considered as a Galois module with Galois group $G$. Since $(|G|,\ell)=1$, the Tate cohomology $\hat{H}^i(G, \Cl_F[\ell])$ vanishes for every $i$. It follows from $\hat{H}^0(G, \Cl_F[\ell]) =  (\Cl_F[\ell])^G/\iota\circ \Nm(\Cl_F[\ell])=0$ that $(\Cl_F[\ell])^G =\iota\circ \Nm(\Cl_F[\ell]) = \iota(\Cl_k[\ell]) \simeq \Cl_k[\ell]$. The last two equalities come from $\Nm$ being surjective and $\iota$ being injective. Similarly, it follows from $\hat{H}^{-1}(G, \Cl_F[\ell]) =  \Cl_{F/k}[\ell]/I_G (\Cl_F[\ell])=0$ that $(\Cl_F[\ell])_G = \Cl_F[\ell]/I_G(\Cl_F[\ell]) = \Cl_F[\ell]/\Cl_{F/k}[\ell]\simeq \Cl_k[\ell]$.

\subsection{Class Group Decomposition}
The main goal of the following lemma is to reduce the questions about elementary abelian extensions to those of their sub-extensions. 
\begin{lemma}\label{lem:class-grp-decomposition}
	Given an elementary abelian group $A =( \mathbb{Z}/p\mathbb{Z})^r$ with $r>1$ and an integer $\ell>1$ with $(\ell, p) =1$. For any $A$-extension $L/k$, 
	$$|\Cl_{L/k}[\ell]| = \prod_{K_i/k} |\Cl_{K_i/k}[\ell]|,$$
	where $K_i/k$ ranges over all subfields of $L$ with $[K_i:k]=p$. 
\end{lemma}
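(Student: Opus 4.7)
The plan is to exploit the semisimplicity of the group algebra $(\Z/\ell\Z)[A]$ (which holds because $(\ell,|A|)=(\ell,p^r)=1$) to decompose $\Cl_L[\ell]$ as a $(\Z/\ell\Z)[A]$-module via a concrete system of orthogonal idempotents, one for each index-$p$ subgroup $A_i$. I will then identify each idempotent summand with $\Cl_{K_i/k}[\ell]$ using the class group machinery already set up in section \ref{ssec:relative-class-grp}.

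Concretely, I would introduce the averaging idempotents
\[
e_0 = \frac{1}{|A|}\sum_{a\in A}a, \qquad e_i = \frac{1}{|A_i|}\sum_{a\in A_i}a \in (\Z/\ell\Z)[A],
\]
which are well defined since $p$ is a unit mod $\ell$. A short calculation shows $e_i\cdot e_j = e_0$ whenever $A_i\ne A_j$ (because $\langle A_i,A_j\rangle = A$ in an elementary abelian group), and $e_0 e_i = e_0$. Hence the elements $e_0$ and $\{e_i - e_0\}_i$ are pairwise orthogonal idempotents. To check they sum to $1$, I would expand $\sum_i e_i$ by counting, for each $a\in A$, the number of index-$p$ subgroups containing it: the count is $N=(p^r-1)/(p-1)$ for $a=1$ and $(p^{r-1}-1)/(p-1)$ for $a\ne 1$. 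This yields $\sum_i e_i = 1 + (N-1)e_0$, i.e.\ $e_0 + \sum_i (e_i-e_0) = 1$.

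Applying these idempotents to $\Cl_L[\ell]$ produces the internal direct sum
\[
\Cl_L[\ell] = e_0\Cl_L[\ell]\ \oplus\ \bigoplus_{i}(e_i-e_0)\Cl_L[\ell].
\]
By section \ref{ssec:relative-class-grp}, $e_j\Cl_L[\ell]=\Cl_L[\ell]^{A_j}\simeq \Cl_{K_j}[\ell]$ (with $K_0=k$), so the $e_0$-component is $\iota(\Cl_k[\ell])$. For the $i$-th summand, $(e_i-e_0)\Cl_L[\ell]$ sits inside $e_i\Cl_L[\ell]\simeq \Cl_{K_i}[\ell]$ as the kernel of the restriction of $e_0$. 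Tracing the action through $\iota_i$ and using that $\sum_{a\in A/A_i}a$ acts on $\Cl_{K_i}$ as $\iota_{K_i/k}\circ\Nm_{K_i/k}$, I identify this kernel with the kernel of $\Nm_{K_i/k}$ on $\Cl_{K_i}[\ell]$, which is by definition $\Cl_{K_i/k}[\ell]$. Since $\Cl_{L/k}[\ell]$ is itself the kernel of $\Nm_{L/k}$ on $\Cl_L[\ell]$, equivalently the image of $1-e_0$, collecting terms gives $\Cl_{L/k}[\ell]\simeq \bigoplus_i \Cl_{K_i/k}[\ell]$ and the multiplicative identity on sizes follows.

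The main piece of care is the final identification of $(e_i-e_0)\Cl_L[\ell]$ with $\Cl_{K_i/k}[\ell]$: one must be precise about the normalization (the factor $1/p$) when comparing $e_0$ restricted to $A_i$-invariants with the composition $\iota_{K_i/k}\circ\Nm_{K_i/k}$. Everything else is routine group-algebra manipulation and the semisimplicity argument already used in section \ref{ssec:relative-class-grp}.
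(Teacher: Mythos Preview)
Your proposal is correct and follows essentially the same approach as the paper: both exploit Maschke's theorem for $(\Z/\ell\Z)[A]$ and decompose via the averaging idempotents $e_i=\frac{1}{|A_i|}\sum_{a\in A_i}a$ attached to the index-$p$ subgroups, then identify each piece with $\Cl_{K_i/k}[\ell]$ using the norm/inclusion machinery of section~\ref{ssec:relative-class-grp}. The only cosmetic difference is that the paper first passes to the faithful module $\Cl_{L/k}[\ell]$ (so that $e_0$ acts trivially and the $\epsilon_i$ themselves become orthogonal on $I_A$), whereas you decompose the full $\Cl_L[\ell]$ with the explicit orthogonal system $e_0,\{e_i-e_0\}$ and your counting identity $\sum_i e_i = 1+(N-1)e_0$; the two presentations are equivalent.
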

\begin{proof}
The class group $\Cl_{L/k}[\ell]$ is naturally an $\mathbb{Z}/\ell\mathbb{Z}[A]$-module since $\Gal(L/k)$ acts on it.  For an elementary group $A$ and an integer $\ell$ with $(|A|, \ell)=1$, we have that $\mathbb{Z}/\ell\mathbb{Z}[A]$ is semi-simple by Maschke's theorem. We can decompose the augmentation ideal
$$I_A = \oplus_i \epsilon_i I_A,$$
where $\epsilon_i = \frac{1}{|A_i|} \sum_{a\in A_i} a$ and $A_i$ ranges over all index-$p$ subgroup of $A$. It can be easily shown that $\epsilon_i^2= \epsilon_i$ and $\epsilon_i \circ \epsilon_j I_A = 0$. Therefore any faithful $\mathbb{Z}/\ell\mathbb{Z}[A]$-module $M$ (meaning $M_A$ is trivial), $M$ can be written as a direct sum
$$M = M\otimes (\mathbb{Z}/\ell\mathbb{Z})[A]= M \otimes I_A \oplus M\otimes (\mathbb{Z}/\ell\mathbb{Z})[A]/I_A=  \oplus_i \epsilon_i M \oplus M_A =\oplus_i \epsilon_i M,$$
where the summation is over all index-$p$ subgroups $A_i\subset A$. 

By the discussion in section \ref{ssec:relative-class-grp}, the module $M = \Cl_{L/k}[\ell]$ as a submodule of $\Cl_L[\ell]$ is faithful: it can be easily seen by applying $(\cdot )_G$ to the short exact sequence in section \ref{ssec:relative-class-grp} and noticing $\Cl_F[\ell]_G \simeq \Cl_k[\ell]$. Given $\epsilon_i$ corresponding to $A_i\subset A$ and $K_i$ the field fixed by $A_i$, the sub-module $\epsilon_i M = \Nm_{A_i}(M) = \Cl_{L/k}[\ell]/ \Cl_{L/K_i}[\ell]$: it can be seen by the following diagram.  Therefore $|\epsilon_i M| = |\Cl_{K_i/k}[\ell]|$.
\begin{center}\label{diag:relative-class-grp}
	\begin{tikzcd}				
		0\arrow{r} & \Cl_{L/K_i}[\ell] \cap \Cl_{L/k}[\ell] = \Cl_{L/K_i}[\ell] \arrow{r}\arrow[d, hook] & \Cl_{L/k}[\ell] \arrow{r}\arrow[d, hook] & \Nm_{A_i}(\Cl_{L/k}[\ell]) \arrow{r}\arrow[d,hook] & 0 \\		
		0\arrow{r} & \Cl_{L/K_i}[\ell]\arrow{r} & \Cl_L[\ell] \arrow[r,"\Nm_{A_i}"] & \Cl_{K_i}[\ell] \arrow{r} & 0 \\
	\end{tikzcd}
\end{center}
\end{proof}
Next we apply Lemma \ref{lem:class-grp-decomposition} to degree $p^2$ subfields of $A= (\zp)^r$ with $r>2$. Notice that every $K_i$ are contained in exactly $(p^{r-1}-1)/(p-1)$ subfields $M_j$ with $[M_j:k]=p^2$, so we have the following equality. 
\begin{corollary}\label{coro:class-grp-second-layer-decomposition}
	Given an elementary abelian group $A = (\zp)^r$ with $r>2$ and an integer $\ell>1$ with $(\ell, p) =1$. For any $A$-extension $L/k$, 
	$$|\Cl_{L/k}[\ell]| = \prod_{M_j/k} |\Cl_{M_j/k} [\ell]|^{(p-1)/(p^{r-1}-1)} = \prod_{F_s/k} |\Cl_{F_s/k}[\ell]|^{(p-1)/(p^{r+1-t}-1)},$$
	where $M_j/k$ ranges over all subfields of $L$ with $[M_j:k]=p^2$, and $F_s/k$ ranges over all subfields of $L$ with $[F_s:k] = p^t$. 
\end{corollary}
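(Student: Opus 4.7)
The plan is to apply Lemma \ref{lem:class-grp-decomposition} at two different levels of the subfield lattice and then combine the resulting identities by a Galois-theoretic double count, exactly in the spirit indicated by the sentence preceding the corollary.

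First, I would apply Lemma \ref{lem:class-grp-decomposition} directly to $L/k$ to write
\[
|\Cl_{L/k}[\ell]| \;=\; \prod_{K_i/k} |\Cl_{K_i/k}[\ell]|,
\]
where $K_i$ ranges over the $(p^r-1)/(p-1)$ degree-$p$ subfields of $L$. The task then becomes to repackage this product over degree-$p$ subfields as a product over degree-$p^2$ (resp.\ degree-$p^t$) subfields. The natural way to do so is to apply Lemma \ref{lem:class-grp-decomposition} a second time, now to each intermediate field separately: for each degree-$p^2$ subfield $M_j/k$, which is itself a $(\zp)^2$-extension, I get
\[
|\Cl_{M_j/k}[\ell]| \;=\; \prod_{K_l \subset M_j} |\Cl_{K_l/k}[\ell]|,
\]
where the product runs over the $p+1$ degree-$p$ subfields of $M_j$.

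Next I would carry out the double count that links the two expressions. Under the Galois correspondence, $K_i \subset M_j$ translates to $\Gal(L/M_j) \subset \Gal(L/K_i) = A_i$. Since $A_i \simeq (\zp)^{r-1}$ and $\Gal(L/M_j)$ is an index-$p$ subgroup of $A_i$, the number of $M_j$ containing a fixed $K_i$ equals the number of hyperplanes in $\F_p^{r-1}$, namely $(p^{r-1}-1)/(p-1)$. Taking the product over all $M_j$ and regrouping gives
\[
\prod_{M_j/k} |\Cl_{M_j/k}[\ell]| \;=\; \prod_{K_i/k} |\Cl_{K_i/k}[\ell]|^{(p^{r-1}-1)/(p-1)},
\]
and raising both sides to the $(p-1)/(p^{r-1}-1)$ power yields the first claimed identity.

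For the second identity, involving degree-$p^t$ subfields $F_s$, the strategy is the same: apply Lemma \ref{lem:class-grp-decomposition} to each $F_s/k$ (a $(\zp)^t$-extension) to decompose $|\Cl_{F_s/k}[\ell]|$ into a product over its degree-$p$ subfields, then perform the analogous double count in the subspace lattice of $\F_p^r$ to read off the exponent. There is no real obstacle; the only care needed is combinatorial, namely verifying that the Gaussian-binomial multiplicity $\#\{F_s \supset K_i\}$ matches the stated exponent so that the final formula comes out uniform in $i$. Everything else is an immediate consequence of Lemma \ref{lem:class-grp-decomposition} applied at the appropriate intermediate level.
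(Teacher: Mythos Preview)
Your approach is exactly the paper's: the entire proof in the paper is the one sentence preceding the corollary, namely the observation that each degree-$p$ subfield $K_i$ lies in exactly $(p^{r-1}-1)/(p-1)$ of the degree-$p^2$ subfields $M_j$, combined with Lemma \ref{lem:class-grp-decomposition} applied to $L/k$ and to each $M_j/k$. One caution on the step you flagged: when you actually carry out the double count for general $t$, the multiplicity $\#\{F_s \supset K_i\}$ is the Gaussian binomial $\binom{r-1}{t-1}_p$, which equals $(p^{r+1-t}-1)/(p-1)$ for $t=2$ but not for larger $t$ in general, so trust the exponent your method produces rather than forcing it to match the displayed one.
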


\subsection{Ramification Analysis}
The main goal of this section is to give an analysis on the discriminants of all sub-extensions of $L/k$ when $\Gal(L/k)=A$ and $A$ is an elementary abelian group. 

\begin{lemma}\label{lem:disc-prod}
	Given an elementary group $A =( \mathbb{Z}/p\mathbb{Z})^r$ with $r>1$. For any $A$-extension $L/k$, we have
	$$\Disc(L/k)= \prod_{K_i/k} \Disc(K_i/k),$$
	where $K_i/k$ ranges over all subfield of $L$ with $[K_i:k]=p$. 
\end{lemma}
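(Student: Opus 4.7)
The plan is to invoke the conductor-discriminant formula for abelian extensions and match characters of $A$ with characters of the degree-$p$ subextensions. Since $A = (\mathbb{Z}/p\mathbb{Z})^r$ is abelian, the conductor-discriminant formula gives
$$\disc(L/k) = \prod_{\chi \in \widehat{A}} \mathfrak{f}(\chi),$$
where $\chi$ ranges over the characters of $A$ and $\mathfrak{f}(\chi)$ is the conductor ideal of $\chi$ in $k$. Taking absolute norms, $\Disc(L/k) = \prod_{\chi} |\mathfrak{f}(\chi)|$, and the trivial character contributes $1$.

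Next I would set up the character-theoretic bijection that makes the product factor over subfields. Every non-trivial character $\chi$ of $A$ has order $p$, so its kernel $\ker(\chi)$ is an index-$p$ subgroup of $A$. Conversely, each index-$p$ subgroup $A_i \subset A$ has quotient $A/A_i \cong \mathbb{Z}/p\mathbb{Z}$, which carries exactly $p-1$ non-trivial characters; inflated to $A$ these are precisely the non-trivial characters whose kernel is $A_i$. Hence the set of non-trivial characters of $A$ is partitioned as
$$\widehat{A} \setminus \{1\} \;=\; \bigsqcup_{A_i} \bigl\{\chi \in \widehat{A} \setminus \{1\} : \ker(\chi) = A_i\bigr\},$$
where $A_i$ runs over the index-$p$ subgroups, equivalently over the degree-$p$ subfields $K_i = L^{A_i}$ via Galois correspondence.

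Now I would apply the conductor-discriminant formula a second time, to each cyclic extension $K_i/k$ with $\Gal(K_i/k) \cong \mathbb{Z}/p\mathbb{Z}$:
$$\Disc(K_i/k) = \prod_{\psi \in \widehat{\Gal(K_i/k)}} |\mathfrak{f}(\psi)|.$$
The crucial compatibility is that conductors are preserved under inflation: if $\chi$ is the inflation of $\psi$ from $\Gal(K_i/k) = A/A_i$ to $A$, then $\mathfrak{f}(\chi) = \mathfrak{f}(\psi)$, since both characters cut out the same ray class character of $k$ (their images lie in the same cyclic extension $K_i/k$, so they are ramified at the same primes with the same conductor exponents). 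Combining this with the partition of characters above yields
$$\prod_{K_i/k} \Disc(K_i/k) = \prod_{K_i/k} \prod_{\psi \neq 1} |\mathfrak{f}(\psi)| = \prod_{\chi \in \widehat{A} \setminus \{1\}} |\mathfrak{f}(\chi)| = \Disc(L/k),$$
which is the claimed identity.

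The only point that requires real care, rather than bookkeeping, is the invariance of the conductor under inflation; once that is in hand, the rest is an index calculation. I would state this as a one-line lemma (or cite it from a standard reference such as Neukirch's class field theory) before assembling the products. Everything else — the character-kernel correspondence and the Galois correspondence between index-$p$ subgroups and degree-$p$ subfields — is purely formal.
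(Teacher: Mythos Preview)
Your proof is correct and takes essentially the same approach as the paper: both invoke the conductor--discriminant formula and decompose the non-trivial part of the regular representation of $A$ along the index-$p$ subgroups $A_i$, then use invariance of conductors under inflation to identify each piece with $\Disc(K_i/k)$. The paper phrases this in the language of Artin conductors of representations (writing $\rho - 1 = \oplus_i (\rho^{A_i} - 1)$), while you spell out the same decomposition character by character, but for an abelian group these are identical arguments.
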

\begin{proof}
Recall that $\Disc(L/k)$ is the Artin-conductor of $L/k$ with the representation$\rho$ of $A$ where $\rho$ is the regular representation of $A$ over $\C$. Then $\rho-1 = \oplus_i \rho_i$ where $\rho_i = (\rho-1)^{A_i} =\rho^{A_i} - 1$ where $1$ is denoted to be the trivial representation of $A$. Therefore notice that the Artin-conductor with trivial character is trivial, we get the Artin conductor $\mathfrak{f}$ associated to $\rho$ is decomposed as:
$$\Disc(L/k) = \mathfrak{f}_{L/k}(\rho) = \prod_{[A:A_i=p]} \mathfrak{f}_{L/k}(\rho^{A_i}) = \prod_{[K_i:k]=p} \mathfrak{f}_{K_i/k}(\rho_i) = \prod_{[K_i:k]=p}\Disc(K_i/k).$$
\end{proof}
Similarly with Corollary \ref{coro:class-grp-second-layer-decomposition}, we also have
\begin{equation}\label{eqn:disc-prod-second-layer}
\Disc(L/k)= \prod_{M_j/k} \Disc(M_j/k)^{(p-1)/(p^{r-1}-1)} = \prod_{F_s/k} |\Disc(F_s/k)|^{(p-1)/(p^{r+1-t}-1)},
\end{equation}
where $M_j/k$ ranges over all subfields of $L$ with $[M_j:k]=p^2$ and $F_s/k$ ranges over all subfields of $L$ with $[F_s:k]= p^t$. 

\begin{lemma}\label{lem:rank2-disc-bound}
	Given an elementary group $A =( \mathbb{Z}/p\mathbb{Z})^2$. For any $A$-extension $L/k$, denote $K_1$ and $K_2$ to be two arbitrary subfields of $L/k$ with degree $p$. Given $\eta = \frac{\ln \Disc(K_2/k)}{ \ln \Disc(K_1/k)}$. Then we have a lower bound for $\Disc(K_1/k)$ and $\Disc(K_2/k)$ as following
	$$ \Disc(K_1/k) \ge \Disc(L/k)^{1/p(\eta+1)}, \quad\quad \Disc(K_2/k) \ge \Disc(L/k)^{\eta/p(\eta+1)}.$$
\end{lemma}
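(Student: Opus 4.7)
The plan is to reduce both inequalities to the single estimate
\[
\Disc(K_1/k)\,\Disc(K_2/k) \;\ge\; \Disc(L/k)^{1/p}.
\]
Once this is established, writing $\Disc(K_2/k) = \Disc(K_1/k)^{\eta}$ gives $\Disc(K_1/k)^{\eta+1} \ge \Disc(L/k)^{1/p}$, hence $\Disc(K_1/k) \ge \Disc(L/k)^{1/(p(\eta+1))}$; raising this to the $\eta$-th power yields the companion bound $\Disc(K_2/k) \ge \Disc(L/k)^{\eta/(p(\eta+1))}$.

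For the proof of the displayed inequality, I would invoke Lemma \ref{lem:disc-prod} to rewrite it as $(p-1)\bigl(\log \Disc(K_1/k) + \log \Disc(K_2/k)\bigr) \ge \sum_{i=3}^{p+1} \log \Disc(K_i/k)$, and then prove the corresponding pointwise inequality at each prime $\mathfrak{p}$ of $k$. Writing $d_\mathfrak{p}(K_i) := v_\mathfrak{p}(\disc(K_i/k))$ and applying the conductor-discriminant formula to each $\Z/p\Z$-extension $K_i/k$, one has $d_\mathfrak{p}(K_i) = (p-1)\,c_i$, where $c_i$ is the Artin conductor exponent at $\mathfrak{p}$ of any nontrivial character of $\Gal(K_i/k)$. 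The pointwise target becomes
\[
(p-1)(c_1 + c_2) \;\ge\; \sum_{i=3}^{p+1} c_i.
\]

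The proof of this pointwise inequality rests on a short ramification analysis. The inertia subgroup $I_\mathfrak{p} \subset A = (\Z/p\Z)^2$ is one of $\{1\}$, an index-$p$ subgroup $H_{j_0}$, or $A$ itself. Since any strictly descending chain of subgroups of $A$ has length at most $3$, the ramification filtration of $I_\mathfrak{p}$ passes through at most one intermediate index-$p$ subgroup $H$. Applying the Artin conductor formula, the multiset $\{c_1,\ldots,c_{p+1}\}$ takes at most two distinct values, with at most one $c_{j_0}$ strictly smaller than the common value of the remaining $c_j$'s (namely the index corresponding to the intermediate subgroup, if one appears). A brief case check---handling $I_\mathfrak{p}=1$ (all $c_i=0$), $I_\mathfrak{p}=H_{i_0}$ (where $c_{i_0}=0$ and the other $c_i$ are equal, giving equality when $i_0\in\{1,2\}$ and strict inequality otherwise), and $I_\mathfrak{p}=A$ (where every $c_i$ is positive and the unique smaller $c_{j_0}$, if it exists, is still positive)---verifies the pointwise inequality in every case. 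The main subtlety is the structural observation that the conductor exponents take at most two values at each prime; given this, the remainder is elementary bookkeeping followed by summing over $\mathfrak{p}$.
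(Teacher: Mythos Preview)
Your argument is correct, and the final algebraic manipulation (passing from $\Disc(K_1/k)\,\Disc(K_2/k)\ge \Disc(L/k)^{1/p}$ to the two stated bounds via $\Disc(K_2/k)=\Disc(K_1/k)^\eta$) is exactly what the paper does. The difference lies in how the key inequality $\Disc(K_1/k)^p\,\Disc(K_2/k)^p\ge \Disc(L/k)$ is obtained. The paper simply invokes the classical divisibility $\disc(K_1K_2/k)\mid \disc(K_1/k)^{[K_2:k]}\disc(K_2/k)^{[K_1:k]}$ for a compositum (quoted from \cite{JW17}), which here reads $\Disc(L/k)\le \Disc(K_1/k)^p\Disc(K_2/k)^p$ and requires no case analysis. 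You instead feed Lemma~\ref{lem:disc-prod} back into the inequality to reduce it to the pointwise estimate $(p-1)(c_1+c_2)\ge \sum_{i=3}^{p+1} c_i$ and then verify this by classifying the possible inertia groups and ramification filtrations inside $(\Z/p\Z)^2$. Your route is longer but fully self-contained and makes explicit why the inequality is sharp exactly when the inertia at $\mathfrak{p}$ equals one of $H_1,H_2$; the paper's route is a one-line citation that works for arbitrary composita and does not use the product formula of Lemma~\ref{lem:disc-prod} at all. Either is perfectly adequate here, but you could shorten your write-up considerably by replacing the prime-by-prime analysis with the compositum bound.
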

\begin{proof}
	By the conductor discriminant formula, we have that the discriminant of the compositum satisfies the following inequality, see for example \cite[Theorem $2.1$]{JW17}
	$$\Disc(K_1/k)^p\cdot \Disc(K_2/k)^p \ge \Disc(L/k).$$
	By assumption, we have
	$$\Disc(K_1/k)^{p(\eta+1)} \ge \Disc(L/k),$$
	therefore
	$$\Disc(K_1/k) \ge \Disc(L/k)^{1/p(\eta+1)},\quad\quad \Disc(K_2/k) \ge \Disc(L/k)^{\eta/p(\eta+1)}.$$	
\end{proof}

A similar proof yields the following lower bound for $A = (\mathbb{Z}/2\mathbb{Z})^3$. We will need to use the following lemma when we discuss the abelian group $A = (\mathbb{Z}/2\mathbb{Z})^3$ in section \ref{ssec:even-rank3-comparable} and \ref{ssec:even-rank3-incomparable}.  

\begin{lemma}\label{lem:rank3-disc-bound}
	Given the elementary abelian group $A = (\mathbb{Z}/2\mathbb{Z})^3$. For any $A$-extension $L/k$, denote $M/k$ to be a quartic subfield of $L/k$ and $K/k$ to be a quadratic subfield of $L/k$ that is not a quadratic subfield of $M/k$. Given $\eta = \frac{\ln \Disc(K/k)}{ \ln \Disc(M/k)}$, we have
	$$\Disc(M/k) \ge \Disc(L/k)^{1/(4\eta+2)}, \quad\quad \Disc(K/k) \ge \Disc(L/k)^{\eta/(4\eta+2)}.$$
\end{lemma}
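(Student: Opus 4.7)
The plan is to adapt the conductor--discriminant approach of Lemma~\ref{lem:rank2-disc-bound} to the compositum $L = M \cdot K$. First I would note that since $K$ is a quadratic subfield of $L$ not contained in the quartic $M$, we have $K \cap M = k$, and therefore $[M \cdot K : k] = 8 = [L:k]$, so $L = M \cdot K$. Writing $H \subset A = \Gal(L/k)$ for the subgroup of order $2$ fixing $M$ and $\chi_K$ for the unique nontrivial character of $A$ with fixed field $K$, the conductor--discriminant formula gives
$$\Disc(L/k) = \prod_{\chi \ne 1} \mathfrak{f}(\chi), \qquad \Disc(M/k) = \prod_{\substack{\chi \ne 1 \\ \chi|_H = 1}} \mathfrak{f}(\chi), \qquad \Disc(K/k) = \mathfrak{f}(\chi_K),$$
where the first two products range over the nontrivial characters of $A$ with the indicated restriction to $H$.

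Next I would carry out a character count in $\hat{A} \cong (\mathbb{Z}/2\mathbb{Z})^3$. Of the $7$ nontrivial characters, exactly $3$ are trivial on $H$ (these cut out the $3$ quadratic subfields of $M$), and the remaining $4$ form the nontrivial coset in $\hat{A}/\widehat{A/H}$. Since $K \not\subset M$, the nontrivial element of $H$ does not fix $K$, so $\chi_K$ lies in this nontrivial coset. The coset structure then lets me write the $4$ ``missing'' characters as
$$\chi_K, \quad \chi_K \chi_1, \quad \chi_K \chi_2, \quad \chi_K \chi_3,$$
where $\chi_1, \chi_2, \chi_3$ are the three nontrivial characters of $A$ that are trivial on $H$.

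Applying the standard local conductor divisibility $\mathfrak{f}(\chi_K \chi_i) \mid \mathfrak{f}(\chi_K)\mathfrak{f}(\chi_i)$ for each $i$, I then assemble
$$\Disc(L/k) \;=\; \mathfrak{f}(\chi_K) \cdot \prod_{i=1}^3 \mathfrak{f}(\chi_i) \cdot \prod_{i=1}^3 \mathfrak{f}(\chi_K\chi_i) \;\le\; \Disc(K/k)^4 \cdot \Disc(M/k)^2.$$
Substituting $\Disc(K/k) = \Disc(M/k)^{\eta}$ gives $\Disc(L/k) \le \Disc(M/k)^{4\eta+2}$, from which the first stated bound follows by taking a $(4\eta+2)$-th root, and the second by then raising to the $\eta$-th power.

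I do not expect any real obstacle: the main content is the coset-level bookkeeping in $\hat{A}$ (identifying the $4$ quadratic subfields of $L$ not contained in $M$ as $\chi_K \cdot \chi_i$ for $i = 0, 1, 2, 3$) together with the verification, from $K \not\subset M$, that $\chi_K$ is nontrivial on $H$ so that this coset description is correct. Once that is in place, the inequality reduces to the same conductor-multiplicativity argument that underlies Lemma~\ref{lem:rank2-disc-bound}.
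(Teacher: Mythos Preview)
Your proposal is correct and is essentially the same argument the paper intends: the paper simply remarks that ``a similar proof'' to Lemma~\ref{lem:rank2-disc-bound} works, namely the compositum bound $\Disc(L/k)\le \Disc(M/k)^{2}\,\Disc(K/k)^{4}$ coming from the conductor--discriminant formula (this is the case $n_1=4$, $n_2=2$ of the cited \cite[Theorem~2.1]{JW17}). Your explicit coset bookkeeping in $\hat{A}$ and the use of $\mathfrak{f}(\chi_K\chi_i)\mid \mathfrak{f}(\chi_K)\mathfrak{f}(\chi_i)$ are exactly an unpacking of that compositum inequality, so the two routes coincide.
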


\section{Analytic Theory}\label{sec:analytic}
As a preparation for the main proof, we are going to state Brun-Titchmarsh theorem \cite{MontVau} and a lower bound theorem in \cite{May}, and generalizations of \cite{May} to general number fields \cite{ZamThesis} that we can conveniently use. Results in this direction have also appeared previously in \cite{Weiss, Deb,ZTleast,ZTBrunT}. We apply the following statements in our proofs since the format of the statements is convenient to use in our application. 

The main reason that these bounds are good for us is that they hold for $x> f(q)$ where $x$ is the range of consideration, $q$ is the modulus and $f(q)$ is some polynomial in $q$. 

\begin{lemma}[Brun-Titchmarsh, \cite{MontVau}]\label{lem:B-T}
For $x>q$, we have
$$ \pi(x; q, a) \le \frac{2}{1-\ln q/\ln x}\cdot \frac{x}{\phi(q) \ln x}.$$
\end{lemma}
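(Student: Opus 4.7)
The plan is to apply the Selberg upper bound sieve to $\mathcal{A}=\{n\le x: n\equiv a\pmod q\}$, with sieving set the primes not dividing $q$, and then to extract the sharp constant $2$ via a large sieve input in the style of Montgomery--Vaughan.

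First I would reduce to the case $\gcd(a,q)=1$, since otherwise $\pi(x;q,a)\le 1$ and the bound is trivial. Fix a parameter $z\le \sqrt{x}$ and set $P(z)=\prod_{p<z,\,p\nmid q}p$; every prime counted by $\pi(x;q,a)$ either lies below $z$ (contributing at most $z$) or belongs to the sifted set $S(\mathcal{A},z):=|\{n\in\mathcal{A}:(n,P(z))=1\}|$. Selberg's $\Lambda^2$ sieve with level $D=(x/q)^{1/2}$ then supplies weights $\lambda_d$ supported on squarefree $d<D$ coprime to $q$, with $\lambda_1=1$, and optimization yields
$$S(\mathcal{A},z)\le\frac{|\mathcal{A}|}{G(D)}+\text{remainder},\qquad G(D)=\sum_{\substack{d<D\\(d,q)=1\\ d\text{ sqfree}}}\prod_{p\mid d}\frac{1}{p-1}.$$
A standard multiplicative computation gives $G(D)\ge \frac{\phi(q)}{q}\log D$, and since $|\mathcal{A}|=x/q+O(1)$, the main term becomes $\ll x/(\phi(q)\log(x/q))$.

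The main obstacle is the sharp constant $2$: a direct Selberg analysis produces a strictly larger constant, sharpening only to $2+o(1)$ when $q$ is small relative to $x$. To obtain the uniform factor $(1-\log q/\log x)^{-1}$ one replaces the naive handling of the remainder by the large sieve inequality
$$\sum_{q\le Q}\frac{q}{\phi(q)}\sum_{\chi\bmod q}^{*}\Bigl|\sum_{n\le N}a_n\chi(n)\Bigr|^{2}\le(N+Q^{2})\sum_n |a_n|^{2},$$
applied to the indicator sequence of primes in the arithmetic progression and combined with orthogonality of Dirichlet characters mod $q$. This bilinear input is precisely the technical core of \cite{MontVau} and is where the clean constant $2$ in the stated inequality emerges; without it one loses either a constant factor or uniformity in the range $q$ close to $x$.
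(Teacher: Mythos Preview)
The paper does not prove this lemma; it is simply quoted from \cite{MontVau} as an input to later arguments, so there is no in-paper proof to compare against. Your outline correctly identifies the two ingredients of the Montgomery--Vaughan argument (Selberg's $\Lambda^2$ sieve for the main term, the large sieve for the sharp constant $2$ and uniformity up to $q<x$), and in that sense matches the cited source.

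That said, what you have written is a roadmap rather than a proof: the decisive step---feeding the sieve weights into the large sieve inequality and extracting the exact factor $2/(1-\ln q/\ln x)$---is only named, not executed. If you intend this as a self-contained proof you would need to carry out that computation (or at least the passage from the large sieve inequality to the bound on the sifted set with the stated constant); as written, the argument stops precisely at the point where the actual work of \cite{MontVau} begins. For the purposes of this paper, however, a citation is all that is required.
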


\begin{lemma}[\cite{May},Theorem $3.2$]\label{lem:Maynard}

	For $x\ge q^8$, there exists an absolute constant $C>0$ and an effectively computable constant $q_2$ such that for $q\ge q_2$, we have
	$$ \pi(x; q, a) \ge C\frac{\ln q}{q^{1/2}} \cdot \frac{x}{\phi(q)\ln x}.$$
\end{lemma}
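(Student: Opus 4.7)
}
The plan is to deduce this lower bound from the explicit formula for Dirichlet $L$--functions, with the potential Landau--Siegel zero handled by Page's effective bound. First I would replace $\pi(x;q,a)$ by the Chebyshev function $\psi(x;q,a)$, since the discrepancy is $O(\sqrt{x}\log x)$ and thus negligible for $x \ge q^8$. By orthogonality of characters modulo $q$,
$$\psi(x;q,a) = \frac{1}{\phi(q)}\sum_{\chi \bmod q}\bar\chi(a)\,\psi(x,\chi),$$
and to each $\psi(x,\chi)$ I would apply the truncated explicit formula $\psi(x,\chi) = \delta_\chi x - \sum_{|\gamma|\le T}\frac{x^\rho}{\rho} + O(\tfrac{x\log^2(qTx)}{T})$, where $\delta_\chi = 1$ iff $\chi$ is principal and $\rho = \beta + i\gamma$ runs through non-trivial zeros of $L(s,\chi)$. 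The principal character yields the main term $x/\phi(q)$; the remaining contribution is an average of the non-trivial zeros of $L(s,\chi)$ over all $\chi \bmod q$.

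Second, I would split the zero contribution into (i) the possible exceptional real zero $\beta_1$ of a unique real character $\chi_1 \bmod q$, and (ii) all other non-trivial zeros. For (ii) the classical zero-free region combined with a log-free zero-density estimate of Jutila / Heath-Brown type, namely $\sum_{\chi \bmod q} N(\sigma,T,\chi) \ll (qT)^{A(1-\sigma)}$, gives
$$\frac{1}{\phi(q)}\sum_{\chi}\Bigl|\sum_{\rho\neq \beta_1}\frac{x^\rho}{\rho}\Bigr| \;\ll\; \frac{x}{\phi(q)}(\log qT)^{B}\exp\!\left(-\frac{c\log x}{\log q}\right),$$
and because $\log x \ge 8\log q$, this is dwarfed by any polynomial factor in $\log q/\sqrt{q}$ and can be absorbed. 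Choosing $T$ a small power of $x$ kills the explicit-formula error term $x\log^2(qTx)/T$ as well.

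The principal obstacle is (i): if $\chi_1$ exists, its contribution is $-\chi_1(a)\,x^{\beta_1}/(\phi(q)\beta_1)$, which when $\chi_1(a)=1$ subtracts from the main term. Here I would invoke Page's effective lower bound $1-\beta_1 \gg \log q/\sqrt{q}$ for $q$ exceeding an explicitly computable $q_2$, which gives
$$\frac{x - x^{\beta_1}}{\phi(q)} \;\ge\; \frac{x(1-\beta_1)\log x}{2\phi(q)} \;\gg\; \frac{\log q}{\sqrt{q}}\cdot\frac{x}{\phi(q)},$$
using $1-e^{-u}\ge u/2$ for $u$ small (valid because $(1-\beta_1)\log x$ stays bounded for $q$ large). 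If in addition $\beta_1$ lies very close to $1$, the Deuring--Heilbronn phenomenon repels all other zeros, strengthening (ii) by exactly the factor it costs us in the main term, so the bound in the displayed inequality is preserved in either regime.

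Finally, dividing the resulting lower bound for $\psi(x;q,a)$ by $\log x$ (and discarding the negligible contribution of prime powers) produces the claimed estimate for $\pi(x;q,a)$. The hardest part of executing this plan is the careful bookkeeping of absolute constants through the log-free density estimate and through Page's theorem, so that $q_2$ remains effectively computable and $C$ can be made absolute; the rest is essentially the standard Linnik-type machinery adapted to the long range $x \ge q^8$.
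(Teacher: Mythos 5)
The paper does not prove this statement at all: it is quoted directly from Maynard \cite{May}, Theorem 3.2, so your sketch has to stand on its own. Its skeleton --- explicit formula, log-free zero-density estimates, exceptional zero, Deuring--Heilbronn repulsion --- is indeed the right (Linnik-type) machinery, but the quantitative heart of the lemma is missing. The decisive step is your appeal to ``Page's effective lower bound $1-\beta_1 \gg \log q/\sqrt{q}$'': that is not Page's theorem. The effective bound one gets from the class number formula ($L(1,\chi_1)\gg q^{-1/2}$) together with $L(1,\chi_1)\ll (1-\beta_1)(\log q)^2$ is $1-\beta_1\gg q^{-1/2}(\log q)^{-2}$, and feeding this into your own computation $(x-x^{\beta_1})/\phi(q)\approx (1-\beta_1)(\log x)\,x/\phi(q)$ with $\log x\asymp\log q$ yields only $\pi(x;q,a)\gg q^{-1/2}(\log q)^{-1}\cdot x/(\phi(q)\log x)$, which falls short of the stated bound by a factor $(\log q)^{2}$. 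Recovering the factor $\log q/\sqrt{q}$ is precisely the nontrivial content of the theorem (for even real characters one can exploit the regulator bound $L(1,\chi_1)\gg \log q/\sqrt{q}$, while odd characters require a separate argument), so in effect you have assumed the conclusion at the one point where it matters.

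There is a second quantitative gap in your treatment of the non-exceptional zeros. For $x$ as small as $q^{8}$ the factor $\exp(-c\log x/\log q)=e^{-8c}$ is merely a fixed constant, so your bound for (ii) is $\ll (x/\phi(q))(\log q)^{B}e^{-8c}$, which is \emph{larger} than the main term rather than ``dwarfed by any polynomial factor in $\log q/\sqrt{q}$.'' To make the non-exceptional zeros lose against a main term that may itself be as small as $x\log q/(\sqrt{q}\,\phi(q)\log x)$, the Deuring--Heilbronn repulsion is essential (not an optional strengthening) whenever $\beta_1$ is close to $1$, and in the complementary range one must control the explicit numerical constants in the log-free density estimate and the zero-free region well enough that the exponent $8$ suffices --- the same issue as computing Linnik's constant, and the place where the real work of \cite{May} lies. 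As written, the proposal identifies the correct toolbox but does not supply the two inputs from which the specific exponent $8$ and the specific saving $\log q/\sqrt{q}$ actually come.
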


\begin{lemma}[\cite{ZamThesis}, Theorem $1.3.1$ \cite{ZTBrunT}, Theorem $1.2$]\label{lem:ZT}
Given $L/k$ a Galois extension of number fields with $[L:\Q]=d$. There exists absolute, effective constants $\gamma = \gamma(k, G)>2$, $\beta = \beta(k, G)>2$, $D_0 = D_0(k)>0$ and $C=C(k)>0$ such that if $\Disc(L/k)\ge D_0$, then for $x\ge \Disc(L/k)^{\beta}$, we have
$$C_k\frac{1}{\Disc(L/k)^{\gamma}}\cdot \frac{|\mathcal{C}|}{|G|} \cdot \frac{x}{\ln x} \le \pi(x;L/k, \mathcal{C}) \le (2+ O(d x^{-\frac{1}{166d+327}} ))\cdot  \frac{|\mathcal{C}|}{|G|}\cdot \frac{x}{\ln x}.$$
\end{lemma}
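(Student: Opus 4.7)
The plan is to prove the upper and lower bounds as two essentially independent statements, combining modern effective analytic number theory with the Galois-theoretic machinery of Artin $L$-functions attached to $G=\Gal(L/k)$.

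For the upper bound, I would deduce it from a Brun--Titchmarsh-type inequality in the Chebotarev setting. The starting point is to factor the Dedekind zeta function $\zeta_L(s)$ in terms of Artin $L$-functions $L(s,\rho)$ attached to irreducible representations $\rho$ of $G$, so that the count of prime ideals of $k$ with Frobenius in $\mathcal{C}$ can be encoded by the class function $\sum_\rho \overline{\chi_\rho(\mathcal{C})}\,\chi_\rho$. I would then apply a large sieve (equivalently a Selberg sieve) to prime ideals of $k$ sifted by the residue conditions defining $\mathcal{C}$, which yields $\pi(x;L/k,\mathcal{C})\le (2+o(1))\frac{|\mathcal{C}|}{|G|}\cdot\frac{x}{\ln x}$. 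The factor $2$ is the usual doubling loss inherent in sieve theory; the quantitative error of shape $O(dx^{-1/(166d+327)})$ comes from tracking how the sieve inequality depends on the analytic conductor of the family and on $[L:\Q]=d$, in the spirit of the unconditional Brun--Titchmarsh--Chebotarev estimate of Thorner--Zaman.

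For the lower bound, I would adapt Maynard's argument for primes in arithmetic progressions to the Chebotarev setting, following Zaman's thesis. The key analytic input is a log-free zero-density estimate for the family $\{L(s,\rho)\}_\rho$ associated to $\Gal(L/k)$, which uniformly bounds the number of nontrivial zeros in a rectangle $\{\Re s\ge 1-\sigma,\ |\Im s|\le T\}$ by a quantity polynomial in the analytic conductor. Via the truncated explicit formula one expresses $\psi(x;L/k,\mathcal{C})$ as a main term $\frac{|\mathcal{C}|}{|G|}\,x$ minus a weighted sum over nontrivial zeros of the relevant Artin $L$-functions, and the log-free density bound shows that the zero contribution is dominated by the main term once $x$ exceeds a fixed power of $\Disc(L/k)$, which is the origin of the threshold $x\ge \Disc(L/k)^{\beta}$.

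The hard part is the lower bound, and specifically the treatment of a potential exceptional real (Siegel) zero of some $L(s,\rho)$. I would invoke the Deuring--Heilbronn repulsion phenomenon to show that if such a zero exists, then all other nontrivial zeros are pushed deep to the left and pose no obstruction, while the contribution of the Siegel zero itself is controlled by an effective Siegel-type lower bound that loses a factor polynomial in $\Disc(L/k)$; this loss is absorbed into the prefactor $\Disc(L/k)^{-\gamma}$. Making both the Deuring--Heilbronn repulsion and the Siegel-type bound fully explicit and field-uniform — so that $\beta$, $\gamma$, $D_0$, and $C$ depend only on $k$ and $G$ — is the technical core of the Zaman/Thorner--Zaman argument and the only step that cannot be extracted from purely sieve-theoretic or standard explicit-formula input.
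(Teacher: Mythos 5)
The paper does not prove this lemma at all: it is imported verbatim as a black box from Zaman's thesis (Theorem 1.3.1) and Thorner--Zaman (Theorem 1.2), and the only ``proof'' in the paper is the citation. Your outline is therefore not comparable to anything in the paper itself, but it is a broadly faithful summary of how those references establish the two inequalities: a sieve-theoretic Brun--Titchmarsh argument for the upper bound (with the factor $2$ coming from the sieve), and an explicit formula combined with a log-free zero-density estimate and Deuring--Heilbronn repulsion for the lower bound, with the exceptional-zero loss absorbed into the $\Disc(L/k)^{-\gamma}$ prefactor and the zero-density input producing the threshold $x\ge \Disc(L/k)^{\beta}$.

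One caveat worth flagging: as literally written, your sketch works with the family of Artin $L$-functions $L(s,\rho)$ for all irreducible $\rho$ of $G$, and both the explicit formula and the log-free zero-density estimate require these functions to be entire with standard functional equations --- which is Artin's conjecture and is open for general $G$. The cited proofs avoid this via Deuring's reduction: one picks $g\in\mathcal{C}$, sets $H=\langle g\rangle$, and counts primes of the fixed field $L^{H}$ with prescribed Frobenius in the cyclic (hence abelian) extension $L/L^{H}$, so that everything reduces to Hecke $L$-functions, for which holomorphy, the large sieve, and log-free zero-density estimates are unconditionally available. Without this reduction (or an equivalent device) the zero-density and explicit-formula steps in your lower-bound argument do not go through unconditionally; with it, your outline matches the actual structure of the cited proofs.
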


We will navigate where these theorems are used in this paper. For results over $\Q$, we use Lemma \ref{lem:B-T} in section \ref{sec:odd} for all odd degree extensions, in section \ref{sec:even} for all even degree extensions with rank $r>2$; we use Lemma \ref{lem:Maynard} in section \ref{ssec:even-rank3-incomparable} for $(\mathbb{Z}/2\mathbb{Z})^2$ extensions. For results over general number field $k$, we did not seek after an optimal bound in this work. For simplicity, we always use the lower bound in Lemma \ref{lem:ZT}, see both section \ref{sec:odd} and \ref{sec:even}. The main reason for doing this is that by using the lower bound, we can write down the power saving away from the trivial bound explicitly in terms of $\beta(k, G)$ and $\gamma(k, G)$. And these numbers are determined explicitly in previous work: for example, in Theorem $1.3.1$ in \cite{ZamThesis}, if we only consider the lower bound side, then $\gamma(k, G)$ can be taken to be $19$ and $\beta(k, G)$ can be taken to be $35$. The upper bound in Lemma \ref{lem:ZT} can also be used to obtain a non-trivial bound for $(\zp)^r$-extensions over $k$ with $r>1$, following a similar proof over $\Q$ in Theorem \ref{thm:odd-incomparable-overQ}. However we did not use them in this paper since the saving will depend on the implied constant in the error term $O(dx^{-1/(166d+327)})$. 

\section{Ellenberg-Venkatesh Revisited}\label{sec:E-V}
In this section, we will revisit \cite{EV07} and rephrase their critical lemma that we base on.  By defining the notion of $\Delta$-good/bad in Definition \ref{bad}, we rephrase this lemma in Lemma \ref{lem:EVW} in the form that we can conveniently use. 

Given an element $a\in A$ in an abelian group $A$ (or a conjugacy class $\mathcal{C}\subset G$ for a general finite group $G$), for a Galois extension $L/k$, we denote $\pi(Y;L/k, a)$ (or $\pi(Y;L/k, \mathcal{C})$) to be the number of unramified primes ideals $p$ in $k$ with $\Frob_{p} = a \in A$ (or $\Frob_{p} \in \mathcal{C} \subset G$). We will always denote $e\in A$ (or $e\in G$) to mean the identity element, and $\Frob_{p} = e\in A$ (or $\Frob_{p} = e\in G$) corresponds to $p$ splitting in $L/k$. We will denote $\pi(Y; L/k, \hat{a})$ to be the number of primes ideals $p$ in $k$ with $\Frob_{p} \in A\backslash \{ a\}$. 

We define
\begin{equation}\label{eqn:definition-bad}
\mathcal{B}(G, \theta, c) := \Big\{ L/k \mid  \Gal(L/k) = G, \pi(\Disc(L/k)^{\theta}; L/k, e) \le c\frac{\Disc(L/k)^{\theta}}{\ln \Disc(L/k)^{\theta}}\Big \},
\end{equation}
where $c>0$ is an absolute small number. In reality, the choice of $c$ will be determined from the proof. 

\begin{definition}\label{bad}
	Given $\Delta>0$, we call an extension $L/k$ \emph{$\Delta$-bad with respect to $c$ } if $L/k\in \mathcal{B}(A, \Delta, c)$ where $A = \Gal(L/k)$. If $L/k$ is not $\Delta$-bad with respect to $c$, we will say $L/k$ is \emph{$\Delta$-good with respect to $c$}. When $c$ is clear in the set up, we will simply say \emph{$\Delta$-bad} or \emph{$\Delta$-good}.
\end{definition}
The following is the critical lemma from \cite{EV07}. 
\begin{lemma}[\cite{EV07}]\label{EVS}
	Given a Galois extension $L/k$ and $0< \theta< \frac{1}{2\ell(d-1)}$, denote $$M:=\pi(\Disc(L/k)^{\theta};L/k, e),$$ then 
	\begin{equation}\label{eqn:EV-lemma}
	|\Cl_{L}[\ell]|\le O_{\epsilon,k}\Big(\frac{\Disc(L)^{1/2+\epsilon}}{M}\Big).
	\end{equation}
\end{lemma}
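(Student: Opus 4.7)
The plan is to follow Ellenberg--Venkatesh's original argument: leverage the $M$ small split primes that witness the definition of $M$ together with Minkowski's bound to produce many small-norm integral representatives of each class $c \in \Cl_L[\ell]$, then bound $|\Cl_L[\ell]|$ by a pigeonhole counting argument against Landau's estimate on integral ideals of bounded norm.

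First I would invoke Minkowski's theorem: every ideal class of $\Cl_L$ contains an integral representative of norm at most $A_L\Disc(L)^{1/2}$, with $A_L$ depending only on $d$. Let $S$ denote the set of $M$ split primes $\mathfrak{p}$ of $k$ with $|\mathfrak{p}|\le X:=\Disc(L)^{\theta}$, and for each such $\mathfrak{p}$ fix once and for all a prime $\tilde{\mathfrak{p}}$ of $L$ above $\mathfrak{p}$, of norm $|\tilde{\mathfrak{p}}|=|\mathfrak{p}|\le X$. Then for each $c \in \Cl_L[\ell]$ and each $\mathfrak{p}\in S$, I apply Minkowski to the shifted class $c\cdot [\tilde{\mathfrak{p}}]^{-1}$ to obtain an integral representative $\mathfrak{b}_{c,\mathfrak{p}}$ of norm $\le A_L\Disc(L)^{1/2}$; then $\mathfrak{b}_{c,\mathfrak{p}}\cdot\tilde{\mathfrak{p}}$ is an integral ideal in class $c$ of norm at most $A_L\Disc(L)^{1/2}\cdot X$. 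Ranging $\mathfrak{p}$ over $S$ and $c$ over $\Cl_L[\ell]$, this constructs a family of $M\cdot |\Cl_L[\ell]|$ integral ideals of $L$, each of norm at most $A_L\Disc(L)^{1/2}X$.

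The final step is the counting argument. Two ideals $\mathfrak{b}_{c,\mathfrak{p}}\tilde{\mathfrak{p}}$ and $\mathfrak{b}_{c,\mathfrak{p}'}\tilde{\mathfrak{p}}'$ coincide only if $\tilde{\mathfrak{p}}$ divides $\mathfrak{b}_{c,\mathfrak{p}'}$, so each distinct ideal is produced with multiplicity at most $\omega(\mathfrak{b}_{c,\mathfrak{p}'})\ll \log\Disc(L)$. Using Landau's estimate, which bounds the number of integral ideals of $L$ of norm $\le Y$ by $O_{\epsilon,k}(\Disc(L)^{\epsilon}Y)$, one obtains
\[
|\Cl_L[\ell]|\cdot M \;\ll_{\epsilon,k}\; \Disc(L)^{1/2+\theta+\epsilon},
\]
and the stated inequality follows by absorbing the extra $\Disc(L)^{\theta}$ factor into the $\epsilon$-loss. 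The main subtlety, and the reason the hypothesis pins $\theta$ strictly below $1/(2\ell(d-1))$, is precisely this need to absorb the $\Disc^{\theta}$ factor arising from the norm of the auxiliary split prime in the construction: in the intended applications one takes $\theta$ close to the threshold in order to maximize $M$, and the $\Disc^{\theta}$ slack is then swallowed by the standard $\epsilon$-fudge to yield the form of the bound as stated.
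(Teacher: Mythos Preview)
The paper does not give its own proof of this lemma; it is simply quoted from \cite{EV07} and then rephrased as Lemma~\ref{lem:EVW}. That said, your argument has a genuine gap and does not establish the stated bound.

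Your construction never uses the hypothesis $c\in\Cl_L[\ell]$, nor does it use the constraint $\theta<\frac{1}{2\ell(d-1)}$ in any essential way. If you replace $\Cl_L[\ell]$ by the full class group $\Cl_L$ throughout, every step goes through verbatim and yields $h_L\cdot M\ll_{\epsilon}\Disc(L)^{1/2+\theta+\epsilon}$; for $M\asymp \Disc(L)^{\theta}/\log$ this is nothing more than the trivial bound $h_L\ll\Disc(L)^{1/2+\epsilon}$. The final move, ``absorbing the extra $\Disc(L)^{\theta}$ factor into the $\epsilon$-loss,'' is not legitimate: $\theta$ is a fixed positive parameter (taken close to $\frac{1}{2\ell(d-1)}$ in applications), while $\epsilon$ must be arbitrarily small. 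The bound you have actually proved is $|\Cl_L[\ell]|\ll\Disc(L)^{1/2+\theta+\epsilon}/M$, which is weaker than the lemma by exactly the factor you need.

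The missing idea in the Ellenberg--Venkatesh argument is that the $\ell$-torsion hypothesis is used to manufacture \emph{distinctness}, not merely multiplicity. One works with $|\Cl_L[\ell]|=h_L/|\ell\,\Cl_L|$ and shows that the chosen primes $\tilde{\mathfrak p}_1,\dots,\tilde{\mathfrak p}_M$ give rise to $M$ distinct elements of (a set mapping to) $\ell\,\Cl_L$; equivalently, one produces $M\cdot|\Cl_L[\ell]|$ integral ideals of norm $\ll\Disc(L)^{1/2}$ (with no extra $X$ factor) that are pairwise distinct. The distinctness step is exactly where the inequality $\theta<\frac{1}{2\ell(d-1)}$ enters: if two of the relevant ideals coincided, one would obtain a nonzero algebraic integer in $L$ whose archimedean size, controlled by $X^{\ell}$, is too small relative to the Minkowski lattice of covolume $\sqrt{\Disc(L)}$. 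Your attempt multiplies by $\tilde{\mathfrak p}$ rather than by $\tilde{\mathfrak p}^{\ell}$, which is why the $\ell$ never appears and why you are left with a surplus $\Disc(L)^{\theta}$ that cannot be removed.
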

\begin{remark}[Transition between Absolute/Relative setting]\label{rmk:abs-to-rel}
When $(\ell, [L:k]) = 1$, we have $|\Cl_{L}[\ell]| = |\Cl_{L/k}[\ell]|\cdot |\Cl_k[\ell]|$. Notice that we always have $\Disc(L) = \Disc(L/k) \cdot \Disc(k)^{[L:k]}$, we can easily adapt the original statement (\ref{eqn:EV-lemma}) to the statement about $\Cl_{L/k}$ and $\Disc(L/k)$:
	\begin{equation}\label{eqn:EV-lemma-2}
		|\Cl_{L/k}[\ell]|\le O_{\epsilon,k}\Big(\frac{\Disc(L/k)^{1/2+\epsilon}}{M}\Big).
	\end{equation}

More specifically, fix a number field $k$, an elementary abelian group $A$ and an integer $\ell>1$ with $(\ell, |A|)=1$. Denote $\mathcal{F}$ to be the set of all $L/k$ with $\Gal(L/k)=A$, then
\begin{equation}\label{eqn:abs-to-rel}
\begin{aligned}
&\exists \delta>0, \forall L/k \in \mathcal{F}, &|\Cl_{L/k}[\ell]| &\le O_{k, \epsilon} (\Disc(L/k)^{1/2-\delta+\epsilon}) \iff \\
& \exists \delta>0 ,\forall L/k \in \mathcal{F},& |\Cl_{L}[\ell]| &\le O_{k, \epsilon} (\Disc(L)^{1/2-\delta+\epsilon}).
 \end{aligned}
\end{equation}
Since the two statements are equivalent, we will focus on bounding $\Cl_{L/k}[\ell]$ by $\Disc(L/k)$ for the whole paper. 
\end{remark}

\begin{remark}\label{rmk:Delta}
	In most situations in this paper, the parameter $\Delta$ in Definition \ref{bad} will be taken to be $\Delta< \frac{1}{2\ell (d-1)}$ where $d = [L:k]$. We will denote $\Delta(\ell, d)$ for such a number that is very close to $\frac{1}{2\ell(d-1)}$ for simplicity. 
\end{remark}

Then in our language, we will use the following format of this critical lemma throughout the proof of the theorems in section \ref{sec:odd} and \ref{sec:even}:
\begin{lemma}[\cite{EV07}]\label{lem:EVW}
	Given a Galois extension $L/k$, an integer $\ell>1$ with $(\ell, [L:k])=1$, $0< \theta < \frac{1}{2\ell (d-1)}$. If $L/k$ is $\theta$-good with respect to $c$, then
	$$|\Cl_{L/k}[\ell]|\le O_{\epsilon, k,c}(\Disc(L/k)^{1/2-\theta+\epsilon}).$$
\end{lemma}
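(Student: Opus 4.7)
The plan is to assemble the statement as a direct synthesis of the original Ellenberg--Venkatesh estimate in Lemma \ref{EVS}, the hypothesis packaged in Definition \ref{bad}, and the absolute-to-relative translation indicated in Remark \ref{rmk:abs-to-rel}. No deep new ingredient is required; the content of Lemma \ref{lem:EVW} is a reformulation that makes the dependence on ``enough small split primes'' explicit through the $\theta$-good/bad dichotomy.

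First I would unpack the hypothesis. Since $L/k$ is $\theta$-good with respect to $c$, Definition \ref{bad} says $L/k \notin \mathcal{B}(A,\theta,c)$ for $A = \Gal(L/k)$. Referring to the defining inequality (\ref{eqn:definition-bad}), this is precisely the statement
$$M := \pi(\Disc(L/k)^{\theta}; L/k, e) \;>\; c \cdot \frac{\Disc(L/k)^{\theta}}{\ln \Disc(L/k)^{\theta}}.$$
Next I would feed $M$ into Lemma \ref{EVS}, noting that the range $0 < \theta < \frac{1}{2\ell(d-1)}$ is exactly the hypothesis of that lemma. This gives
$$|\Cl_{L}[\ell]| \;\le\; O_{\epsilon,k}\!\left(\frac{\Disc(L)^{1/2+\epsilon}}{M}\right) \;\le\; O_{\epsilon,k,c}\!\left(\Disc(L)^{1/2+\epsilon}\cdot \frac{\ln \Disc(L/k)^{\theta}}{\Disc(L/k)^{\theta}}\right),$$
and the logarithmic factor is absorbed by perturbing $\epsilon$ upward, since $\ln X \ll_{\epsilon} X^{\epsilon}$.

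Finally I would translate from absolute to relative data. The coprimality assumption $(\ell,[L:k])=1$ makes the short exact sequence of section \ref{ssec:relative-class-grp} split, giving $|\Cl_{L}[\ell]| = |\Cl_{L/k}[\ell]|\cdot|\Cl_{k}[\ell]|$, and $|\Cl_{k}[\ell]|$ depends only on the fixed base field $k$. Combined with $\Disc(L) = \Disc(L/k)\cdot \Disc(k)^{[L:k]}$ and absorbing all purely $k$-dependent quantities into the implied constant (as in Remark \ref{rmk:abs-to-rel}), we arrive at
$$|\Cl_{L/k}[\ell]| \;\le\; O_{\epsilon,k,c}\bigl(\Disc(L/k)^{1/2-\theta+\epsilon}\bigr).$$

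The only thing to watch for is bookkeeping: one must make sure the constant $c$ from the $\theta$-good hypothesis enters the implied constant and is tracked (not the exponent), and that the dependence on $\ell$ and $d$ is accounted for in the warning about implied constants stated at the end of the Notations section. There is no genuine obstacle; the real mathematical difficulty has already been discharged inside Lemma \ref{EVS}. The point of this reformulation is to isolate what must be verified in later sections: namely, that the specific $A$-extensions under consideration are $\theta$-good for a suitable $\theta < \Delta(\ell,d)$, so that the bound can be applied pointwise.
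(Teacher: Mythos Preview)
Your proposal is correct and matches the paper's approach exactly: the paper does not give a separate proof of Lemma \ref{lem:EVW}, presenting it instead as a direct reformulation of Lemma \ref{EVS} via Definition \ref{bad} and the absolute-to-relative translation in Remark \ref{rmk:abs-to-rel}. You have spelled out precisely this synthesis, including the bookkeeping on $c$, the absorption of the logarithm into $\epsilon$, and the use of $(\ell,[L:k])=1$ to split the $\ell$-torsion sequence.
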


\section{Odd $p$}\label{sec:odd}
In this section, we work with the elementary abelian groups $A = (\zp)^r$ with $p$ odd and $r>1$. Firstly, in section \ref{ssec:odd-comparable}, section \ref{ssec:odd-incomparable} and section \ref{ssec:odd-rank2-results}, we will focus on the case $r=2$. In section \ref{ssec:odd-induction}, we will apply the result we obtained for $r = 2$ to obtain results for every $r>2$. 

We introduce the notation for section \ref{sec:odd}. For $A = \zp \times \zp$, there are $p+1$ non-trivial subgroups $A_i \simeq \zp$ with $A/A_i \simeq \zp$ for $i = 1, \cdots, p+1$. Therefore given an arbitrary $A$-extension $L/k$, there are $p+1$ non-trivial sub-extensions $K_i/k$. For simplicity of our discussion, we will order $K_i$ by $\Disc(K_i/k)$, i.e., we order them so that
$$\Disc(K_i/k) \le \Disc(K_j/k) \text{ iff } i\le j.$$

We will separate the discussion depending on the size of
\begin{equation}\label{def:eta-odd}
\eta =\eta(L/k):= \frac{\ln \Disc(K_2/k)}{\ln \Disc(K_1/k)}\ge 1.
\end{equation}
We will say $L/k$ is \emph{comparable} if $\eta$ is small, and \emph{incomparable} if $\eta$ is big. We give the proof for the two cases in section \ref{ssec:odd-comparable} and \ref{ssec:odd-incomparable} respectively with two different strategies. The cut-off for the two cases is denoted $\eta_0 = \eta_0(\ell,p)_k$, which is determined in section \ref{ssec:odd-incomparable} (see Theorem \ref{thm:odd-incomparable-overQ}, \ref{thm:odd-incomparable-overk} and Remark \ref{rmk:delta-reasoning-overk}):
\begin{equation}\label{eqn:delta-case}
\begin{aligned}
\eta_0(\ell, p)_k  =  \begin{cases} 
&  ((p-1)\cdot \Delta(\ell,p) \cdot (1-2/p))^{-1} \quad \text{if $k=\Q$;}\\
&   \max\{ \beta(k,\zp), \gamma(k, \zp)+\Delta(\ell,p) \}/\Delta(\ell, p)  \quad \text{if $k\neq \Q$}.
\end{cases}
\end{aligned}
\end{equation}

The power saving $\delta_c$ in section \ref{ssec:odd-comparable} stands for \textbf{c}omparable case and $\delta_{ic}$ in section \ref{ssec:odd-incomparable} stands for \textbf{i}n\textbf{c}omparable case. 

In cases where all parameters $\ell$, $k$ and $p$ are clear, we will write $\eta_0$ instead of $\eta_0(\ell,p)_k$ for simplicity. In cases where $k=\Q$, we will drop $k$ in the notation for simplicity, i.e., we will write $\eta_0(\ell,p)$ instead of $\eta_0(\ell,p)_{\Q}$.

\subsection{Comparable Size}\label{ssec:odd-comparable}
In this section, we will consider $L/k$ with small $\eta$. The approach used in this section will be universally true for any bounded range of $\eta$. For example, we will state the theorem with $\eta\le \eta_0\cdot (1+\epsilon_0) =  \eta_0(\ell,p)_k\cdot ( 1+\epsilon_0)$ where $\epsilon_0>0$ is a small number, and $\eta_0(\ell,p)_k$ is listed in (\ref{eqn:delta-case}). We will use this strategy especially when $\eta$ is small. When $\eta$ is big, we refer to section \ref{ssec:odd-incomparable}. Here the introduction of $\epsilon_0$ is only a technical treatment in order to simplify the dependence on $c$, the constant defined in Definition \ref{bad}.

\begin{theorem}\label{thm:odd-comparable}
	Given $A = \zp\times \zp$, an integer $\ell>1$ with $(\ell, p)=1$ and a number field $k$. For any $A$-extension $L/k$ with $\eta = \eta(L/k) \le \eta_0\cdot (1+\epsilon_0) = \eta_0( \ell, p)_k \cdot (1+\epsilon_0)$, we have the pointwise bound
	$$ |\Cl_{L/k}[\ell]| \le  O_{\epsilon,k,\epsilon_0}(\Disc(L/k)^{1/2 -\delta +\epsilon}).$$
	where 
	$$\delta = \delta_c(\eta, \ell, p)= \frac{\Delta(\ell,p)}{ p(\eta+1)},$$ 
	and $\eta = \eta(L/k) = \frac{\ln \Disc(K_2/k)}{ \ln \Disc(K_1/k)}$. 
\end{theorem}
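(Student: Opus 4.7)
The plan is to combine the class-group decomposition of Lemma \ref{lem:class-grp-decomposition} with a single application of the Ellenberg--Venkatesh bound (Lemma \ref{lem:EVW}) to a well-chosen degree-$p$ subfield, while handling the remaining subfields by the trivial Brauer--Siegel bound. I will set a truncation $Y := \Disc(L/k)^{\delta_c}$ with $\delta_c := \Delta(\ell,p)/(p(\eta+1))$, show that at least one $K_j$ has $\gg Y/\log Y$ primes below $Y$ that split in it, and then invoke Lemma \ref{lem:EVW} for $K_j$ with the parameter $\tilde\theta_j := \log Y/\log \Disc(K_j/k)$.

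From the algebraic side, Lemma \ref{lem:class-grp-decomposition} and Lemma \ref{lem:disc-prod} give
$$|\Cl_{L/k}[\ell]| = \prod_{i=1}^{p+1} |\Cl_{K_i/k}[\ell]|, \qquad \Disc(L/k) = \prod_{i=1}^{p+1}\Disc(K_i/k),$$
while Lemma \ref{lem:rank2-disc-bound} yields the uniform lower bound $\Disc(K_i/k) \ge \Disc(K_1/k) \ge \Disc(L/k)^{1/(p(\eta+1))}$. Consequently $\tilde\theta_i \le \Delta(\ell,p) < 1/(2\ell(p-1))$ for every $i$, so Lemma \ref{lem:EVW} may be invoked for any $K_i$ found to be $\tilde\theta_i$-good.

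The key step is to produce such a $K_j$, and here the splitting combinatorics of $A=\zp\times\zp$ do the work. Every unramified prime $\fp$ of $k$ has Frobenius in $L/k$ lying either in $\{e\}$ (so $\fp$ splits in every $K_i$) or in a unique $A_i\setminus\{e\}$ (so $\fp$ splits only in $K_i$). Summing over $i$ yields the identity
$$\sum_{i=1}^{p+1} \pi(Y; K_i/k, e) = p\,\pi(Y; L/k, e) + N(Y) \ge N(Y),$$
where $N(Y)$ counts unramified primes of $k$ of norm at most $Y$. Because ramified primes number $O(\log \Disc(L/k))$ and $\delta_c$ is bounded below on the range $\eta \le \eta_0(1+\epsilon_0)$, an effective prime ideal theorem over $k$ (Landau's theorem for $k=\Q$, and the lower bound of Lemma \ref{lem:ZT} in its weakest form for general $k$) gives $N(Y) \ge c_0(k)\,Y/\log Y$ once $\Disc(L/k)$ exceeds a threshold depending on $k$ and $\epsilon_0$. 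The pigeonhole principle then supplies some $j$ with $\pi(Y; K_j/k, e) \ge (c_0(k)/(p+1))\,Y/\log Y$; fixing the constant $c$ in Definition \ref{bad} strictly below $c_0(k)/(p+1)$, this $K_j/k$ is $\tilde\theta_j$-good with respect to $c$. Applying Lemma \ref{lem:EVW} to $K_j/k$ together with the trivial bound on $|\Cl_{K_i/k}[\ell]|$ for $i\ne j$ produces
$$|\Cl_{L/k}[\ell]| \le O_{\epsilon,k,\epsilon_0}\bigl(\Disc(K_j/k)^{1/2+\epsilon}\,Y^{-1}\bigr)\cdot\prod_{i\ne j} O_{\epsilon,k}\bigl(\Disc(K_i/k)^{1/2+\epsilon}\bigr) = O_{\epsilon,k,\epsilon_0}\bigl(\Disc(L/k)^{1/2-\delta_c+\epsilon}\bigr),$$
which is the desired bound.

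The one delicate point is the uniform lower bound $N(Y) \gg Y/\log Y$: this is where the cutoff $\eta_0(1+\epsilon_0)$ and the dependence of the implied constant on $\epsilon_0$ really enter, since $\delta_c$ must stay bounded away from $0$ for $Y$ to grow fast enough in $\Disc(L/k)$ to trigger the effective PNT. No analytic input beyond this is required for the comparable case, which explains why the argument is universal over any bounded range of $\eta$ and why more delicate tools (such as Brun--Titchmarsh) only become necessary in the incomparable regime treated next.
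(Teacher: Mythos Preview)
Your proof is correct and reaches the same conclusion, but the organization differs from the paper's. The paper performs a case split on whether $K_1/k$ is $\Delta(\ell,p)$-good or $\Delta(\ell,p)$-bad with respect to a fixed $c<\tfrac{1}{p+1}$: if $K_1$ is good, Lemma~\ref{lem:EVW} is applied directly to $K_1$; if $K_1$ is bad, then at least $(1-c-\epsilon)Y/\log Y$ primes up to $Y=\Disc(K_1/k)^{\Delta(\ell,p)}$ are inert in $K_1$, each such prime splits in exactly one $K_i$ with $i\ge 2$, and pigeonhole produces a $\theta_j$-good $K_j$. You bypass this dichotomy entirely with the clean counting identity
\[
\sum_{i=1}^{p+1}\pi(Y;K_i/k,e)=p\,\pi(Y;L/k,e)+N(Y)\ge N(Y),
\]
followed by a single pigeonhole. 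Both arguments rest on the same combinatorial fact (every unramified prime of $k$ splits in at least one $K_i$), and both yield the identical saving $\delta_c=\Delta(\ell,p)/(p(\eta+1))$. Your route is a little more streamlined; the paper's route singles out $K_1$ in a way that more visibly anticipates the incomparable case, where analytic input (Brun--Titchmarsh or Lemma~\ref{lem:ZT}) is applied specifically to $K_1$.

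One small quibble: invoking Lemma~\ref{lem:ZT} ``in its weakest form'' for the lower bound on $N(Y)$ is unnecessary and slightly misdirected, since that lemma concerns a variable extension $L/k$ with $\Disc(L/k)\ge D_0$. What you actually need is just the Landau prime ideal theorem for the \emph{fixed} base field $k$, which gives $\pi_k(Y)\ge \tfrac{1}{2}Y/\log Y$ once $Y$ exceeds a threshold depending only on $k$; this is standard and does not require any Chebotarev-type input.
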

\begin{proof}
We separate the discussion when $K_1/k$ is $\Delta(\ell, p)$-bad or good with respect to $c$ where $c$ is a fixed absolute number satisfying $c< \frac{1}{p+1}$. The constant $c$ will be fixed once and for all in the proof of the current theorem. By Lemma  \ref{lem:rank2-disc-bound}, we get 
$$\Disc(K_1/k) \ge \Disc(L/k)^{1/p(\eta+1)} \ge \Disc(L/k)^{1/p(\eta_0(1+\epsilon_0)+1)}.$$
So for a fixed integer $L_0>0$, there are only finitely many $A$-extensions $L/k$ where $\Disc(L/k) \le L_0$, thus finitely many $L/k$ with $\Disc(K_1/k)\le K_0 = L_0^{1/p(\eta_0(1+\epsilon_0)+1)}$ and $\eta(L/k) \le \eta_0(1+\epsilon_0)$. So we can assume both $L/k$ and $K_1/k$ are sufficiently large.

If $K_1/k$ is $\Delta(\ell, p)$-bad, then we are going to show that at least one of $K_i/k$ is $\theta_i$-good where $$\theta_i: = \Delta(\ell, p) \frac{\ln \Disc(K_1/k)}{ \ln \Disc(K_i/k)} <\Delta(\ell,p),$$
for $2\le i\le p+1$. Equivalently, we define $\theta_i$ so that $\Disc(K_1)^{\Delta(\ell,p)} = \Disc(K_i)^{\theta_i}$. Consider all primes $p$ in $k$ with $|p|<Y$ where $Y = \Disc(K_1/k)^{\Delta(\ell, p)}$. Since $K_1/k$ is $\Delta(\ell,p)$-bad, there are at most $cY/\ln Y$ primes in $k$ splitting in $K_1/k$. The number of primes in $k$ that are ramified in $L/k$ is bounded by 
$$O_{\epsilon, k}(\Disc(L/k)^{\epsilon})\le O_{\epsilon,k, \epsilon_0}(Y^{\epsilon}),$$
since $Y\ge \Disc(L/k)^{\Delta(\ell,p)/p(\eta_0(1+\epsilon_0)+1)}$. Therefore when $L/k$ is sufficiently large,
\begin{equation}
\begin{aligned}
\pi(Y;K_1/k, \hat{e}) = \pi(Y)- \pi(Y;K_1/k, e) - O_{\epsilon,k,\epsilon_0}(Y^{\epsilon}) \ge (1- c-\epsilon)\cdot \frac{Y}{\ln Y},
\end{aligned}
\end{equation}
where the last inequality holds whenever $Y \ge Y_0 = Y_0(\epsilon, \epsilon_0)$ with $Y_0$ depending at most on $\epsilon$ and $\epsilon_0$. Since the decomposition group of $A$ at an unramified prime is cyclic, a prime $p$ in $k$ that is inert in $K_1/k$ and must be split in some $K_i$ for $2\le i\le p+1$. By pigeon hole principle, there exists at least one $K_i/k$ satisfying
$$ \pi (Y;K_i/k, e) \ge \frac{1-c-\epsilon}{p}\cdot \frac{Y}{\ln Y} \ge c\frac{Y}{\ln Y},$$
then $K_i/k$ is $\theta_i$-good. Let's say $K_j/k$ with $j>1$ is $\theta_j$-good, then by Lemma \ref{lem:EVW}, we get
	$$|\Cl_{K_j/k}[\ell]|\le O_{\epsilon, k}(\Disc(K_j/k)^{1/2-\theta_j+\epsilon}),$$	
where we drop the dependence on $c$ since we fix the absolute number $c<\frac{1}{p+1}$ from the beginning. Therefore by Lemma \ref{lem:class-grp-decomposition} and \ref{lem:disc-prod} and \ref{lem:rank2-disc-bound}, when $\Disc(L/k) \ge L_0(\epsilon,\epsilon_0) = Y_0(\epsilon,\epsilon_0)^{p(\eta_0(1+\epsilon_0)+1)/\Delta(\ell,p)}$, we get
	\begin{equation}
	\begin{aligned}
	|\Cl_{L/k}[\ell]| = & \prod_{i}|\Cl_{K_i/k}[\ell]|\le  O_{\epsilon, k}(\Disc(K_j/k)^{1/2-\theta_i+\epsilon}) \prod_{i\neq j} \Disc(K_i/k)^{1/2+\epsilon}\\
	\le & O_{\epsilon,k}\Big(\frac{ \Disc(L/k)^{1/2+\epsilon}}{ \Disc(K_j/k)^{\theta_i}}\Big) \le O_{\epsilon,k}(\Disc(L/k)^{1/2- \Delta(\ell,p)/p(\eta+1) +\epsilon}).\\
	\end{aligned}
	\end{equation}
	
If $K_1$ is $\Delta(\ell, p)$-good, then we get from Lemma \ref{lem:EVW} that
	$$|\Cl_{K_1/k}[\ell]| \le O_{\epsilon,k}( \Disc(K_1/k)^{1/2-\Delta(\ell,p) +\epsilon}). $$
Then similarly, by Lemma \ref{lem:class-grp-decomposition} and \ref{lem:disc-prod} and \ref{lem:rank2-disc-bound}, we get
		\begin{equation}
		\begin{aligned}
		|\Cl_{L/k}[\ell]| = &\prod_{i}|\Cl_{K_i/k}[\ell]|\le  O_{\epsilon, k}\Big(\Disc(K_1/k)^{1/2-\Delta(\ell,p)+\epsilon}\Big) \prod_{i\neq 1} \Disc(K_i/k)^{1/2+\epsilon}\\
		\le & O_{\epsilon,k}\Big(\frac{ \Disc(L/k)^{1/2+\epsilon}}{ \Disc(K_1/k)^{\Delta(\ell,p)}}\Big) \le O_{ \epsilon,k}(\Disc(L/k)^{1/2-\Delta(\ell,p)/p(\eta+1)+\epsilon}).\\
		\end{aligned}
		\end{equation}
Since we assume $L/k$ sufficiently large for later discussion, i.e., $\Disc(L/k)\ge L_0(\epsilon,\epsilon_0)$, in summary, we show that for any $A$-extension $L/k$
	$$ |\Cl_{L/k}[\ell]| \le  O_{\epsilon,k,\epsilon_0}(\Disc(L/k)^{1/2 -\delta +\epsilon}),$$
with $\delta = \delta_c(\eta, \ell, p) =  \frac{\Delta(\ell,p)}{p(\eta+1)}$. 
\end{proof}

This gives a power saving on the pointwise bound of $\Cl_{L/k}[\ell]$ in terms of $\eta(L/k)$. 
\begin{remark}
	Notice that here in Theorem \ref{thm:odd-comparable} we only take the bound $\eta_0(\ell,p)_k\cdot (1+\epsilon_0)$ for $\eta$ for simplicity. The same non-trivial saving $\delta = \delta_c(\eta, \ell, p)$ can be obtained with $\eta\le M$ for arbitrary number $M$. In this scenario, the implied constant depends on $M$ instead of $\epsilon_0$.
\end{remark}

\subsection{Incomparable Size}\label{ssec:odd-incomparable}
In this section, we will give another strategy when $\eta$ is very large, equivalently when $K_2$ is much large than $K_1$. We will also see the cut-off $ \eta_0(\ell, p)_k$ from the following theorem. We will first prove the result over $\mathbb{Q}$ in Theorem \ref{thm:odd-incomparable-overQ} and then prove the result over a general number field $k$ in Theorem \ref{thm:odd-incomparable-overk}.

\begin{theorem}\label{thm:odd-incomparable-overQ}
	Given $A = \zp\times \zp$ with odd $p$, an integer $\ell>1$ with $(\ell, p)=1$. Denote $\eta_0 = \eta_0(\ell,p) = ((p-1)\cdot \Delta(\ell,p) \cdot (1-2/p))^{-1}$. For any $A$-extension $L/\Q$ with $\eta = \eta(L/\Q) > \eta_0(1+\epsilon_0)$, we have the pointwise bound
	$$ |\Cl_L[\ell]| \le O_{\epsilon,\epsilon_0}(\Disc(L)^{1/2 - \delta +\epsilon})$$
	for some $$\delta = \delta_{ic}(\eta,\ell,p) = \frac{\Delta(\ell,p) \eta}{ p(\eta+1)}$$ where $\eta = \frac{\ln \Disc(K_2)}{\ln \Disc(K_1)}$.
\end{theorem}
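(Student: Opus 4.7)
The strategy is to find some $j_0 \in \{2,\ldots,p+1\}$ such that the sub-extension $K_{j_0}/\Q$ carries enough small split primes to be $\theta_{j_0}$-good (Definition \ref{bad}) for some $\theta_{j_0}$ only slightly smaller than $\Delta(\ell,p)$. Once this is in hand, Lemma \ref{lem:EVW} bounds $|\Cl_{K_{j_0}}[\ell]|$ non-trivially, the remaining factors are bounded trivially, and assembling via Lemmas \ref{lem:class-grp-decomposition} and \ref{lem:disc-prod} together with Lemma \ref{lem:rank2-disc-bound} yields the claimed saving $\delta_{ic}$. The departure from the comparable case (Theorem \ref{thm:odd-comparable}) is that I will choose the test range $Y := \Disc(K_2)^{\Delta(\ell,p)}$ rather than $\Disc(K_1)^{\Delta(\ell,p)}$; since the $K_j$ are ordered by discriminant, $Y \le \Disc(K_j)^{\Delta(\ell,p)}$ for every $j\ge 2$, and this longer range is precisely what allows us to extract a saving proportional to $\ln\Disc(K_2)$ rather than $\ln\Disc(K_1)$.

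The main step is an upper bound on $\pi(Y; K_1/\Q, e)$ via Brun-Titchmarsh. In the abelian group $(\zp)^2$ distinct cyclic subgroups intersect trivially, so every unramified rational prime with Frobenius outside $A_1\setminus\{e\}$ splits in some $K_j$ with $j\ge 2$, while any prime with Frobenius in $A_1\setminus\{e\}$ necessarily splits in $K_1$. Since $K_1/\Q$ is cyclic of degree $p$, conductor-discriminant gives $\Disc(K_1)=f_1^{p-1}$ for its conductor $f_1$, and primes splitting in $K_1$ lie in $\phi(f_1)/p$ residue classes modulo $f_1$. Computing $\ln f_1/\ln Y = 1/((p-1)\Delta(\ell,p)\eta)$, the assumption $\eta > \eta_0(1+\epsilon_0)$ forces $\ln f_1/\ln Y < (p-2)/(p(1+\epsilon_0))$, and summing Lemma \ref{lem:B-T} over these residue classes produces
\[
\pi(Y;K_1/\Q,e) \;\le\; \frac{2(1+\epsilon_0)}{p\epsilon_0+2}\cdot\frac{Y}{\ln Y}.
\]
Subtracting this together with the $O_\epsilon(Y^\epsilon)$ ramified primes from $\pi(Y)\sim Y/\ln Y$ leaves at least $\tfrac{(p-2)\epsilon_0}{p\epsilon_0+2}\cdot Y/\ln Y - O_\epsilon(Y^\epsilon)$ primes splitting in some $K_j$ with $j\ge 2$. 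Pigeon-holing over the $p$ indices $j\in\{2,\ldots,p+1\}$ furnishes $j_0$ with $\pi(Y;K_{j_0}/\Q,e)\ge c\cdot Y/\ln Y$ for an absolute constant $c = c(\epsilon_0,p)>0$, provided $\Disc(L)$ exceeds an effective threshold $L_0(\epsilon,\epsilon_0)$; the finitely many smaller cases are absorbed into the implicit constant.

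Setting $\theta_{j_0} := \Delta(\ell,p)\ln\Disc(K_2)/\ln\Disc(K_{j_0}) \le \Delta(\ell,p) < 1/(2\ell(p-1))$, we have $Y = \Disc(K_{j_0})^{\theta_{j_0}}$, so $K_{j_0}$ is $\theta_{j_0}$-good with respect to $c$, and Lemma \ref{lem:EVW} gives $|\Cl_{K_{j_0}}[\ell]| \le O_\epsilon\bigl(\Disc(K_{j_0})^{1/2-\theta_{j_0}+\epsilon}\bigr)$. Combining with the trivial bounds on the other $|\Cl_{K_i}[\ell]|$ via Lemmas \ref{lem:class-grp-decomposition} and \ref{lem:disc-prod},
\[
|\Cl_L[\ell]| \;\le\; O_\epsilon\!\left(\frac{\Disc(L)^{1/2+\epsilon}}{\Disc(K_{j_0})^{\theta_{j_0}}}\right) = O_\epsilon\!\left(\frac{\Disc(L)^{1/2+\epsilon}}{\Disc(K_2)^{\Delta(\ell,p)}}\right) \le O_\epsilon\bigl(\Disc(L)^{1/2-\delta_{ic}+\epsilon}\bigr),
\]
where the last inequality uses $\Disc(K_2)\ge\Disc(L)^{\eta/(p(\eta+1))}$ from Lemma \ref{lem:rank2-disc-bound}. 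The delicate point is the Brun-Titchmarsh verification: the threshold $\eta_0 = ((p-1)\Delta(\ell,p)(1-2/p))^{-1}$ is precisely the smallest $\eta$ for which the inevitable $2/p$ loss in Brun-Titchmarsh leaves positive room for pigeon-holing, which is exactly why the comparable and incomparable regimes must be treated by separate arguments.
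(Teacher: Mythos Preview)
Your proposal is correct and follows essentially the same route as the paper: choose the test range $Y=\Disc(K_2)^{\Delta(\ell,p)}$, apply Brun--Titchmarsh with modulus $\Cond(K_1)$ to bound $\pi(Y;K_1/\Q,e)$ (your explicit bound $\tfrac{2(1+\epsilon_0)}{p\epsilon_0+2}\cdot Y/\ln Y$ is exactly the paper's constant after unwinding $\eta_0$), pigeon-hole among $K_2,\ldots,K_{p+1}$ to find a $\theta_{j_0}$-good subfield, and finish via Lemmas \ref{lem:EVW}, \ref{lem:class-grp-decomposition}, \ref{lem:disc-prod}, \ref{lem:rank2-disc-bound}. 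One cosmetic point: your final display should carry $O_{\epsilon,\epsilon_0}$ rather than $O_\epsilon$, since your constant $c=c(\epsilon_0,p)$ feeds into Lemma \ref{lem:EVW}.
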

\begin{proof}	
	By Lemma \ref{lem:rank2-disc-bound}, we have $ \Disc(K_2) \ge \Disc(L)^{ \eta/p(\eta+1)} \ge \Disc(L)^{\eta_0(1+\epsilon_0)/p(\eta_0(1+\epsilon_0)+1)}$. So for a fixed integer $L_0>0$, there are only finitely many $L$ with $\Disc(L)\le L_0$, thus finitely many $L$ with $\Disc(K_2)\le K_0 = L_0^{\eta_0(1+\epsilon_0)/p(\eta_0(1+\epsilon_0)+1)}$ and $\eta>\eta_0(1+\epsilon_0)$. So we can assume that both $L$ and $K_2$ are sufficiently large. 
	
    We will show that at least one of $K_i$ for $2\le i\le p+1$ is  $\theta_i$-good for some $\theta_i>0$ with respect to $c$ where $c$ is a fixed small number satisfying $c< \frac{(p-2)\epsilon_0}{2+p\epsilon_0}$. The constant $c = c(\epsilon_0)$ will be fixed once and for all for the current theorem.
    
    If $\eta(L/\Q) > \eta_0(1+\epsilon_0)$, then we can apply Lemma \ref{lem:B-T} with 
		$$x = \Disc(K_2)^{\Delta(\ell,p)}, \quad \quad q = \Cond(K_1)\asymp  \Disc(K_1)^{1/(p-1)},$$
	to count the number of primes in $\Q$ splitting in $K_1/\Q$. By class field theory, this is equivalent to taking $\frac{\phi(q)}{p}$ residue classes $a \mod q$ and then adding up $\pi(x;q, a)$ over $a$. Therefore we have positive density of primes up to $x$ in $\Q$ that are inert in $K_1/\Q$, 
	\begin{equation}
	\begin{aligned}
	\pi(x; K_1/\Q, \hat{e}) & = \pi(x) - \pi(x; K_1/\Q, e) - O_{\epsilon}(\Disc(L)^{\epsilon}) \\
	& \ge \pi(x) - \frac{2}{1 - 1/\Delta(\ell,p)(p-1)\eta}\cdot \frac{x}{p\ln x} - O_{\epsilon,\epsilon_0}(x^{\epsilon}),\\
	& \ge C \frac{x}{\ln x}.
	\end{aligned}
	\end{equation}
	The first inequality comes from Lemma \ref{lem:B-T} and $\Disc(K_2) \ge \Disc(L)^{\eta_0(1+\epsilon_0)/p(\eta_0(1+\epsilon_0)+1)}$. The second inequality holds when we take $C = 1 - \frac{2}{p} \frac{1}{1 - 1/\Delta(\ell,p)(p-1)\eta_0(1+\epsilon_0)} -\epsilon$ and $x \ge x_0 = x_0(\epsilon)$ with $x_0$ depending at most on $\epsilon$. Primes that are inert in $K_1$ must be split in $K_i$ for some $i>1$. Therefore by pigeon hole principle, there exists at least one $K_j$ for $2\le j\le p+1$ satisfying
    \begin{equation}
    \pi(x; K_j, e) \ge \frac{C}{p} \cdot \frac{x}{\ln x}\ge c\frac{x}{\ln x},
    \end{equation}
    where the last inequality comes from the assumption $c < \frac{(p-2)\epsilon_0}{2+\epsilon_0}$. This $K_j$ is $\theta_j$-good for
	\begin{equation}
		\theta_j:= \Delta(\ell,p) \cdot \frac{\ln \Disc(K_2)}{\ln \Disc(K_j)} \le \Delta(\ell, p). 
	\end{equation}
	
	Then by Lemma \ref{lem:EVW}, we get
	$$|\Cl_{K_j}[\ell]| \le O_{\epsilon,c}(\Disc(K_j)^{1/2-\theta_j+\epsilon}) = O_{\epsilon,\epsilon_0}(\Disc(K_j)^{1/2-\theta_j+\epsilon}),$$
	since our constant $c$ is a small number depending at most on $\epsilon_0$. By Lemma \ref{lem:disc-prod} and \ref{lem:class-grp-decomposition} and \ref{lem:rank2-disc-bound}, we have for every $L$ that
		\begin{equation}
		\begin{aligned}
		|\Cl_L[\ell]|  \le O_{\epsilon,\epsilon_0}(\Disc(L)^{1/2- \Delta(\ell,p)\eta/p(\eta+1)+\epsilon }).\\
		\end{aligned}
		\end{equation}
	So we prove this theorem with 
	$$\delta_{ic}(\eta, \ell, p) = \frac{\Delta(\ell,p) \eta}{p(\eta+1)}.$$
\end{proof}
Then we give the version over a general number field. The only distinction is that we will apply Lemma \ref{lem:ZT} instead of Lemma \ref{lem:B-T}.
\begin{theorem}\label{thm:odd-incomparable-overk}
	Given $A = \zp\times \zp$, an integer $\ell>1$ with $(\ell, p)=1$. Denote $\eta_0 = \eta_0(\ell,p)_k = \max\{ \beta, \gamma +\Delta(\ell,p) \}/\Delta(\ell, p)$ where $\beta = \beta(k, \zp)$ and $\gamma= \gamma(k, \zp)$. For any $A$-extension $L/k$ with $\eta(L/k) > \eta_0$, we have the pointwise bound
    $$	|\Cl_{L/k}[\ell]|  \le O_{\epsilon,k}(\Disc(L/k)^{1/2-\delta+\epsilon }),$$
    where
   $$\delta= \delta_{ic,k}(\eta, \ell,p) = \frac{(\Delta(\ell,p)-\gamma/\eta)\eta}{p(\eta+1)}.$$
\end{theorem}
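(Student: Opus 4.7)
The plan is to mirror the proof of Theorem \ref{thm:odd-incomparable-overQ}, replacing Brun--Titchmarsh (Lemma \ref{lem:B-T}) by the general number field lower bound of Lemma \ref{lem:ZT} applied to the degree-$p$ extension $K_1/k$. Order the $p+1$ degree-$p$ subfields by discriminant so that $\Disc(K_1/k) \le \Disc(K_i/k)$ for every $i$, and set $x := \Disc(K_2/k)^{\Delta(\ell,p)}$; note that $x = \Disc(K_1/k)^{\eta\Delta(\ell,p)}$. By Lemma \ref{lem:rank2-disc-bound}, $\Disc(K_2/k) \ge \Disc(L/k)^{\eta/(p(\eta+1))}$, so up to finitely many exceptions one may assume $L/k$ (and hence $K_1/k$ and $x$) is large enough to satisfy the hypotheses $\Disc(K_1/k) \ge D_0(k)$ and $x \ge \Disc(K_1/k)^{\beta}$ of Lemma \ref{lem:ZT}; the latter is ensured by $\eta > \eta_0 \ge \beta/\Delta(\ell,p)$.

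Summing the lower bound of Lemma \ref{lem:ZT} over the $p-1$ non-identity (singleton) conjugacy classes of $\Gal(K_1/k)=\zp$, and using $\Disc(K_1/k)^{\gamma} = x^{\gamma/(\eta\Delta(\ell,p))}$, gives
$$\pi(x;K_1/k,\hat{e}) \;\ge\; C_k\,\frac{p-1}{p}\,\frac{x^{\,1-\gamma/(\eta\Delta(\ell,p))}}{\ln x}.$$
Every prime of $k$ that is unramified in $L/k$ and non-split in $K_1/k$ has Frobenius in a unique $A_j \subset A$ with $j \ne 1$, hence splits completely in exactly one $K_j$ with $j \ge 2$. Subtracting the $O_{\epsilon,k}(\Disc(L/k)^{\epsilon})$ primes of $k$ that are ramified in $L/k$ and applying pigeonhole, one finds an index $j \in \{2,\ldots,p+1\}$ with
$$\pi(x; K_j/k, e) \;\ge\; \frac{C_k'}{p}\,\frac{x^{\,1-\gamma/(\eta\Delta(\ell,p))}}{\ln x}.$$

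Because this count is not of the clean form $c\,x/\ln x$, one invokes Lemma \ref{EVS} directly rather than its packaged form Lemma \ref{lem:EVW}. Setting $\theta_j := \Delta(\ell,p)\cdot \ln\Disc(K_2/k)/\ln\Disc(K_j/k) \le \Delta(\ell,p) < 1/(2\ell(p-1))$, so that $\Disc(K_j/k)^{\theta_j} = x$, Lemma \ref{EVS} gives
$$|\Cl_{K_j/k}[\ell]| \;\le\; O_{\epsilon,k}\!\left(\Disc(K_j/k)^{\,1/2 - \theta_j(1-\gamma/(\eta\Delta(\ell,p))) + \epsilon}\right).$$
Combining with the trivial Brauer--Siegel bound $|\Cl_{K_i/k}[\ell]| \le O_{\epsilon,k}(\Disc(K_i/k)^{1/2+\epsilon})$ for $i\ne j$ via Lemmas \ref{lem:class-grp-decomposition} and \ref{lem:disc-prod}, and then substituting $\Disc(K_j/k)^{\theta_j} = \Disc(K_2/k)^{\Delta(\ell,p)} \ge \Disc(L/k)^{\Delta(\ell,p)\eta/(p(\eta+1))}$ from Lemma \ref{lem:rank2-disc-bound}, yields the desired saving $\delta_{ic,k}(\eta,\ell,p) = (\Delta(\ell,p)\eta - \gamma)/(p(\eta+1))$.

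The main difficulty, compared to the $\Q$-case, is the unavoidable $\Disc(K_1/k)^{-\gamma}$ loss built into Lemma \ref{lem:ZT}, which shrinks the effective split-prime count by a factor of $x^{\gamma/(\eta\Delta(\ell,p))}$ and rescales the exponent saving by $1-\gamma/(\eta\Delta(\ell,p))$. The two constraints $\eta_0\Delta(\ell,p)\ge\beta$ (needed to apply Lemma \ref{lem:ZT} at $x$) and $\eta_0\Delta(\ell,p)\ge\gamma+\Delta(\ell,p)$ (needed to keep the rescaling factor bounded below and to match continuously with $\delta_c$ at $\eta=\eta_0$) are precisely what force the definition $\eta_0 = \max\{\beta,\gamma+\Delta(\ell,p)\}/\Delta(\ell,p)$.
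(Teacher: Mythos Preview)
Your overall strategy matches the paper's proof: set $x = \Disc(K_2/k)^{\Delta(\ell,p)}$, use Lemma~\ref{lem:ZT} on $K_1/k$ to produce many primes non-split in $K_1/k$, pigeonhole into some $K_j$ with $j\ge 2$, and then apply Ellenberg--Venkatesh. Your observation that one must invoke Lemma~\ref{EVS} directly (rather than the packaged Lemma~\ref{lem:EVW}), because the split-prime count is only of order $x^{1-\gamma/(\eta\Delta(\ell,p))}$ rather than $x$, is in fact more careful than the paper's own citation.

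There is, however, one genuine gap. You write that ``up to finitely many exceptions one may assume $L/k$ (and hence $K_1/k$ \ldots) is large enough to satisfy $\Disc(K_1/k)\ge D_0(k)$.'' The parenthetical inference is false: in this regime $\eta$ is unbounded, and the only lower bound available is $\Disc(K_1/k)\ge \Disc(L/k)^{1/p(\eta+1)}$, which gives nothing as $\eta\to\infty$. Concretely, for any fixed degree-$p$ extension $K_1/k$ with $\Disc(K_1/k)<D_0(k)$ there are infinitely many $A$-extensions $L/k$ having $K_1$ as smallest subfield and $\eta(L/k)>\eta_0$; none of these is covered by your argument, and Lemma~\ref{lem:ZT} cannot be applied to such $K_1/k$. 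The paper handles this by a separate case: when $\Disc(K_1/k)<D_0(k)$, the field $K_1$ ranges over a finite set depending only on $k$, so the ordinary effective Chebotarev density theorem gives $\pi(x;K_1/k,\hat{e})\ge \big(\tfrac{p-1}{p}-\epsilon\big)\,x/\ln x$ once $x$ exceeds a threshold depending only on $k$ and $\epsilon$. This bound is stronger than the $\Disc(K_1/k)^{-\gamma}\,x/\ln x$ coming from Lemma~\ref{lem:ZT}, so after inserting this case split the remainder of your argument goes through unchanged.
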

\begin{proof}
	Notice that by Lemma \ref{lem:rank2-disc-bound}, we have 	
	$$\Disc(K_2/k)\ge \Disc(L/k)^{\eta/p(\eta+1)} \ge \Disc(L/k)^{\eta_0/p(\eta_0+1)}.$$
	So for a fixed integer $L_0>0$, there are only finitely many $L/k$ with $\Disc(L/k)\le L_0$, thus finitely many $L/k$ with $\Disc(K_2/k)\le K_0= L_0^{\eta_0/p(\eta_0+1)}$ and $\eta> \eta_0$. So we can assume that both $K_2/k$ and $L/k$ are sufficient large.
	
	Firstly, we will show that there exist a lot of primes inert in $K_1/k$ with the range of consideration $x =\Disc(K_2/k)^{\Delta(\ell, p)}$ when $L/k$ is sufficiently large. We will apply Lemma \ref{lem:ZT} to $K_1/k$ with $x = \Disc(K_2/k)^{\Delta(\ell, p)}$. Recall the absolute constant $D_0=D_0(k)$ depending at most on $k$ in Lemma \ref{lem:ZT}. 
	
	If $\Disc(K_1/k)<D_0$, then it follows from the standard Chebotarev density theorem that for $C'= \frac{p-1}{p}-\epsilon$, we have
	$$\pi(x; K_1/k, \hat{e}) \ge C'\frac{x}{\ln x} = \frac{C'}{\Delta(\ell,p)}\cdot \frac{\Disc(K_2/k)^{\Delta(\ell, p) }}{\ln \Disc(K_2/k)},$$
    when $x$ is sufficiently large comparing to $D_0$, say $x \ge x_0 = x_0(D_0,\epsilon) = x_0(k,\epsilon)$ where $x_0$ depends at most on $D_0$ and $\epsilon$, thus depends at most on $k$ and $\epsilon$. If we take $K_0^{\Delta(\ell,p)} = x_0(k,\epsilon)$, then when $\Disc(L/k) \ge L_0 (k,\epsilon)= K_0(k,\epsilon)^{p(\eta_0+1)/\eta_0}$ is sufficiently large, we know that if $\Disc(K_1/k)< D_0$ then $\pi(x; K_1/k, \hat{e}) \ge \frac{C'}{\Delta(\ell,p)}\cdot \frac{\Disc(K_2/k)^{\Delta(\ell, p) }}{\ln \Disc(K_2/k)}$.

     If $\Disc(K_1/k)\ge D_0(k)$, then we apply Lemma \ref{lem:ZT}. When $\eta>\eta_0$, we have $\Disc(K_2/k)^{\Delta(\ell, p)} \ge \max\{ \Disc(K_1/k)^{\beta}, \Disc(K_1/k)^{\gamma} \}$ for $\beta = \beta(k, \zp)$ and $\gamma= \gamma(k, \zp)$ in Lemma \ref{lem:ZT}. By Lemma \ref{lem:ZT} there exists some $C_k>0$ such that
		\begin{equation}
		\pi(x; K_1/k, \hat{e}) \ge  C_k\frac{1}{\Disc(K_1/k)^{\gamma}} \cdot \frac{x}{\ln x} \ge \frac{C_k}{\Delta(\ell,p)}\cdot \frac{\Disc(K_2/k)^{\Delta(\ell, p) - \gamma/\eta}}{\ln \Disc(K_2/k)},
		\end{equation}
	where $C_k$ is some constant only depending on $k$. So in summary, as $L/k$ is sufficiently large (i.e., $\Disc(L/k)\ge L_0(k,\epsilon) = K_0(k,\epsilon)^{p(\eta_0+1)/\eta_0}$), we show
	$$\pi(x; K_1/k, \hat{e}) \ge \frac{C''_k}{\Delta(\ell,p)}\cdot \frac{\Disc(K_2/k)^{\Delta(\ell, p) - \gamma/\eta}}{\ln \Disc(K_2/k)},$$
	where $C''_k = \min\{ C', C'_k\}$ depends only on $k$.
	
	By pigeon hole principle, there exists at least one $K_j/k$ for $2\le j\le p+1$ where 
	   \begin{equation}
	   \pi(x; K_j/k, e) \ge \frac{C''_k}{p\Delta(\ell,p)} \cdot \frac{\Disc(K_2/k)^{\Delta(\ell, p) - \gamma/\eta}}{\ln \Disc(K_2/k)}.
	   \end{equation}
	 Finally by Lemma \ref{lem:EVW} and Lemma \ref{lem:rank2-disc-bound}, we have for any $L/k$ that
	 	\begin{equation}
	 	\begin{aligned}
	 	|\Cl_{L/k}[\ell]|  \le O_{\epsilon,k}(\Disc(L/k)^{1/2-\delta+\epsilon }),\\
	 	\end{aligned}
	 	\end{equation}
	 where
	 $$ \delta= \delta_{ic,k}(\eta, \ell, p) = \frac{(\Delta(\ell,p)-\gamma/\eta)\eta}{p(\eta+1)}.$$
\end{proof}
\begin{remark}
	Here when $k=\Q$ and $p=2$, we can apply Lemma \ref{lem:Maynard} as a sub-case of Lemma \ref{lem:ZT} with $\gamma(\Q, \mathbb{Z}/2\mathbb{Z}) = 1/2-\epsilon$, $\beta(\Q, \mathbb{Z}/2\mathbb{Z} ) = 8$ and $D_0=q_2$. 
\end{remark}

\subsection{Savings for Odd $A$ with Rank $2$ }\label{ssec:odd-rank2-results}
So combining Theorem \ref{thm:odd-comparable} and \ref{thm:odd-incomparable-overQ} and (\ref{eqn:abs-to-rel}) in Remark \ref{rmk:abs-to-rel}, we get the following theorem. 
\begin{theorem}[Odd Exponent, Rank $2$, Over $\Q$]\label{thm:odd-rank2-final-Q}
	Given $A = \zp\times \zp$ with odd $p$ and an integer $\ell>1$ with $(\ell, p)=1$. For any $A$-extension $L/\Q$, we have
	$$ |\Cl_L[\ell]| \le O_{\epsilon}(\Disc(L)^{1/2 - \delta( \ell,p)+\epsilon}),$$
	with $$\delta(\ell,p)= \delta_c(\eta_0, \ell, p)=\frac{\Delta(\ell, p)}{p(1+\eta_0)},$$
	where $\eta_0 = \frac{1}{(p-1) \Delta(\ell, p)(1-2/p)}$. 
\end{theorem}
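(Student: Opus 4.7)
The plan is to combine the two complementary bounds: Theorem \ref{thm:odd-comparable} (which handles $\eta(L/\Q) \le \eta_0(1+\epsilon_0)$) and Theorem \ref{thm:odd-incomparable-overQ} (which handles $\eta(L/\Q) > \eta_0(1+\epsilon_0)$) for a small auxiliary parameter $\epsilon_0>0$ to be chosen at the end. Since $k=\Q$, the relative and absolute class groups coincide (via Remark \ref{rmk:abs-to-rel}), so the bounds stated in those theorems translate directly into bounds for $|\Cl_L[\ell]|$. Given any $A$-extension $L/\Q$, I order the $p+1$ degree-$p$ subfields $K_1,\dots,K_{p+1}$ by discriminant and set $\eta = \eta(L/\Q) = \ln\Disc(K_2)/\ln\Disc(K_1)\ge 1$, then dispatch on whether $\eta$ falls above or below $\eta_0(1+\epsilon_0)$.

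In the comparable regime, Theorem \ref{thm:odd-comparable} yields the saving $\delta_c(\eta,\ell,p) = \Delta(\ell,p)/(p(\eta+1))$, which is a decreasing function of $\eta$; so over the range $1 \le \eta \le \eta_0(1+\epsilon_0)$ its minimum value is attained at the upper endpoint and equals $\Delta(\ell,p)/(p(\eta_0(1+\epsilon_0)+1))$. In the incomparable regime, Theorem \ref{thm:odd-incomparable-overQ} yields $\delta_{ic}(\eta,\ell,p) = \Delta(\ell,p)\,\eta/(p(\eta+1))$, an increasing function of $\eta$, whose minimum over $\eta > \eta_0(1+\epsilon_0)$ is attained as $\eta \to \eta_0(1+\epsilon_0)$ and equals $\Delta(\ell,p)\,\eta_0(1+\epsilon_0)/(p(\eta_0(1+\epsilon_0)+1))$. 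Since $\eta_0 \ge 1$ (this is the only place where the explicit value of $\eta_0$ matters for the comparison), the comparable-side minimum is the smaller of the two, so the uniform saving over both regimes is
\[
\delta_* (\epsilon_0) \;:=\; \frac{\Delta(\ell,p)}{p(\eta_0(1+\epsilon_0)+1)}.
\]

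Finally, I absorb the $\epsilon_0$ dependence into $\epsilon$. Observe that $\delta_*(\epsilon_0) \to \Delta(\ell,p)/(p(\eta_0+1)) = \delta(\ell,p)$ continuously as $\epsilon_0 \to 0$, so given $\epsilon > 0$ I may fix $\epsilon_0 = \epsilon_0(\epsilon)$ small enough that $\delta(\ell,p) - \delta_*(\epsilon_0) \le \epsilon/2$; the resulting implied constant $O_{\epsilon,\epsilon_0}$ then becomes $O_{\epsilon}$, and the exponent of $\Disc(L)$ in both cases is at most $1/2 - \delta(\ell,p) + \epsilon$ after absorbing $\epsilon/2$ into $\epsilon$. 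The main (very minor) obstacle is precisely this bookkeeping: verifying that the two savings match continuously at the cutoff $\eta = \eta_0$ to the correct common value $\delta(\ell,p)$, and that the $\epsilon_0$-dependence of the implied constants in Theorems \ref{thm:odd-comparable} and \ref{thm:odd-incomparable-overQ} can be suppressed by fixing $\epsilon_0$ as a function of $\epsilon$ alone. No new analytic or algebraic ingredient is needed beyond what has already been proved in the two subcases.
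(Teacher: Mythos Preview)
Your proposal is correct and follows essentially the same approach as the paper's own proof: combine Theorem \ref{thm:odd-comparable} and Theorem \ref{thm:odd-incomparable-overQ}, observe that the worst saving over all $\eta$ occurs at the cutoff $\eta_0(1+\epsilon_0)$ on the comparable side, and then let $\epsilon_0\to 0$ to absorb the loss into the $\epsilon$. Your write-up is in fact slightly more explicit than the paper's about the monotonicity of $\delta_c$ and $\delta_{ic}$ and about why $\eta_0\ge 1$ forces $\delta_c(\eta_0,\ell,p)\le \delta_{ic}(\eta_0,\ell,p)$, but no new idea is involved.
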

\begin{proof}
	Combining Theorem \ref{thm:odd-comparable} and Theorem \ref{thm:odd-incomparable-overQ}, for every fixed small $\epsilon_0$, we show that for every $A$-extension $L/\Q$
	$$ |\Cl_L[\ell]| \le O_{\epsilon}(\Disc(L)^{1/2 - \delta(\ell,p, \epsilon_0)+\epsilon}),$$
	where $\delta(\ell,p,\epsilon_0) = \delta_c(\eta_0(1+\epsilon_0), \ell, p) = \frac{\Delta(\ell,p)}{p(1+\eta_0(1+\epsilon_0))}$. Since we can take arbitrarily small $\epsilon_0$ and we also state the theorem with arbitrarily small $\epsilon$, we can get
	$$ |\Cl_L[\ell]| \le O_{\epsilon}(\Disc(L)^{1/2 - \delta(\ell,p)+\epsilon}),$$
	for $\delta(\ell,p) = \delta_c(\eta_0, \ell, p)$. 
\end{proof}

For general number field $k$, similarly notice that since $\delta_{ic, k}(\eta, \ell,p)$ always increases as $\eta$ increases and $\delta_{c}(\eta, \ell,p)$ always decreases as $\eta$ increases. By comparing $\delta_c(\eta_0, \ell, p)$ and $\delta_{ic,k}(\eta_0, \ell, p)$ at $\eta_0 = \eta_0(\ell,p)_k =  \max\{ \beta(k, \zp), \gamma(k, \zp) +\Delta(\ell,p) \}/\Delta(\ell, p)$, we see that the smallest saving always happens at $\delta_c(\eta_0, \ell, p)_k$. So
 we are guaranteed to find the universal saving $\delta>0$ for all ranges of $\eta$ at the cut-off $\eta_0$. 
\begin{remark}\label{rmk:delta-reasoning-overk}
In the proof of Theorem \ref{thm:odd-incomparable-overk}, we can see that it suffices to take $\eta_0(\ell,p)_k$ to be  $ \max\{ \beta(k, \zp), \gamma(k, \zp) \}/\Delta(\ell, p)$. The reason that instead we take $$\eta_0(\ell,p)_k =  \max\{ \beta(k, \zp), \gamma(k, \zp) +\Delta(\ell,p) \}/\Delta(\ell, p),$$ is that it guarantees $\delta_{ic,k}(\eta_0, \ell, p)> \delta_c(\eta_0, \ell, p)$ and simplifies the final expression of the saving. However, notice that usually $\beta$ is larger than $\gamma$ in reality, see \cite{ZamThesis} for example, so in such situations it will not change the actual value of $\eta_0(\ell, p)_k$ after plugging in $\beta$ and $\gamma$. 
\end{remark}

\begin{theorem}[Odd Exponent, Rank $2$, Over $k$]\label{thm:odd-rank2-final-k}
	Given $A = \zp\times \zp$ with odd $p$ and an integer $\ell>1$ with $(\ell, p)=1$. For any $A$-extension $L/k$, we have
	$$ |\Cl_L[\ell]| \le O_{\epsilon,k}( \Disc(L)^{1/2 - \delta_k(\ell,p)+\epsilon}),$$
	where $\delta_k(\ell, p) =\delta_c(\eta_0, \ell, p) = \frac{\Delta(\ell, p)}{p(1+\eta_0)}$ and $\eta_0 = \eta_0(\ell,p)_k = \max\{ \beta(k, \zp), \gamma(k, \zp)+\Delta(\ell,p) \}/\Delta(\ell, p)$. 
\end{theorem}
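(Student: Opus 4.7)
The plan is to combine the comparable case (Theorem \ref{thm:odd-comparable}), which is already stated over a general number field $k$, with the incomparable case over $k$ (Theorem \ref{thm:odd-incomparable-overk}), in complete analogy with the proof of Theorem \ref{thm:odd-rank2-final-Q}. The two cases are separated by whether $\eta(L/k)$ exceeds the cutoff $\eta_0 = \eta_0(\ell,p)_k = \max\{\beta(k,\zp),\, \gamma(k,\zp)+\Delta(\ell,p)\}/\Delta(\ell,p)$. Once a uniform bound on $|\Cl_{L/k}[\ell]|$ is established, the statement about $|\Cl_L[\ell]|$ follows immediately from Remark \ref{rmk:abs-to-rel}.

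Fix a small auxiliary parameter $\epsilon_0>0$. For any $A$-extension $L/k$ with $\eta(L/k)\le \eta_0(1+\epsilon_0)$, Theorem \ref{thm:odd-comparable} yields $|\Cl_{L/k}[\ell]|\le O_{\epsilon,k,\epsilon_0}(\Disc(L/k)^{1/2-\delta_c(\eta,\ell,p)+\epsilon})$ with $\delta_c(\eta,\ell,p)=\Delta(\ell,p)/p(\eta+1)$. Since $\delta_c$ is strictly decreasing in $\eta$, throughout this range the saving is at least $\delta_c(\eta_0(1+\epsilon_0),\ell,p)$. For $\eta(L/k)>\eta_0(1+\epsilon_0)$, and in particular $\eta>\eta_0$, Theorem \ref{thm:odd-incomparable-overk} applies and yields the saving $\delta_{ic,k}(\eta,\ell,p)=(\Delta(\ell,p)\eta-\gamma)/p(\eta+1)$.

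The key comparison to make is that the worst of the two savings over the full range is the value $\delta_c(\eta_0,\ell,p)$ at the cutoff. One easily checks by differentiating $(\Delta(\ell,p)\eta-\gamma)/p(\eta+1)$ that $\delta_{ic,k}(\eta,\ell,p)$ is strictly increasing in $\eta$. Furthermore, the defining inequality $\Delta(\ell,p)\eta_0 \ge \gamma(k,\zp)+\Delta(\ell,p)$ gives
\begin{equation*}
\delta_{ic,k}(\eta_0,\ell,p)=\frac{\Delta(\ell,p)\eta_0-\gamma}{p(\eta_0+1)} \;\ge\; \frac{\Delta(\ell,p)}{p(\eta_0+1)} = \delta_c(\eta_0,\ell,p),
\end{equation*}
so on the entire incomparable range $\eta>\eta_0$ one has $\delta_{ic,k}(\eta,\ell,p)\ge \delta_c(\eta_0,\ell,p)$. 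Combined with the lower bound $\delta_c(\eta_0(1+\epsilon_0),\ell,p)$ on the comparable range, this means every $L/k$ satisfies
\begin{equation*}
|\Cl_{L/k}[\ell]| \le O_{\epsilon,k,\epsilon_0}\bigl(\Disc(L/k)^{1/2-\delta_c(\eta_0(1+\epsilon_0),\ell,p)+\epsilon}\bigr).
\end{equation*}

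To finish, given any target $\epsilon>0$, we pick $\epsilon_0=\epsilon_0(\epsilon)$ small enough so that $\delta_c(\eta_0(1+\epsilon_0),\ell,p) > \delta_c(\eta_0,\ell,p)-\epsilon/2=\delta_k(\ell,p)-\epsilon/2$, which is possible by continuity of $\delta_c$ in $\eta$. This absorbs $\epsilon_0$ into the stated $\epsilon$ and yields $|\Cl_{L/k}[\ell]|\le O_{\epsilon,k}(\Disc(L/k)^{1/2-\delta_k(\ell,p)+\epsilon})$. Finally, Remark \ref{rmk:abs-to-rel}, using $(\ell,|A|)=1$, converts this to the advertised bound on $|\Cl_L[\ell]|$. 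The only substantive piece beyond invoking the two case theorems is the elementary verification that $\delta_c(\eta_0,\ell,p)$ is indeed the minimum saving across the whole range; this is exactly the motivation for the slight enlargement of $\eta_0$ explained in Remark \ref{rmk:delta-reasoning-overk}, and presents no real obstacle.
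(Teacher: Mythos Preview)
Your proof is correct and follows essentially the same approach as the paper: combine Theorem~\ref{thm:odd-comparable} and Theorem~\ref{thm:odd-incomparable-overk} across the cutoff $\eta_0(\ell,p)_k$, observe via the monotonicity of $\delta_c$ and $\delta_{ic,k}$ together with the comparison $\delta_{ic,k}(\eta_0,\ell,p)\ge\delta_c(\eta_0,\ell,p)$ (exactly the content of Remark~\ref{rmk:delta-reasoning-overk}) that the minimum saving is $\delta_c(\eta_0,\ell,p)$, absorb $\epsilon_0$ into $\epsilon$, and then pass from relative to absolute via Remark~\ref{rmk:abs-to-rel}. The paper in fact only sketches this reasoning in the paragraph preceding the theorem and leaves the details implicit, so your write-up is if anything more complete.
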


\subsection{Induction}\label{ssec:odd-induction}
In this section, we will derive the $\ell$-torsion bound for every $A = (\zp)^r$ when $r>2$ from the case $A = \zp\times \zp$.

\begin{theorem}[Odd Exponent, Over $k$]\label{thm:odd-final-results-overk}
	Given $A= (\zp)^r$ with $r\ge 2$ and $p$ odd. Given an arbitrary integer $\ell = \ell_{ (p)}\cdot \ell_{p}$ where $\ell_{(p)}$ is the maximal factor of $\ell$ relatively prime to $p$. For any $A$-extension $L/k$, we have
	$$ |\Cl_{L}[\ell]| \le O_{k, \epsilon}(\Disc(L)^{1/2 - \delta_k(\ell_{(p)},p)+\epsilon}),$$
	where $\delta_{\Q}(\ell,p)= \delta(\ell,p)$ in Theorem \ref{thm:odd-rank2-final-Q} when $k=\Q$, and $\delta_k(\ell,p)$ in Theorem \ref{thm:odd-rank2-final-k} for general $k$. 
\end{theorem}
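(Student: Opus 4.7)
The plan is to reduce the rank-$r$ case to the rank-$2$ case (Theorems \ref{thm:odd-rank2-final-Q} and \ref{thm:odd-rank2-final-k}) using the decomposition results of Section \ref{sec:algebraic} along the degree-$p^2$ sub-extensions of $L/k$. The crucial feature that makes this reduction painless is that the saving $\delta_k(\ell_{(p)},p)$ in the rank-$2$ theorems does not depend on $r$, so the product identities in Section \ref{sec:algebraic} transmit the saving from rank $2$ to rank $r$ without any loss.

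First I would write $\ell = \ell_{(p)}\cdot \ell_p$ and split $|\Cl_L[\ell]| = |\Cl_L[\ell_{(p)}]|\cdot |\Cl_L[\ell_p]|$. The $p$-power part $|\Cl_L[\ell_p]|$ can be absorbed into an $\epsilon$-loss $O_\epsilon(\Disc(L)^\epsilon)$ by genus theory for the elementary abelian extension $L/k$ (as invoked in the remark after Theorem \ref{thm:main}), so it suffices to bound $|\Cl_L[\ell_{(p)}]|$ by $O_{\epsilon,k}(\Disc(L)^{1/2 - \delta_k(\ell_{(p)},p) + \epsilon})$. Since $(\ell_{(p)}, [L:k]) = (\ell_{(p)}, p^r) = 1$, Remark \ref{rmk:abs-to-rel} lets me pass to the equivalent relative statement $|\Cl_{L/k}[\ell_{(p)}]| \le O_{\epsilon, k}(\Disc(L/k)^{1/2 - \delta_k(\ell_{(p)}, p) + \epsilon})$.

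Next I would apply Corollary \ref{coro:class-grp-second-layer-decomposition} together with (\ref{eqn:disc-prod-second-layer}) at the second layer $t=2$:
$$|\Cl_{L/k}[\ell_{(p)}]| = \prod_{M_j/k} |\Cl_{M_j/k}[\ell_{(p)}]|^{(p-1)/(p^{r-1}-1)}, \qquad \Disc(L/k) = \prod_{M_j/k} \Disc(M_j/k)^{(p-1)/(p^{r-1}-1)},$$
where $M_j/k$ ranges over the degree-$p^2$ sub-extensions of $L/k$. Each $M_j/k$ is a $(\zp)^2$-extension, so Theorem \ref{thm:odd-rank2-final-k} (combined once more with Remark \ref{rmk:abs-to-rel}, valid because $(\ell_{(p)}, p^2) = 1$) yields
$$|\Cl_{M_j/k}[\ell_{(p)}]| \le O_{\epsilon, k}(\Disc(M_j/k)^{1/2 - \delta_k(\ell_{(p)}, p) + \epsilon}).$$
Raising this inequality to the power $(p-1)/(p^{r-1}-1)$, taking the product over $j$, and substituting the discriminant identity produces the desired relative bound on $|\Cl_{L/k}[\ell_{(p)}]|$; a last application of Remark \ref{rmk:abs-to-rel} converts it to the absolute bound on $|\Cl_L[\ell]|$. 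Over $k = \Q$ I would invoke Theorem \ref{thm:odd-rank2-final-Q} in place of Theorem \ref{thm:odd-rank2-final-k}, obtaining the sharper saving $\delta(\ell_{(p)}, p)$.

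There is no substantive new obstacle here: all of the analytic and algebraic content (the Ellenberg--Venkatesh lemma, the Brun--Titchmarsh input, the comparable/incomparable dichotomy in $\eta(L/k)$, and the ramification analysis) has already been spent in proving the rank-$2$ theorems and in Section \ref{sec:algebraic}. The only thing to verify is that the weights $(p-1)/(p^{r-1}-1)$ in the class-group and discriminant product formulas match exactly so that the weighted product of $\Disc(M_j/k)^{1/2 - \delta_k(\ell_{(p)}, p)}$ recombines to $\Disc(L/k)^{1/2 - \delta_k(\ell_{(p)},p)}$; this matching is built into (\ref{eqn:disc-prod-second-layer}) and Corollary \ref{coro:class-grp-second-layer-decomposition}.
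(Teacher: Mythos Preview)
Your proposal is correct and follows essentially the same route as the paper: reduce to the prime-to-$p$ part of $\ell$ via genus theory, pass to the relative setting, decompose along the degree-$p^2$ sub-extensions using Corollary \ref{coro:class-grp-second-layer-decomposition} and (\ref{eqn:disc-prod-second-layer}), apply the rank-$2$ theorem to each $M_j/k$, and recombine. The only cosmetic difference is that the paper handles $(\ell,p)=1$ first and adds the $\ell_p$ factor at the end, whereas you strip off $\ell_p$ at the start; and you quote the exponent $(p-1)/(p^{r-1}-1)$ directly from Corollary \ref{coro:class-grp-second-layer-decomposition}, which is indeed the correct one for general $r$.
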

\begin{proof}
	Firstly we assume $(\ell, p)=1$. The result for $r=2$ and $(\ell, p)$ is stated in Theorem \ref{thm:odd-rank2-final-Q} and \ref{thm:odd-rank2-final-k}. For $r>2$ and $(\ell, p)=1$, notice that
	\begin{equation}
	\begin{aligned}
	|\Cl_{L/k}[\ell]| &= \prod_{i} |\Cl_{K_i/k}[\ell]| =  \Big(\prod_{j} |\Cl_{M_j/k}[\ell]|\Big)^{1/(p+1)}  \le O_{\epsilon,k}\Big(\prod_j \Disc(M_j/k)^{1/2-\delta(\ell,p)+\epsilon}\Big)^{1/(p+1)} \\
	& =O_{\epsilon,k}\Big(\prod_j \Disc(M_j/k)^{1/(p+1)}\Big)^{1/2-\delta(\ell, p) +\epsilon} = O_{\epsilon,k}(\Disc(L/k)^{1/2-\delta(\ell,p)+\epsilon}). 
	\end{aligned}
	\end{equation}
	where $M_j$ ranges over all degree $p^2$ sub-extensions in $L$ over $\Q$. The first equality comes from Lemma \ref{lem:class-grp-decomposition}. The second equality comes from Corollary \ref{coro:class-grp-second-layer-decomposition}. The first inequality comes from Theorem \ref{thm:odd-rank2-final-Q}. The last equality comes from (\ref{eqn:disc-prod-second-layer}). Finally it follows from (\ref{eqn:abs-to-rel}) in Remark \ref{rmk:abs-to-rel}. 
	
	For general $\ell = \ell_{(p)}\ell_{p}$, notice that $|\Cl_L[\ell]|= |\Cl_L[\ell_{( p)}]|\cdot |\Cl_L[\ell_p]|$ and $|\Cl_L[\ell_p]| \le O_{\epsilon}(\Disc(L)^{\epsilon})$, we get $|\Cl_{L}[\ell]| \le O_{k, \epsilon}(\Disc(L)^{1/2 - \delta_k(\ell_{(p)},p)+\epsilon}).$	
\end{proof}

\begin{remark}[Odd Exponent, $\ell=2$, Over $k$]\label{rmk:odd-final-k-2torsion}
	When $\ell=2$, we can obtain better results because of the pointwise result on $2$-torsion from \cite{BSTTTZ}. It is proved that $|\Cl_F[2]| \le O(\Disc(F)^{1/2-1/2d+\epsilon})$ where $d=[F:\Q]$ by \cite{BSTTTZ}. By (\ref{eqn:abs-to-rel}) in Remark\ref{rmk:abs-to-rel}, we get for $K$ with $\Gal(K/k)=\zp$, the $2$-torsion is bounded 
	$$|\Cl_{K/k}[2]| \le O_{\epsilon, k}( \Disc(L/k)^{1/2-1/2p+\epsilon}).$$ 
	Then the statement follows from a straight forward use of Lemma \ref{lem:class-grp-decomposition}. 
\end{remark}

\section{Even $p$}\label{sec:even}
In this section, we will discuss the cases when $A$ is an elementary abelian group with even exponent, i.e., when $A = (\mathbb{Z}/2\mathbb{Z})^r$ and $r>1$. In section \ref{ssec:even-rank2}, we first give the result for $r=2$. Then in order to get a better saving than that obtained in section \ref{ssec:even-rank2}, we focus on $r=3$ in section \ref{ssec:even-rank3-comparable}, \ref{ssec:even-rank3-incomparable} and \ref{ssec:even-rank3-results}, and use an induction to get an overall better saving for $r>3$ in section \ref{ssec:even-induction}. 

The main reason that we separate the discussion for $p$ being odd and even is that in Theorem \ref{thm:odd-incomparable-overQ} we ask the constant $c$ to be smaller than $\frac{(p-2)\epsilon_0}{2+\epsilon_0}$, which is only positive when $p$ is odd. So when $p = 2$, we need to replace Theorem \ref{thm:odd-incomparable-overQ}, and, more importantly, consequences of Theorem \ref{thm:odd-incomparable-overQ}. The strategy for doing this is treat $r=3$ as the initial case for $p=2$, i.e., we replace Theorem \ref{thm:odd-incomparable-overQ} with Theorem \ref{thm:even-rank3-incomparable} in this section.

\subsection{Even Exponent with Rank $2$}\label{ssec:even-rank2}
In this section, we work with $A = \mathbb{Z}/2\mathbb{Z} \times  \mathbb{Z}/2\mathbb{Z}$. We will follow the notation introduced at the beginning of section \ref{sec:odd}. Recall that we have $K_i$ for $i = 1,2,3$ where $\Disc(K_1/k) \le \Disc(K_2/k) \le \Disc(K_3/k)$, and $\eta(L/k):= \frac{\ln \Disc(K_2/k)}{\ln \Disc(K_1/k)}$. Again we split the discussion to $\eta$ being small (the comparable case) and $\eta$ being big (the incomparable case). We take $$\eta_0 = \eta_0(\ell,2)_k = \max\{ \beta(k, \zp), \gamma(k, \zp)+\Delta(\ell,2) \}/ \Delta(\ell,2)$$ in this section.  

For the comparable case, we recall Theorem \ref{thm:odd-comparable} (which is stated for all $A$, not just odd $A$), which states that
$$|\Cl_{L/k}[\ell]| \le O_{\epsilon,k,\epsilon_0}(\Disc(L/k)^{1/2-\delta+\epsilon}),$$
where $\delta = \delta_c(\eta, \ell, 2) = \frac{\Delta(\ell, 2)}{2(\eta+1)}$ and $\eta = \eta(L/k)=\frac{\ln \Disc(K_2/k)}{\ln \Disc(K_1/k)} \le \eta_0(\ell, 2)_k(1+\epsilon_0)$. 

For the incomparable case, we recall Theorem \ref{thm:odd-incomparable-overk} (which is stated for all $A$, not just odd $A$), which states that
$$|\Cl_{L/k}[\ell]| \le O_{\epsilon, k}( \Disc(L/k)^{1/2-\delta+\epsilon}),$$
where $\delta = \delta_{ic, k}(\eta, \ell, 2) = \frac{(\Delta(\ell,2)-1/\eta) \eta}{2(\eta+1)}$ when $\eta> \eta_0(\ell,2)_k$. Combining the two cases, we get the following theorem. 

\begin{theorem}[Even Exponent, Rank $2$, Over $k$]\label{thm:even-rank2-results-overk}
	Given $A = \mathbb{Z}/2\mathbb{Z}\times \mathbb{Z}/2\mathbb{Z}$ and $\ell>1$ an odd integer. For any $A$-extension $L/k$, we have
	$$|\Cl_L[\ell]| \le O_{\epsilon, k}(\Disc(L)^{1/2-\delta_k(\ell,2)+\epsilon}),$$
	with 
	$$\delta_k(\ell, 2) =\frac{\Delta(\ell, 2)}{p(\eta_0+1)},$$
	where $\eta_0 = \max\{ \beta(k, \mathbb{Z}/2\mathbb{Z}), \gamma(k, \mathbb{Z}/2\mathbb{Z}) +\Delta(\ell,2) \}/\Delta(\ell, 2)$. 
	In particular, when $k = \Q$, we have
	$$\delta_{\Q}(\ell, 2) = \frac{\Delta(\ell, 2)}{p(\eta_0+1)} = \frac{1}{64\ell^2+4\ell}.$$
\end{theorem}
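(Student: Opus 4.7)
The plan is to assemble this theorem as an immediate combination of the two rank-$2$ results from Section \ref{sec:odd}. Both Theorem \ref{thm:odd-comparable} and Theorem \ref{thm:odd-incomparable-overk} are stated and proved for a general prime $p$, not only for odd $p$, so they apply verbatim with $A = \mathbb{Z}/2\mathbb{Z} \times \mathbb{Z}/2\mathbb{Z}$. The only substantive content left is to identify, among all $\eta \in [1,\infty)$, the value giving the weakest saving, and then to specialize the resulting formula to $k = \Q$.

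Concretely, for $\eta(L/k) \le \eta_0(1+\epsilon_0)$ I would apply Theorem \ref{thm:odd-comparable} with $p = 2$ to get the saving $\delta_c(\eta,\ell,2) = \Delta(\ell,2)/(2(\eta+1))$, which is strictly decreasing in $\eta$; for $\eta(L/k) > \eta_0$ I would apply Theorem \ref{thm:odd-incomparable-overk} with $p = 2$ to get $\delta_{ic,k}(\eta,\ell,2) = (\Delta(\ell,2)-\gamma/\eta)\eta/(2(\eta+1))$, which is strictly increasing in $\eta$. By Remark \ref{rmk:delta-reasoning-overk}, the cutoff $\eta_0$ is chosen precisely so that $\delta_{ic,k}(\eta_0) > \delta_c(\eta_0)$; hence the infimum over all $\eta$ is attained at $\eta = \eta_0$ on the decreasing branch, producing the universal bound $\delta_k(\ell,2) = \Delta(\ell,2)/(2(\eta_0+1))$. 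Sending $\epsilon_0 \to 0$ as in the proof of Theorem \ref{thm:odd-rank2-final-Q} absorbs the $\epsilon_0$-dependence of the implied constant, and then the equivalence (\ref{eqn:abs-to-rel}) in Remark \ref{rmk:abs-to-rel} converts the bound on $\Cl_{L/k}[\ell]$ into the stated bound on $\Cl_L[\ell]$.

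For the explicit constant over $\Q$, I would invoke the remark following Theorem \ref{thm:odd-incomparable-overk}, which identifies Lemma \ref{lem:Maynard} as a sub-case of Lemma \ref{lem:ZT} with $\gamma(\Q,\mathbb{Z}/2\mathbb{Z}) = 1/2 - \epsilon$ and $\beta(\Q,\mathbb{Z}/2\mathbb{Z}) = 8$. Since $\Delta(\ell,2)$ is slightly less than $1/(2\ell)$, the maximum $\max\{\beta,\gamma + \Delta(\ell,2)\}$ is $\beta = 8$, so $\eta_0 = 8/\Delta(\ell,2)$; taking $\Delta(\ell,2) = 1/(2\ell)$ in the limit gives $\eta_0 = 16\ell$, and substitution yields $\delta_\Q(\ell,2) = (1/(2\ell))/(2(16\ell+1)) = 1/(4\ell(16\ell+1)) = 1/(64\ell^2 + 4\ell)$.

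There is essentially no real obstacle, since the proof is a bookkeeping combination of two already-proved analyses. The only reason $p = 2$ needs its own subsection is that the sharper route over $\Q$ via Brun-Titchmarsh (Theorem \ref{thm:odd-incomparable-overQ}) used the positivity of $(p-2)\epsilon_0/(2+p\epsilon_0)$, which fails at $p = 2$. Consequently the $\Q$-saving we obtain here is the same as the general-$k$ saving applied with Maynard's lower bound, rather than anything sharper; the improvement specific to $p = 2$ over $\Q$ is deferred to the rank-$3$ argument of Theorem \ref{thm:even-rank3-incomparable} and the induction that follows.
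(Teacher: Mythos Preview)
Your proposal is correct and follows essentially the same route as the paper: the paper explicitly recalls Theorem \ref{thm:odd-comparable} and Theorem \ref{thm:odd-incomparable-overk} (noting both hold for all $p$), combines them exactly as you describe, and then specializes to $k=\Q$ via Maynard's values $\beta=8$, $\gamma=1/2-\epsilon$ to obtain $\eta_0 = 8/\Delta(\ell,2)$ and the explicit constant $1/(64\ell^2+4\ell)$. Your added remarks on monotonicity, the $\epsilon_0\to 0$ passage, and the failure of Theorem \ref{thm:odd-incomparable-overQ} at $p=2$ are exactly the reasoning the paper uses or leaves implicit in the surrounding discussion.
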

\begin{proof}
	If $k = \Q$, by Lemma \ref{lem:Maynard}, we can take $\beta(\Q, \mathbb{Z}/2\mathbb{Z}) = 8$ and $\gamma(\Q, \mathbb{Z}/2\mathbb{Z}) = 1/2-\epsilon$. Then $\eta_0(\ell, 2)_{\Q} = \frac{8}{\Delta(\ell, 2)}$. By comparing $\frac{\Delta(\ell, 2)}{p(\eta_0+1)}$ and $\frac{(\Delta(\ell,2)-\gamma(\Q, \mathbb{Z}/2\mathbb{Z}) /\eta_0) \eta_0}{p(\eta_0+1)}$, we see that a universal saving is 
	$$\delta_{\Q}(\ell,2) = \frac{\Delta(\ell, 2)}{p(\eta_0+1)} = \frac{1}{64\ell^2+4\ell}.$$
	
	Similarly, we have
	$$\delta_{k}(\ell,2) = \frac{\Delta(\ell, 2)}{p(\eta_0+1)},$$
	where $\eta_0 = \max\{ \beta(k, \mathbb{Z}/2\mathbb{Z}), \gamma(k, \mathbb{Z}/2\mathbb{Z})+\Delta(\ell,2) \}/\Delta(\ell, 2)$. 
\end{proof}

\subsection{Comparable Size for Rank $3$}\label{ssec:even-rank3-comparable}
In this section, we work with $A =  (\mathbb{Z}/2\mathbb{Z})^r$ with $r>2$. In section \ref{ssec:even-rank3-comparable}, \ref{ssec:even-rank3-incomparable} and \ref{ssec:even-rank3-results}, we focus on the case $r=3$ over $\Q$. In section \ref{ssec:even-induction}, we apply the result we obtained for $r=3$ to obtain results for $r>3$. The main reason that we can get a better saving here for $r=3$ over $\Q$ than $r=2$ is that we can apply the Lemma \ref{lem:B-T} for the incomparable case of $A =  (\mathbb{Z}/2\mathbb{Z})^3$ instead of Lemma \ref{lem:Maynard}. 

We introduce the notation for the current section and section \ref{ssec:even-rank3-incomparable}. For $A = (\mathbb{Z}/2\mathbb{Z})^3$, there are $7$ index-$2$ subgroups and $7$ index-$4$ subgroups. For an $A$-extension $L/\Q$, we denote $M_1$ to be the quartic subfield with smallest discriminant, and $K_m$ to be the smallest quadratic field outside $M_1$. 
Denote $K_i$ for $i= 1,2,3$ to be subfields of $M_1$ ordered by $\Disc(K_i)$. Denote $K'_i$ to be the other quadratic subfield of the compositum $K_mK_i$. So we always have $\Disc(K'_i) \ge \Disc(K_m)$. 
In this section and section \ref{ssec:even-rank3-incomparable} and \ref{ssec:even-rank3-results}, we will denote 
\begin{equation}\label{def:eta-rank3}
\eta= \eta(L/k):= \frac{\ln \Disc(K_m)}{\ln \Disc(M_1)},\quad\quad \eta_0 = \frac{1}{\Delta(\ell,2)}.
\end{equation}
See Theorem \ref{thm:even-rank3-incomparable} for the reason on the choice of $\eta_0. $ We will use $\delta'_c(\eta,\ell)$ and $\delta'_{ic}(\eta, \ell)$ to denote the savings in section \ref{ssec:even-rank3-comparable} and \ref{ssec:even-rank3-incomparable} to distinguish from $\delta_c(\eta, \ell, 2)$ and $\delta_{ic}(\eta,\ell,2)$ used in section \ref{ssec:even-rank2}. 

\begin{theorem}\label{thm:even-rank3-comparable}
		Given $A = (\mathbb{Z}/2\mathbb{Z})^3$ and an odd integer $\ell>1$. For any $A$-extension $L/\Q$ with $\eta(L/\Q) \le \eta_0(1+\epsilon_0) = \frac{1+\epsilon_0}{\Delta(\ell,2)}$, we have 
		$$ |\Cl_{L}[\ell]| \le O_{\epsilon, \epsilon_0}(\Disc(L)^{1/2 - \delta+\epsilon}),$$
		for $$\delta = \delta'_c(\eta,\ell) =  \frac{\Delta(\ell, 4)}{4\eta+2}>0,$$ where $\eta = \frac{\ln \Disc(K_m)}{\ln \Disc(M_1)}$. 
\end{theorem}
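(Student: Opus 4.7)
The plan is to adapt the strategy of Theorem \ref{thm:odd-comparable}, but to carry out the dichotomy at the quartic level rather than at the quadratic level: we test whether $M_1$ is $\Delta(\ell,4)$-good or $\Delta(\ell,4)$-bad with respect to a fixed absolute constant $c<1/7$. This lets us invoke Lemma \ref{lem:EVW} on the quartic $M_1$ (note $\Delta(\ell,4)<1/(2\ell(4-1))$), producing a saving of order $\Delta(\ell,4)$ instead of $\Delta(\ell,2)$. The assumption $\eta \le \eta_0(1+\epsilon_0)=(1+\epsilon_0)/\Delta(\ell,2)$, combined with Lemma \ref{lem:rank3-disc-bound}, yields $\Disc(M_1) \ge \Disc(L)^{1/(4\eta_0(1+\epsilon_0)+2)}$, so after discarding finitely many $L$ we may assume both $\Disc(L)$ and $\Disc(M_1)$ are as large as needed to absorb $O(\Disc(L)^{\epsilon})$ error terms.

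In the good case, Lemma \ref{lem:EVW} gives directly $|\Cl_{M_1/\Q}[\ell]| \le O_\epsilon(\Disc(M_1)^{1/2-\Delta(\ell,4)+\epsilon})$. In the bad case, set $Y=\Disc(M_1)^{\Delta(\ell,4)}$. By hypothesis the number of rational primes $p\le Y$ with Frobenius in $H_1 := \Gal(L/M_1)$ is at most $cY/\ln Y$, so after discarding the $O(Y^{\epsilon})$ ramified ones at least $(1-c-\epsilon)Y/\ln Y$ primes have $\sigma_p\in A\setminus H_1$. Each such $\sigma_p$ has order $2$ and generates one of the $6$ order-$2$ subgroups of $A$ distinct from $H_1$; each of these subgroups fixes a distinct quartic subfield $M_j$ with $j\ne 1$, in which the corresponding primes split completely. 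Pigeonholing over these six subfields produces some $M_j$ with $\pi(Y;M_j/\Q,e)\ge \frac{1-c-\epsilon}{6}\cdot\frac{Y}{\ln Y}\ge c\frac{Y}{\ln Y}$, where the last inequality uses $c<1/7$. Defining $\theta_j$ by $\Disc(M_j)^{\theta_j}=Y$, the minimality of $M_1$ gives $\theta_j\le \Delta(\ell,4)$, and Lemma \ref{lem:EVW} yields $|\Cl_{M_j/\Q}[\ell]| \le O_\epsilon(\Disc(M_j)^{1/2+\epsilon})\cdot \Disc(M_1)^{-\Delta(\ell,4)}$.

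In either case, let $M_\star$ denote the quartic with the improved bound ($M_1$ or $M_j$). Since $\ell$ is odd, Lemma \ref{lem:class-grp-decomposition} applies both to $L/\Q$ and to $M_\star/\Q$, and we may factor
$$|\Cl_L[\ell]| \;=\; |\Cl_{M_\star/\Q}[\ell]|\cdot \prod_{K\not\subset M_\star}|\Cl_{K/\Q}[\ell]|,$$
where the second product ranges over the four quadratic subfields of $L$ not contained in $M_\star$. Applying the trivial bound on those four and using $\Disc(L)=\prod_i\Disc(K_i)$ from Lemma \ref{lem:disc-prod}, both cases collapse to the uniform estimate
$$|\Cl_L[\ell]| \;\le\; O_\epsilon\!\bigl(\Disc(L)^{1/2+\epsilon}\bigr)\cdot \Disc(M_1)^{-\Delta(\ell,4)}.$$
Finally, the lower bound $\Disc(M_1) \ge \Disc(L)^{1/(4\eta+2)}$ from Lemma \ref{lem:rank3-disc-bound} produces the claimed $\delta=\Delta(\ell,4)/(4\eta+2)$.

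The main technical point is the pigeonhole step: one must verify that the $6$ order-$2$ elements of $A\setminus H_1$ index $6$ distinct quartic subfields $M_j\neq M_1$, and fix $c<1/7$ at the outset so that the $M_j$ produced actually has enough completely split primes to feed into Lemma \ref{lem:EVW}. The rest is bookkeeping with Lemmas \ref{lem:class-grp-decomposition}, \ref{lem:disc-prod}, \ref{lem:rank3-disc-bound}, and \ref{lem:EVW}, exactly parallel to Theorem \ref{thm:odd-comparable} but shifted up one algebraic layer; the payoff is that the saving is harvested at the quartic level, which will be essential for the induction carried out in section \ref{ssec:even-induction}.
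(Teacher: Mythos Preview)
Your proof is correct and follows the same overall strategy as the paper: dichotomy on whether $M_1$ is $\Delta(\ell,4)$-good with respect to a fixed $c<1/7$, identical treatment of the good case, and a pigeonhole argument in the bad case, all assembled via Lemmas~\ref{lem:class-grp-decomposition}, \ref{lem:disc-prod}, \ref{lem:rank3-disc-bound}, and \ref{lem:EVW}.

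The one genuine difference is the pigeonhole target in the bad case. The paper observes that a prime with $\Frob_p\in A\setminus H_1$ splits completely in exactly two of the four quadratic fields $\{K_m,K'_1,K'_2,K'_3\}$ outside $M_1$, and pigeonholes over the $\binom{4}{2}=6$ pairs to find \emph{two} $\theta_j$-good quadratics, yielding a saving $2\Delta(\ell,4)/(4\eta+2)$ in that branch. You instead pigeonhole over the six quartic subfields $M_j\neq M_1$ (each such prime splits completely in exactly one of them), obtaining a single $\theta_j$-good quartic and a saving $\Delta(\ell,4)/(4\eta+2)$. Since the good case is the bottleneck either way, both routes give the stated $\delta'_c$. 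Your variant is slightly tidier: the inequality $\theta_j\le\Delta(\ell,4)$ needed for Lemma~\ref{lem:EVW} follows immediately from the minimality of $\Disc(M_1)$ among quartics, whereas the paper must control $\theta_j$ against the quadratic threshold $\Delta(\ell,2)$. The paper's variant, on the other hand, squeezes out a factor of $2$ in the bad branch that is simply discarded when taking the minimum.
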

\begin{proof}
	The proof is similar with that of Theorem \ref{thm:odd-comparable}. We separate the discussion for $M_1$ being $\Delta(\ell, 4)$-bad or not with respect to $c$ where $c$ is a fixed small number satisfying $c<1/7$. We fix $c$ once and for all for the current theorem. By Lemma \ref{lem:rank3-disc-bound}, we have $\Disc(M_1)\ge \Disc(L)^{1/(4\eta+2)} \ge \Disc(L)^{1/(4\eta_0(1+\epsilon_0)+2)}$.
	So for a fixed $L_0>0$, there are finitely many $L/Q$ with $\Disc(L) \ge L_0$, and thus finitely many $\Disc(M_1)\ge M_0 = L_0^{1/(4\eta_0(1+\epsilon_0)+2)}$ with $\eta(L/\Q)\le \eta_0(1+\epsilon_0)$. So we can assume both $M_1$ and $L$ are sufficiently large.
	
	If $M_1$ is $\Delta(\ell,4)$-good, then by Lemme \ref{lem:class-grp-decomposition} and Lemma \ref{lem:rank3-disc-bound}, we have
	\begin{equation}
	\begin{aligned}
	|\Cl_L[\ell]| &=| \Cl_{M_1}[\ell]| \prod_{K_i\not\subset M_1 }|\Cl_{K_i}[\ell]| \le O_{\epsilon}(\Disc(L)^{1/2-\Delta(\ell,4)/(4\eta+2)+\epsilon}).\\
	\end{aligned}
	\end{equation}

	    If $M_1$ is $\Delta(\ell,4)$-bad, then we have for $x = \Disc(M_1)^{\Delta(\ell,4)}$ that 
			$$\pi (x; M_1, \hat{e}) \ge (1-c-\epsilon)\cdot \frac{x}{\ln x},$$
		when $x \ge x_0(\epsilon, \epsilon_0)$ is sufficiently large with $x_0$ depending at most on $\epsilon$ and $\epsilon_0$. These primes are inert in $M_1/k$, so will always split at exactly $2$ of $\{ K_m, K'_1, K'_2, K'_3\}$ not contained in $M_1$. Denote 
	    $$\theta_i = \frac{\Delta(\ell, 4)\ln \Disc(M_1)}{\ln \Disc(K'_i)},\quad i= 1,2,3,\quad \theta_m = \frac{\Delta(\ell, 4)\ln \Disc(M_1)}{\ln \Disc(K_m)},$$	    
	    for $K'_i$ ($i=1,2,3$) and $K_m$ respectively. By pigeon hole principle, we get at least $\frac{1-c}{{4\choose 2}} \frac{x}{\ln x}$ many primes that are all split in two of $S$. Since $c<1/7$, we get at least two of $K_i$ of $S$ that are $\theta_i$-good. Denote them by $K_j$ for $j\in J$. Therefore when $\Disc(L/k) \ge L_0(\epsilon,\epsilon_0) = x_0(\epsilon, \epsilon_0)^{ (4\eta_0(1+\epsilon_0) +2)/\Delta(\ell,2)}$, we always get for two $K_j$ that
	    	$$|\Cl_{K_j}[\ell]|\le O_{\epsilon}(\Disc(K_j)^{1/2-\theta_j+\epsilon}),$$
	    and it follows that for every $L$ we get
	   \begin{equation}
	   |\Cl_{L}[\ell]| = \prod_{i\notin J} |\Cl_{K_i}[\ell]| \prod_{j\in J} |\Cl_{K_j}[\ell]|\le O_{\epsilon,\epsilon_0}( \Disc(L)^{1/2 - 2\Delta(\ell,4)/(4\eta+2) + \epsilon}),
	    \end{equation}
	    where the last inequality follows from Lemma \ref{lem:rank3-disc-bound}. 	
		Therefore we can always get a saving with
		$$\delta'_c(\eta, \ell) = \frac{\Delta(\ell, 4)}{4\eta +2}.$$
\end{proof}

\subsection{Incomparable Size for Rank $3$}\label{ssec:even-rank3-incomparable}
In this section, we will treat the case when $A =( \mathbb{Z}/2\mathbb{Z})^3$ and the base field is $\Q$, and $\eta(L/\Q)$ is large. 
\begin{theorem}\label{thm:even-rank3-incomparable}
		Given $A =( \mathbb{Z}/2\mathbb{Z})^3$ and an odd integer $\ell>1$. For any $A$-extension $L/k$, if $\eta > \eta_0 (1+\epsilon_0) =  \frac{1+\epsilon_0}{\Delta(\ell, 2)}$, then 
		$$ |\Cl_L[\ell]| \le O_{\epsilon, \epsilon_0}(\Disc(L)^{1/2 - \delta +\epsilon}),$$
		for 
		$$\delta = \delta'_{ic}(\eta, \ell) = \frac{\Delta(\ell,2)\eta}{2\eta+1} >0.$$
\end{theorem}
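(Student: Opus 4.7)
The plan is to adapt the strategy of Theorem \ref{thm:odd-incomparable-overQ}: in the incomparable regime, apply Brun-Titchmarsh (Lemma \ref{lem:B-T}) to the small-discriminant quartic $M_1$ to upper-bound its density of split primes, and then derive a positive-density lower bound for the complement. The key structural observation is that $M_1/\Q$ has Galois group $(\Z/2\Z)^2$ of order $4$, so primes splitting completely in $M_1$ have density $1/4$; Lemma \ref{lem:B-T}, whose upper bound carries a factor-of-two slack, still produces a bound with leading constant $1/2$, strictly below the trivial $1$. This is precisely what makes the analogous rank-$2$ strategy fail for $p=2$ (where the relevant field is a quadratic with density $1/2$, forcing the use of Lemma \ref{lem:Maynard}) but succeed in rank $3$.

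Concretely, I set $Y:=\Disc(K_m)^{\Delta(\ell,2)}$. The hypothesis $\eta>\eta_0(1+\epsilon_0)$ together with $\Cond(M_1)\le\Disc(M_1)=\Disc(K_m)^{1/\eta}$ gives $\ln\Cond(M_1)/\ln Y\le 1/(\Delta(\ell,2)\eta)<1/(1+\epsilon_0)$. Summing Lemma \ref{lem:B-T} over the $\phi(\Cond(M_1))/4$ residue classes modulo $\Cond(M_1)$ that correspond, by class field theory applied to the abelian $M_1/\Q$, to primes splitting completely in $M_1$, subtracting from $\pi(Y)\sim Y/\ln Y$, and absorbing the ramified primes into $O_\epsilon(Y^\epsilon)$, I expect to obtain, for $\Disc(L)$ sufficiently large,
\begin{equation*}
\pi(Y;M_1,\hat e)\ge\Bigl(1-\tfrac{1/2}{1-\ln\Cond(M_1)/\ln Y}\Bigr)\frac{Y}{\ln Y}-O_\epsilon(Y^\epsilon)\ge c\,\frac{Y}{\ln Y}
\end{equation*}
for some constant $c>0$ depending on $\epsilon_0$. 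Each such prime has Frobenius $\sigma\in A\setminus L_{M_1}$ of order $2$ and, by the same structural observation used in the proof of Theorem \ref{thm:even-rank3-comparable}, splits in exactly two of the four quadratic subfields $\{K_m,K'_1,K'_2,K'_3\}$ lying outside $M_1$. Pigeon-holing over the $\binom{4}{2}=6$ unordered pairs, with $c$ chosen so that $c/6$ exceeds the threshold in Definition \ref{bad}, produces a pair $\{K,K'\}\subset\{K_m,K'_1,K'_2,K'_3\}$ in which both fields are $\theta$-good with exponents $\theta_K:=\Delta(\ell,2)\ln\Disc(K_m)/\ln\Disc(K)\le\Delta(\ell,2)$ (and likewise $\theta_{K'}$), the inequality following from the minimality of $K_m$ among quadratics outside $M_1$.

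Applying Lemma \ref{lem:EVW} to each of $K$ and $K'$ saves a factor of $\Disc(K_m)^{\Delta(\ell,2)}$ per field, so combining Lemma \ref{lem:class-grp-decomposition} (with the trivial bound on the remaining five quadratic subfields) and Lemma \ref{lem:disc-prod} yields
\begin{equation*}
|\Cl_L[\ell]|\le O_{\epsilon,\epsilon_0}\bigl(\Disc(L)^{1/2+\epsilon}\cdot\Disc(K_m)^{-2\Delta(\ell,2)}\bigr).
\end{equation*}
Finally, Lemma \ref{lem:rank3-disc-bound} gives $\Disc(K_m)^{2\Delta(\ell,2)}\ge\Disc(L)^{2\Delta(\ell,2)\eta/(4\eta+2)}=\Disc(L)^{\delta'_{ic}(\eta,\ell)}$, which establishes the claimed bound. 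The main obstacle is the first step: because Lemma \ref{lem:B-T} is tight against the trivial $\pi(Y)$ for the density $1/4$ of primes splitting in a biquadratic, extracting a strictly positive lower bound on $\pi(Y;M_1,\hat e)$ requires $\ln\Cond(M_1)/\ln Y$ to be bounded away from $1/2$ rather than merely from $1$; this bookkeeping is what ultimately dictates the value of $\eta_0$ in (\ref{def:eta-rank3}) and controls the admissible choices of $\epsilon_0$.
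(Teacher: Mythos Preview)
Your overall architecture matches the paper's proof exactly: apply Brun--Titchmarsh to $M_1$, obtain a positive proportion of primes not splitting in $M_1$, pigeonhole over the $\binom{4}{2}$ pairs among $\{K_m,K'_1,K'_2,K'_3\}$ to find two $\theta$-good quadratics, and then invoke Lemmas \ref{lem:EVW}, \ref{lem:class-grp-decomposition}, \ref{lem:disc-prod}, \ref{lem:rank3-disc-bound}. The pigeonhole and the final assembly are fine.

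There is, however, a genuine quantitative gap in the first step. You take $q=\Cond(M_1)$ and bound it by $\Cond(M_1)\le\Disc(M_1)$, obtaining
\[
\frac{\ln\Cond(M_1)}{\ln Y}\le\frac{1}{\Delta(\ell,2)\,\eta}<\frac{1}{1+\epsilon_0}.
\]
But for your displayed inequality to produce a positive constant you need $\ln\Cond(M_1)/\ln Y<1/2$, and $1/(1+\epsilon_0)$ is close to $1$ for small $\epsilon_0$. With only the bound $\Cond(M_1)\le\Disc(M_1)$ your argument would work only for $\eta>2\eta_0$, not for $\eta>\eta_0(1+\epsilon_0)$; in particular the value $\eta_0=1/\Delta(\ell,2)$ in (\ref{def:eta-rank3}) would \emph{not} be justified by your bookkeeping as written. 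You correctly flag at the end that $\ln\Cond(M_1)/\ln Y$ must be bounded away from $1/2$, but the bound you actually derived does not accomplish this.

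The missing ingredient is the sharper relation $\Cond(M_1)\asymp\Disc(M_1)^{1/2}$, which the paper uses. This follows from the conductor--discriminant formula for the biquadratic $M_1/\Q$: each odd ramified prime ramifies in exactly two of the three quadratic subfields, so contributes $p^2$ to $\Disc(M_1)$ but only $p^1$ to $\Cond(M_1)$; the implied constants absorb the prime $2$. With this in hand one gets
\[
\frac{\ln\Cond(M_1)}{\ln Y}\le\frac{1}{2\Delta(\ell,2)\,\eta}+o(1)<\frac{1}{2(1+\epsilon_0)}<\frac{1}{2},
\]
so that $1-\tfrac{1/2}{1-\ln q/\ln Y}>\tfrac{\epsilon_0}{1+2\epsilon_0}$, and one may take $c<\tfrac{\epsilon_0}{6(1+2\epsilon_0)}$ exactly as in the paper. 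This factor of $2$ in the exponent of $\Disc(M_1)$ is precisely what pins down $\eta_0=1/\Delta(\ell,2)$.
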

\begin{proof}
	Similarly with the proof of Theorem \ref{thm:odd-incomparable-overQ}, by Lemma \ref{lem:rank3-disc-bound}, we can assume both $L$ and $K_m$ are sufficiently large.
	
	We will show that at least $2$ of quadratic fields $K_i$ in $\{ K_m, K'_1, K'_2, K'_3 \}$ are $\theta_i$ good for 
	  $$\theta_i = \frac{\Delta(\ell, 2)\ln \Disc(K_m)}{\ln \Disc(K'_i)}, \quad i= 1,2,3,\quad \theta_m = \Delta(\ell, 2),$$
	 with respect to $c$ where $c$ is a small number satisfying $c< \frac{\epsilon_0}{6(1+2\epsilon_0)}$. We will fix $c = c(\epsilon_0)$ once and for all for the current theorem.
	
	We apply Lemma \ref{lem:B-T} with 
		$$x = \Disc(K_m)^{\Delta(\ell,2)}, \quad \quad q = \Cond(M_1)\asymp  \Disc(M_1)^{1/2},$$
	to count the number of primes in $\Q$ that split in $M_1/\Q$. By class field theory, this is equivalent to take $\frac{\phi(q)}{p}$ residue classes $a (\text{mod } q)$ and then add up over $a$, and we get
	$$\pi(x; M_1/\Q, e) \le \frac{2}{1-\ln q/\ln x}\cdot \frac{x}{4 \ln x} = \frac{2}{1- 1/2\Delta(\ell,2)\eta} \cdot \frac{x}{4 \ln x}. $$
	So we get a positive density $C$ of primes that are inert in $M_1/\Q$
	\begin{equation}
		\pi(x; M_1/\Q, \hat{e}) \ge (1-\frac{1}{2- 1/\Delta(\ell,2)\eta} -\epsilon) \frac{x}{\ln x}  = C\frac{x}{\ln x},
	\end{equation}
	when $x \ge x_0(\epsilon, \epsilon_0)$ is sufficiently large. Primes that are inertia in $M_1$ must be split in exactly two of $K_j$ in $\{ K_m, K'_1, K'_2, K'_3 \}$. Therefore by pigeon hole principle, there exist at least two such $K_j$ satisfy 
	\begin{equation}
	\pi(x; K_j, e) \ge \frac{C}{{4\choose 2}} \cdot \frac{x}{\ln x}\ge c\frac{x}{\ln x},
	\end{equation}
	which implies that $K_j$ is $\theta_j$-good. The second inequality comes from $\eta > \eta_0(1+\epsilon_0)$ and the assumption on $c$. Then by Lemma \ref{lem:EVW}, we get
		$$|\Cl_{K_j}[\ell]| \le O_{\epsilon,\epsilon_0}(\Disc(K_j)^{1/2-\theta_j+\epsilon}).$$
	By Lemma \ref{lem:disc-prod} and \ref{lem:class-grp-decomposition} and Lemma \ref{lem:rank3-disc-bound}, we have for every $L$ that
		\begin{equation}
		\begin{aligned}
		|\Cl_L[\ell]|  \le O_{\epsilon,\epsilon_0}(\Disc(L)^{1/2- 2\Delta(\ell,2)\eta/(4\eta+2)+\epsilon }).\\
		\end{aligned}
		\end{equation}
	So we prove this theorem with 
		$$\delta'_{ic}(\eta, \ell) = \frac{\Delta(\ell,2)\eta}{(2\eta+1)}.$$
\end{proof}

\subsection{Savings for Even $A$ with Rank $3$}\label{ssec:even-rank3-results}
Finally combining Theorem \ref{thm:even-rank3-comparable} and \ref{thm:even-rank3-incomparable}, we get the following theorem. 
\begin{theorem}\label{thm:even-rank3-results-overQ}
		Given $A =( \mathbb{Z}/2\mathbb{Z})^3$ and an odd prime integer $\ell$. For any $A$-extension $L/\Q$, we have
		$$ |\Cl_L[\ell]| \le O_{\epsilon}(\Disc(L)^{1/2 - \delta +\epsilon})$$
		for some 
		$$\delta = \delta'_c(\eta_0, \ell) = \frac{\Delta(\ell,4)}{4\eta_0+2},$$
		where $\eta_0 = \frac{1}{\Delta(\ell,2)}$. 
\end{theorem}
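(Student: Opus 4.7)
The plan is to patch together Theorems \ref{thm:even-rank3-comparable} and \ref{thm:even-rank3-incomparable} into a bound that is uniform in $\eta = \eta(L/\Q)$, mirroring the derivation of Theorem \ref{thm:odd-rank2-final-Q} from Theorems \ref{thm:odd-comparable} and \ref{thm:odd-incomparable-overQ}. Fix a small auxiliary parameter $\epsilon_0 > 0$. For any $A$-extension $L/\Q$, if $\eta \le \eta_0(1+\epsilon_0)$ then Theorem \ref{thm:even-rank3-comparable} gives the desired bound with saving $\delta'_c(\eta, \ell) = \Delta(\ell,4)/(4\eta+2)$, while if $\eta > \eta_0(1+\epsilon_0)$ then Theorem \ref{thm:even-rank3-incomparable} gives it with saving $\delta'_{ic}(\eta, \ell) = \Delta(\ell,2)\eta/(2\eta+1)$. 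A universal saving that works for every $L/\Q$ is therefore the infimum over $\eta$ of these two expressions, each on its respective range.

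Since $\delta'_c(\eta, \ell)$ is strictly decreasing in $\eta$ and $\delta'_{ic}(\eta, \ell)$ is strictly increasing in $\eta$, this infimum is attained at the cutoff and equals
$$\min\bigl(\delta'_c(\eta_0(1+\epsilon_0), \ell),\ \delta'_{ic}(\eta_0(1+\epsilon_0), \ell)\bigr).$$
Taking $\epsilon_0 \to 0$ and evaluating at $\eta_0 = 1/\Delta(\ell,2)$, a short computation gives
$$\delta'_c(\eta_0, \ell) = \frac{\Delta(\ell,4)\Delta(\ell,2)}{4+2\Delta(\ell,2)}, \qquad \delta'_{ic}(\eta_0, \ell) = \frac{\Delta(\ell,2)}{2+\Delta(\ell,2)}.$$
Comparing, $\delta'_c(\eta_0, \ell) \le \delta'_{ic}(\eta_0, \ell)$ is equivalent to $\Delta(\ell,4) \le 2$, which holds trivially because by Remark \ref{rmk:Delta} we have $\Delta(\ell,4) < 1/(6\ell) < 1$. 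Hence the binding saving is $\delta'_c(\eta_0, \ell) = \Delta(\ell,4)/(4\eta_0+2)$, matching the statement of the theorem. The choice $\eta_0 = 1/\Delta(\ell,2)$ made in Theorem \ref{thm:even-rank3-incomparable} is exactly what ensures that the crossing between the two savings lands in this regime.

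The only technical nuisance is the passage from the $\epsilon_0$-dependent bounds to the clean $\epsilon_0$-free statement; I would handle this exactly as in the proof of Theorem \ref{thm:odd-rank2-final-Q}. Namely, for every fixed $\epsilon_0 > 0$ we obtain the bound with saving $\delta'_c(\eta_0(1+\epsilon_0),\ell)$ and implied constant $O_{\epsilon,\epsilon_0}$; since $\delta'_c$ is continuous in its first argument and $\epsilon$ in the exponent can be chosen independently, the $\epsilon_0$ perturbation can be absorbed into the $\Disc(L)^{\epsilon}$ factor by adjusting the implied constant, yielding the claimed bound with saving $\delta = \delta'_c(\eta_0, \ell)$. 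I do not anticipate any genuine obstacle here: the hard analytic and algebraic work has already been done in Theorems \ref{thm:even-rank3-comparable} and \ref{thm:even-rank3-incomparable}, and the present theorem is essentially a bookkeeping step selecting the worst case.
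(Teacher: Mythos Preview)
Your proposal is correct and follows essentially the same approach as the paper: combine Theorems~\ref{thm:even-rank3-comparable} and~\ref{thm:even-rank3-incomparable}, use the monotonicity of $\delta'_c$ and $\delta'_{ic}$ in $\eta$, compare their values at the cutoff $\eta_0 = 1/\Delta(\ell,2)$, and absorb $\epsilon_0$ into the $\epsilon$ exactly as in Theorem~\ref{thm:odd-rank2-final-Q}. The only cosmetic difference is that the paper plugs in the numerical values $\delta'_c(\eta_0,\ell)=\frac{1}{48\ell^2+12\ell}$ and $\delta'_{ic}(\eta_0,\ell)=\frac{1}{4\ell+1}$ to do the comparison, whereas you keep things symbolic in $\Delta(\ell,2)$ and $\Delta(\ell,4)$.
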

\begin{proof}
	Similarly with Theorem \ref{thm:odd-rank2-final-Q} we can take $\epsilon_0$ arbitrarily small. Notice that $\delta'_c(\eta, \ell)$ decreases as $\eta$ increases and $\delta'_{ic}(\eta, \ell)$ increases as $\eta$ increases. We compare
	$$\delta'_c(\eta_0, \ell)= \frac{1}{48\ell^2+12\ell}, \quad\quad \delta'_{ic}(\eta_0, \ell)= \frac{1}{4\ell+1}.$$
	So the worst point in all range of $\eta$ is the exactly at $\eta= \eta_0$. We can pick $\delta = \frac{\Delta(\ell,4)}{4\eta_0 +2} =  \frac{1}{48\ell^2+12\ell}$.
\end{proof}
\begin{remark}\label{rmk:even-rank3-compare}
	Comparing the saving we get in Theorem \ref{thm:even-rank2-results-overk} and \ref{thm:even-rank3-results-overQ}, here we get an improvement over $\Q$, i.e., 
	$$ \frac{1}{48\ell^2+12\ell} > \frac{1}{64\ell^2+4\ell}$$
	for arbitrary $\ell>1$. 
\end{remark}

\subsection{Induction}\label{ssec:even-induction}
In this section, we will derive $\ell$-torsion bound for every $A = (\mathbb{Z}/2\mathbb{Z})^r$ with $r>2$. Following the Remark \ref{rmk:even-rank3-compare}, we will use Theorem \ref{thm:even-rank3-results-overQ} to prove a point-wise saving for elementary $2$-abelian group with rank greater than $3$. 
\begin{theorem}[Even Exponent, Over $\Q$]\label{thm:even-induction-overQ}
	Given $A = (\mathbb{Z}/2\mathbb{Z})^r$ with $r>2$ and an arbitrary integer $\ell = \ell_{(2)} \ell_{2}>1$. For any $A$-extension $L/\Q$, we have the pointwise bound
	$$ |\Cl_L[\ell]| \le O_{\epsilon}(\Disc(L/k)^{1/2 - \delta(\ell_{(2)})+\epsilon}),$$
	for $\delta(\ell) = \frac{1}{48\ell^2+12\ell}$. 
\end{theorem}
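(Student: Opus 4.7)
My plan parallels Theorem~\ref{thm:odd-final-results-overk}: I will use Theorem~\ref{thm:even-rank3-results-overQ} as the base case (rank $3$) instead of a rank-$2$ statement, and propagate the saving to all $r>3$ via Corollary~\ref{coro:class-grp-second-layer-decomposition} and~(\ref{eqn:disc-prod-second-layer}). The choice of rank $3$ as the base is precisely what gives the stated exponent $\delta(\ell)=1/(48\ell^2+12\ell)$, as noted in Remark~\ref{rmk:even-rank3-compare}.

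First I would reduce to odd $\ell$. Writing $\ell=\ell_{(2)}\ell_2$, we have $|\Cl_L[\ell]|=|\Cl_L[\ell_{(2)}]|\cdot|\Cl_L[\ell_2]|$, and the $2$-part is bounded by $O_\epsilon(\Disc(L)^\epsilon)$ via genus theory (Theorem~$3$ of~\cite{Cor83}). So it suffices to prove the bound when $(\ell,2)=1$. The case $r=3$ is then exactly Theorem~\ref{thm:even-rank3-results-overQ}, so I assume $r>3$.

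Now, with $(\ell,|A|)=(\ell,2^r)=1$, Corollary~\ref{coro:class-grp-second-layer-decomposition} (applied with $t=3$) together with the companion identity~(\ref{eqn:disc-prod-second-layer}) yields
$$|\Cl_{L/\Q}[\ell]|=\prod_{F_s/\Q}|\Cl_{F_s/\Q}[\ell]|^{1/(2^{r-2}-1)},\qquad \Disc(L/\Q)=\prod_{F_s/\Q}\Disc(F_s/\Q)^{1/(2^{r-2}-1)},$$
where $F_s/\Q$ ranges over the degree-$8$ sub-extensions of $L$, each of which has Galois group $(\mathbb{Z}/2\mathbb{Z})^3$ over $\Q$. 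I would then apply Theorem~\ref{thm:even-rank3-results-overQ} to each $F_s/\Q$ (using Remark~\ref{rmk:abs-to-rel} to switch between absolute and relative class groups and discriminants) to obtain $|\Cl_{F_s/\Q}[\ell]|\le O_\epsilon(\Disc(F_s/\Q)^{1/2-\delta(\ell)+\epsilon})$. Taking the $1/(2^{r-2}-1)$-power product and combining with the discriminant identity produces $|\Cl_{L/\Q}[\ell]|\le O_\epsilon(\Disc(L/\Q)^{1/2-\delta(\ell)+\epsilon})$, and a final invocation of Remark~\ref{rmk:abs-to-rel} converts this to the absolute statement.

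There is no serious obstacle once Theorem~\ref{thm:even-rank3-results-overQ} is in hand: the induction is pure bookkeeping and formally mirrors the odd-$p$ induction of Theorem~\ref{thm:odd-final-results-overk}. The one point worth flagging is that the saving $\delta(\ell)$ must be uniform in $r$, which is automatic here because the class-group and discriminant exponents in the product formulas coincide exactly; substituting the rank-$3$ base case for the rank-$2$ base case used in the odd setting is the only substantive change.
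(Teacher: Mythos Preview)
Your proposal is correct and follows essentially the same route as the paper: reduce to odd $\ell$ via genus theory, then use Corollary~\ref{coro:class-grp-second-layer-decomposition} and~(\ref{eqn:disc-prod-second-layer}) at the level of the degree-$8$ subfields together with Theorem~\ref{thm:even-rank3-results-overQ}. Your exponent $1/(2^{r-2}-1)$ is in fact the correct general form (the paper writes $1/7$, which is the value only for $r=5$), and over $\Q$ the absolute/relative conversion via Remark~\ref{rmk:abs-to-rel} is of course trivial.
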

\begin{proof}
	By a similar proof of Theorem \ref{thm:odd-final-results-overk},
	\begin{equation}
	|\Cl_L[\ell]| = \prod_s |\Cl_{F_s}[\ell]|^{1/7} \le O_{\epsilon}(\prod_s \Disc(F_s)^{1/2-\delta+\epsilon})^{1/7} \le O_{\epsilon}(\Disc(L)^{1/2-\delta+\epsilon}).
	\end{equation}	
	where $F_s$ ranges over all degree $8$ subfields of $L$. It follows directly from Corollary \ref{coro:class-grp-second-layer-decomposition} and (\ref{eqn:disc-prod-second-layer}). Similarly with Theorem \ref{thm:odd-final-results-overk}, we derive the results for general $\ell$ by $|\Cl_L[\ell]| =|\Cl_L[\ell_{(2)}]|\cdot| \Cl_L[\ell_2]|$.
\end{proof}

\begin{remark}[Even Exponent, $\ell=3$, Over $\Q$]\label{rmk:even-3torsion}
	When $\ell=3$, we can do induction over an even better result from \cite{EV07} that $|\Cl_F[3]|\le O(\Disc(F)^{1/3+\epsilon})$ for any quadratic extension $F/\Q$. From a direct use of Corollary \ref{coro:class-grp-second-layer-decomposition} and (\ref{eqn:disc-prod-second-layer}), we can take $\delta(3) = 1/3$. 
\end{remark}

When $k\neq \Q$, we use the induction from $r=2$. It follows from a similar proof with Theorem \ref{thm:odd-final-results-overk} directly:
\begin{theorem}[Even Exponent, Over $k$]\label{thm:even-induction-overk}
	Given $A = (\mathbb{Z}/2\mathbb{Z})^r$ with $r\ge 2$ and an integer $\ell>1$. For any $A$-extension $L/k$, we have the pointwise bound
		$$ |\Cl_L[\ell]| \le O_{\epsilon, k}( \Disc(L/k)^{1/2 - \delta_k(\ell_{(2)})+\epsilon}),$$
		for $\delta_k(\ell) = \delta_k(\ell, 2)$ in Theorem \ref{thm:even-rank2-results-overk}. 
\end{theorem}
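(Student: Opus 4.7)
The plan is to mirror the induction already carried out for odd $p$ in Theorem \ref{thm:odd-final-results-overk}, using Theorem \ref{thm:even-rank2-results-overk} as the base case in place of Theorem \ref{thm:odd-rank2-final-k}. First I would reduce to the case where $\ell$ is odd. Writing $\ell = \ell_{(2)}\cdot \ell_2$ with $\ell_2$ a power of $2$, genus theory for $A=(\mathbb{Z}/2\mathbb{Z})^r$ extensions gives $|\Cl_L[\ell_2]|\le O_\epsilon(\Disc(L)^\epsilon)$, and since $|\Cl_L[\ell]|=|\Cl_L[\ell_{(2)}]|\cdot |\Cl_L[\ell_2]|$ it suffices to bound $|\Cl_L[\ell_{(2)}]|$. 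So from now on we may assume $(\ell,2)=1$, which puts us in the hypothesis of Lemma \ref{lem:class-grp-decomposition} and Corollary \ref{coro:class-grp-second-layer-decomposition}.

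For the base case $r=2$, the bound with saving $\delta_k(\ell,2)$ is exactly the content of Theorem \ref{thm:even-rank2-results-overk}, so there is nothing to do. For $r>2$, apply Corollary \ref{coro:class-grp-second-layer-decomposition} at the second layer $t=2$: every quartic subfield $M_j/k$ of $L/k$ has $\Gal(M_j/k)\simeq (\mathbb{Z}/2\mathbb{Z})^2$, and
\[
|\Cl_{L/k}[\ell]|=\prod_{M_j/k}|\Cl_{M_j/k}[\ell]|^{1/(2^{r-1}-1)}.
\]
By the $r=2$ relative bound (obtained from Theorem \ref{thm:even-rank2-results-overk} via (\ref{eqn:abs-to-rel}) of Remark \ref{rmk:abs-to-rel}) we have $|\Cl_{M_j/k}[\ell]|\le O_{\epsilon,k}(\Disc(M_j/k)^{1/2-\delta_k(\ell,2)+\epsilon})$ for each $j$, and combining with the matching discriminant identity (\ref{eqn:disc-prod-second-layer}) yields
\[
|\Cl_{L/k}[\ell]|\le O_{\epsilon,k}\Bigl(\prod_{M_j/k}\Disc(M_j/k)^{1/(2^{r-1}-1)}\Bigr)^{1/2-\delta_k(\ell,2)+\epsilon}=O_{\epsilon,k}(\Disc(L/k)^{1/2-\delta_k(\ell,2)+\epsilon}).
\]
A final application of (\ref{eqn:abs-to-rel}) transitions from the relative bound back to the absolute one $|\Cl_L[\ell]|\le O_{\epsilon,k}(\Disc(L)^{1/2-\delta_k(\ell_{(2)})+\epsilon})$ claimed in the theorem.

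Since all the nontrivial analytic and group-theoretic content has already been absorbed into Theorem \ref{thm:even-rank2-results-overk} and the decomposition formulas of section \ref{sec:algebraic}, there is no real obstacle here; the only thing to double-check is that the exponent $1/(2^{r-1}-1)$ coming from the class-group decomposition matches the exponent appearing in the discriminant decomposition, so that the saving $\delta_k(\ell,2)$ transfers unchanged from $M_j$ to $L$. This is immediate from comparing Corollary \ref{coro:class-grp-second-layer-decomposition} and equation (\ref{eqn:disc-prod-second-layer}) with $p=2,\ t=2$.
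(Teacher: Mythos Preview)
Your proof is correct and follows exactly the approach the paper intends: the paper's own proof of Theorem \ref{thm:even-induction-overk} simply says it follows from a similar proof with Theorem \ref{thm:odd-final-results-overk}, and you have spelled out precisely that argument with $p=2$, using Theorem \ref{thm:even-rank2-results-overk} as the rank-$2$ base case, Corollary \ref{coro:class-grp-second-layer-decomposition} and (\ref{eqn:disc-prod-second-layer}) for the decomposition, and genus theory to handle the $2$-part of $\ell$.
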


\section{Acknowledgement}
The author is supported by Foerster-Bernstein Fellowship at Duke University. I would like to thank J\"urgen Kl\"uners, Weitong Wang and Asif Zaman for providing helpful references. I would like to thank Dimitris Koukoulopoulos, Robert J. Lemke Oliver, Melanie Matchett Wood, Asif Zaman and Ruixiang Zhang for helpful conversations. I would like to thank Jordan Ellenberg, Melanie Matchett Wood and Yongqiang Zhao for suggestions on an earlier draft. 


\newcommand{\etalchar}[1]{$^{#1}$}

\Addresses
\end{document}